\newtheorem{theorem}{Theorem}[section]
\newtheorem{lemma}[theorem]{Lemma}
\newtheorem{prop}[theorem]{Proposition}
\newtheorem{corol}[theorem]{Corollary}
\theoremstyle{definition}
\newtheorem{defin}[theorem]{Definition}
\newtheorem{exam}[theorem]{Example}
\theoremstyle{remark}
\newtheorem{remk}[theorem]{Remark}
\numberwithin{equation}{section}
\def\La{\Lambda}      
       \def\De{\Delta}
\def\Om{\Omega}       \def\th{\theta}
\def\al{\alpha}       
\def\be{\beta}        \def\eps{\varepsilon}
\def\ga{\gamma}       
\def\la{\lambda}      
\def\vi{\varphi}      \def\io{\iota}
\def\si{\sigma}       \def\om{\omega}
 \def\mN{\mathbb N}
 \def\mZ{\mathbb Z}
\def\aK{\mathbbm k}
\def\gC{\mathfrak c} \def\gP{\mathfrak p}
\def\gD{\mathfrak d} 
 \def\gR{\mathfrak r}
\def\gM{\mathfrak m}
 \def\dP{\mathfrak P}
 \def\dS{\mathfrak S}
\def\dI{\mathfrak I}
 \def\kN{\mathcal N}
\def\kC{\mathcal C}
 \def\qR{{\boldsymbol R}}
\def\mt{\mbox{-}}			
\def\*{\otimes}		\def\+{\oplus}		\def\bop{\bigoplus}
\def\sb{\subset}         \def\sp{\supset}
\def\spe{\supseteq}      \def\sbe{\subseteq}
\def\mps{\mapsto}		\def\op{^\mathrm{op}}
\def\ch{^\vee}				\def\b1{\mathbf{1}}
\def\8{\infty}					\def\bap{\bigcap}
\def\xarr{\xrightarrow}		\def\Arr{\Rightarrow}
\def\Equ{\Leftrightarrow}
\def\nea{\nearrow}		\def\sea{\searrow}
\def\niso{\not\simeq}		\def\Sb{\subsetplus}
\def\nSb{\not\hspace*{-1pt}\subsetplus}
\def\ula#1{\underline{#1}}
\def\ola#1{\overline{#1}}
\def\hH{\hat{H}}
\def\ito{\stackrel{\sim}{\to}}
\def\set#1{\left\{#1\right\}}
\def\setsuch#1#2{\left\{#1\mid #2\right\}}
\def\mtr#1{\begin{pmatrix}#1\end{pmatrix}}
\def\smtr#1{\left(\begin{smallmatrix}#1\end{smallmatrix}\right)}
\def\gnr#1{\langle #1 \rangle}
\def\lst#1#2{ #1_1 , #1_2 , \dots , #1_{#2} }
\def\lsto#1#2{ #1_0 , #1_1 , \dots , #1_{#2} }
\def\md{\mbox{-}\mathrm{mod}}
\def\ind{\mbox{-}\mathrm{ind}}
\def\lat{\mathop{\mbox{-}\mathrm{lat}}\nolimits}
\def\id{\mathrm{Id}}
\def\ann{\mathop\mathrm{Ann}\nolimits}
\def\ext{\mathop\mathrm{Ext}\nolimits}
\def\hom{\mathop\mathrm{Hom}\nolimits}
\def\CM{\mathop\mathrm{CM}\nolimits}
\def\idim{\mathop\mathrm{inj.dim}\nolimits}
\def\End{\mathop\mathrm{End}\nolimits}
\def\rad{\mathop\mathrm{rad}\nolimits}
\def\im{\mathop\mathrm{Im}\nolimits}
\def\cok{\mathop\mathrm{Cok}\nolimits}
\def\ker{\mathop\mathrm{Ker}\nolimits}
\def\uhom{\mathop{\underline{\mathrm{Hom}}}\nolimits}
\def\ohom{\mathop{\overline{\mathrm{Hom}}}\nolimits}
\def\Mat{\mathop\mathrm{Mat}\nolimits}
\def\tr{\mathop\mathrm{Tr}\nolimits}
\def\tor{\mathop\mathrm{Tor}\nolimits}
\def\Rad{\mathop\mathrm{Rad}\nolimits}
\def\wh{\mathop\mathrm{wd}\nolimits}
\def\AR{\mathrm{AR}}
\def\aut{\mathop\mathrm{Aut}\nolimits}
\def\cm{Cohen--Macaulay}
\def\iff{if and only if }
\def\feq{The following conditions are equivalent:}
\def\oc{one-to-one correspondence }
\def\fg{finitely generated}
\def\dvr{discrete valuation ring}
\def\asp{almost split}
\def\art{Aus\-lan\-der--Reiten }
\begin{document}
\title{Rejection lemma and almost split sequences}
\author{Yuriy A. Drozd}
\address{Institute of Mathematics, National Academy of Sciences of Ukraine,
Tereschenkivska 3, Kyiv 01001, Ukraine}
\email{y.a.drozd@gmail.com}
\urladdr{http://www.imath.kiev.ua/$\sim$drozd}

\subjclass{16G30,\,16G70,\,16H20}

\keywords{Orders, lattices, \asp\ sequences, \art transpose, Gorenstein orders, bijective lattices, rejection lemma}

\dedicatory{To the memory of Volodya Kirichenko}

\begin{abstract}
We study the behaviour of almost split sequences and \art quivers of an order under rejection of bijective modules as defined in \cite{qb}. In particular,
we establish relations of stable categories and almost split sequences for an order $A$ and the order $A'$ obtained from $A$ by such rejection.
These results are specified for Gorenstein and Frobenius cases.
\end{abstract}

\maketitle

\tableofcontents

\section{Introduction}
 
 Bijective modules and ``rejection lemma'' \cite{qb} play an important role in the theory of orders and lattices, as well as Gorenstein (that is, self-bijective)
 orders (see, for instance, \cite{qb,dk1,hin1,hin2}). On the other hand, now the importance of \asp\ sequences and \art quivers is doubtful. 
 In this paper we consider the behaviour of almost split sequences and \art quivers under rejection
 of bijective modules. Namely, in Section~\ref{du} we recall general facts on orders, lattices and duality. Our considerations are a bit more general, since
 the basic commutative ring is not necessarily discrete valuation ring, though in fact all main results from the ``classical'' theory, as in \cite{cr},
 remain valid. In Section~\ref{bi} we introduce bijective lattices, rejection lemma and Gorenstein orders and establish some basic results about them.
 In particular, we find out which lattices become projective and injective after rejection (Theorem~\ref{bi6}). Section~\ref{ba} is devoted to \emph{Bass
 orders}, i.e. such that all their overrings are Gorenstein. The main result here is Theorem~\ref{ba3}, which is a substantial generalization of the criterion
 for an order to be Bass from \cite{qb}. In Section~\ref{st} we consider stable categories and relate the stable category of an order $A$ to that of the
 order $A'$ obtained by the rejection of a bijective module (Theorem~\ref{st3}). In Section~\ref{ar} we study \asp\ sequences and find out how \asp\
 sequences over $A$ can be described in terms of $A'$-lattices (Proposition~\ref{ar3} and Theorem~\ref{ar4}). Finally, in Section~\ref{go} we specify
 the preceding results for Gorenstein and Frobenius cases.
 
 \medskip\noindent
 \textsf{This paper is devoted to the memory of my friend, colleague and co-author Vladimir Kirichenko.}
 
 \section{Orders, lattices and duality} 
\label{du}
 
 \noindent
  We denote by $mM$ the direct sum of $m$ copies of a module $M$. 
  The formulae $M\sp N$ and $N\sb M$ mean that $N$ is a \emph{proper subset} of $M$.
  
 In what follows $R$ is a \emph{complete local reduced noetherian ring of Krull dimension $1$} with the maximal ideal $\gM$, the residue field
 $\aK=R/\gM$ and the total ring of fractions $K$. It follows from \cite{bh} that the ring $A$ is \cm. We denote by $R\md$ the category
 of \fg\ $R$-modules and by $R\lat$ its full subcategory of \emph{$R$-lattices}, that is of \emph{torsion free} $R$-modules or such modules 
 $M$ that the natural map $M\to K\*_RM$ is an embedding. We write $KM$ instead of $K\*_RM$ and identify $M$ with $1\*M\sbe KM$. 
 In this case $R$-lattices coincide with \emph{maximal \cm\ $R$-modules}. 
 As $R$ is complete, it has a \emph{canonical module} \cite[Corollary~3.3.8]{bh}, that is an $R$-lattice $\om_R$ such that 
 $\idim_R\om_R=1$ and $\ext^1_\qR(\aK,\om_R)=\aK$. The functor $D:M\mps\hom_R(M,\om_R)$ is an \emph{exact duality} 
 on the category $R\lat$ \cite[Theorem~3.3.10]{bh}. It means that if $0\to N\xarr\al M\xarr\be L\to 0$ 
 is an exact sequence of lattices, the sequence $0\to D  L\xarr{D \be} D  M\xarr{D \al} D  N\to0$ 
 is also exact, and the natural map $M\to DDM$ is an isomorphism. As 
 $\End_R (\om_R)\simeq\End_R R\simeq R$ and $\End_KKM\simeq K\End_RM$ for every \cm\ module $M$, 
 we have that $K\om_R\simeq K$ and we identify $\om_R$ with its image in $K$. Note also that $K$ is a direct product of fields 
 $K=\prod_{i=1}^sK_i$, where $K_i$ is the field of fractions of the ring $R/\gP_i$ and $\gP_i$ runs through minimal prime ideals of $R$. 
 
 An \emph{$R$-order} is, by definition, a semiprime $R$-algebra $A$ which is an $R$-lattice. Recall that semiprime 
 means that $A$ has no nilpotent ideals. Then $KA$ is a semisimple $K$-algebra and they say that $A$ is \emph{an $R$-order in $KA$}. 
 We denote by $Z(A)$ the center of $A$ and call
 $A$ \emph{central} if the natural map $R\to Z(A)$ is an isomorphism. If $A$ is \emph{connected}, i.e. does not decomposes as a ring, 
 its center is local and vice versa. We denote by $A\md$ the category of \fg\ $R$-modules and by $A\lat$ its full subcategory of 
 \emph{$A$-lattices}, i.e. (left) $A$-modules which are $R$-lattices. The restriction of the duality functor $D$ onto $A\lat$ gives 
 an exact duality of $A\lat$ onto $A\op\lat$, which we consider as the category of right $A$-lattices. Set $\om_A=\hom_R(A,\om_R)$. 
 It is an $A$-bimodule and, for any $A$-lattice $M$ (left or right), its dual $DM$ is identified with $\hom_A(M,\om_A)$. 
 We say \emph{finite module} instead of \emph{module of finite length} and denote by $\ell_A(M)$ the length of such module.
 We call the \emph{width} of a lattice $M$ and denote it by $\wh_A(M)$ the length $\ell_{KA}(KM)$. One easily sees that $\wh_A(M)$ is the
 maximal integer $m$ such that $M$ contains a direct sum of $m$ nonzero submodules, or, equivalently, contains a chain of submodules
 $M=M_0\sp M_1\sp\dots\sp M_m$ such that $M_i/M_{i+1}$ is a lattice for $0\le i<m$. A lattice $M$ of width $1$ is called
 \emph{L-irreducible}.\!%
 \footnote{\,One often call such a lattice \emph{irreducible}, but we will use this word in another situation.}
 %
    If the ring $R$ is fixed, we often say \emph{order} instead of $R$\emph{-order}.
   
  As the ring $R$ is complete, any finite $R$-algebra (i.e. \fg\ as $R$-module) is semiperfect \cite{lam}. Therefore, the category of
  \fg\ modules over a finite $R$-algebra $A$ is Krull--Schmidt. In particular, any \fg\ projective $A$-module is isomorphic to a direct
  summand of $A$ and there is a \oc\ between indecomposable \fg\ projective $A$-modules (called \emph{principal $A$-modules}) 
  and simple $A$-modules, which maps a principle $A$-module $P$ to $P/\gR P$, where $\gR=\rad A$. For every \fg\ $A$-module $M$
  there is an epimorphism $\pi:P\to M$ with projective $P$ and $\ker\pi\sbe\gR P$. The module $P$ is unique up to isomorphism. It is called
  the \emph{projective cover} of $M$ and denoted by $P_A(M)$. Sometimes the epimorphism $\pi$ is also called a projective cover of $M$,
  though it is only defined up to an automorphism of $P$. Obviously, $\pi$ induces an isomorphism $P/\gR P\ito M/\gR M$.
  
 An \emph{overring} of an $R$-order $A$ is an $R$-order $A'$ such that $A\sb A'\sb KA$. Then $A'/A$ is a finite module
 and $A\lat$ is a full subcategory of $A'\lat$. 
 An order is said to be \emph{maximal} if it has no overrings.
 An overring of $A$ which is a maximal order is called a \emph{maximal overring} of $A$. 
 An \emph{overmodule} of an $A$-lattice $M$ is an $A$-lattice $M'$ such that $M\sb M'\sb KM$. If $A'$ is an overring of $A$
 and $M$ is a $A$-lattice considered as a submodule in $KM$, then the $A'$-module $A'M$ is defined and is an overmodule of $M$.
 
 The following fact seems to be well-known. If $R$ is a \dvr, it is proved in \cite{cr}. The general case easily reduces to this one,
 though we have not found any source in the literature.
  
  \begin{prop}\label{du1} 
  Every $R$-order $A$ has a maximal overring. The center of a maximal order is a product of \dvr{s}. A connected maximal order
  has a unique indecomposable lattice (up to isomorphism). Conversely, if an order $A$ has a unique indecomposable lattice,
  it is maximal.
 \end{prop}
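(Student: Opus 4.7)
My plan is to reduce the proposition to the classical case where $R$ is a complete discrete valuation ring (treated in \cite{cr}), using two devices: passage to the integral closure of $R$ in $K$ for the forward assertions, and Morita equivalence plus a short non-commutative-DVR style calculation for the converse.

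For existence of a maximal overring, I would let $\tilde R$ be the integral closure of $R$ in $K$. Since $R$ is complete, reduced, one-dimensional and noetherian, $\tilde R$ is module-finite over $R$ and decomposes as $\tilde R=\prod_{i=1}^s\tilde R_i$ with each $\tilde R_i$ a complete DVR. Because $K\sbe Z(KA)$, the subring $\tilde R\cdot A\sbe KA$ is well-defined, module-finite over $R$, and semiprime (it lies inside the semisimple $KA$), hence is an overring of $A$; the central idempotents $e_i\in\tilde R$ split it as $\tilde R\cdot A=\prod_i A'_i$, each $A'_i$ being an $\tilde R_i$-order in $K_iAe_i$. Applying the classical existence result to each $A'_i$ produces a maximal overring $B_i\spe A'_i$; then $B=\prod_i B_i$ is an overring of $A$, and any $R$-order $C$ with $B\sbe C\sb KA$ necessarily contains the $e_i$, so decomposes as $\prod_i C_i$ with $C_i\spe B_i$, and maximality of each $B_i$ forces $C=B$. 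The assertion that $Z(B)$ is a product of complete DVRs then follows at once from the classical statement applied to each factor; and if $B$ is connected, its center is local, hence a single complete DVR, $B$ is a maximal order in a simple algebra over it, and the Curtis--Reiner theorem furnishes the unique indecomposable $B$-lattice.

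For the converse, I would assume $A$ has a unique indecomposable lattice $L$, so that $A\cong mL$ as $A$-module and $A$ has a single isotype of simples. Moreover $KA$ must be a simple $K$-algebra, for otherwise picking generators of two non-isomorphic simple $KA$-modules and taking their $A$-submodules would exhibit two non-isomorphic indecomposable $A$-lattices; this in turn forces $R$ to be a domain (as $K$ embeds into the field $Z(KA)$). Setting $\Lambda=\End_A(L)\op$, the module $L$ is a progenerator and Morita equivalence yields $A\cong\Mat_m(\Lambda)$; overrings of $A$ in $KA$ correspond bijectively to overrings of $\Lambda$ in $K\Lambda$ via $A'=\Mat_m(\Lambda')$, so it suffices to show $\Lambda$ is maximal. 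Now $\Lambda$ is local, every $\Lambda$-lattice is free (the unique indecomposable $\Lambda$-lattice being $\Lambda$ itself, by Morita), and $K\Lambda$ is simple artinian. Given any overring $B\spe\Lambda$, freeness combined with $\wh_\Lambda(B)=\wh_\Lambda(\Lambda)$ (since $KB=K\Lambda$) forces $B=\Lambda b$ for a single generator; as $1\in B$, $b$ has a left inverse in $\Lambda$ and is therefore a unit in $K\Lambda$, so writing $b^2=\lambda b$ with $\lambda\in\Lambda$ and cancelling $b$ on the right yields $b=\lambda\in\Lambda$, whence $B=\Lambda b\sbe\Lambda$ and $B=\Lambda$.

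The main technical point is the first step's reduction via $\tilde R$: one must verify that $\tilde R\cdot A$ is a bona fide $R$-order, that maximality can be checked factor by factor along the idempotents, and that no overring can escape the product decomposition. Once the set-up is in place, the rest is either a direct invocation of the DVR results of \cite{cr} or the elementary non-commutative-DVR computation closing the converse.
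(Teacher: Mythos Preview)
Your forward direction is essentially the paper's own argument: both pass to the integral closure of the base ring (the paper first replaces $R$ by $Z(A)$, a cosmetic difference), split into pieces over complete discrete valuation rings, and invoke \cite{cr} for each piece. For the converse, however, the paper simply says that ``the remaining assertions also follow from \cite{cr}'' once the DVR reduction is in hand, whereas you supply an independent, self-contained argument via Morita equivalence with the local order $\Lambda=(\End_AL)\op$ and the $b^2=\lambda b$ cancellation trick. Your route has the virtue of being explicit and of sidestepping any need to verify how the DVR reduction interacts with the hypothesis ``unique indecomposable lattice''; the paper's route is shorter on the page but leans more heavily on the reference. One small imprecision worth flagging: your parenthetical ``$K$ embeds into the field $Z(KA)$'' is not literally correct when $R$ is not a domain --- the ring map $K\to Z(KA)$ then kills all but one factor $K_i$ --- but you never actually use this claim, since the Morita correspondence of overrings and the final computation in $K\Lambda$ go through without any domain hypothesis on $R$.
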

 \begin{proof}
   We may suppose $A$ connected. Then its center $Z(A)$ is also local and complete. Every overring of $A$ is a $Z(A)$-order, so
   we may suppose that $Z(A)=R$. Then $Z(KA)=K$. Let $S$ be the integral closure of $R$ in $K$. As $R$ is local and complete,
   it is an \emph{excellent ring} \cite{ma}. In particular, $S$ is a \fg\ $R$-module. As it is integrally closed, it is a direct product of \dvr{s}. The ring
   $SA$ is an $S$-order and an overring of $A$. It splits into a direct product of orders whose centers are \dvr{s}. Then \cite[Theorem~26.5]{cr}
   inplies that $SA$, hence also $A$, has a maximal overring $A'$ and $Z(A')=S$. Now the remaining assertions also follow from \cite{cr}.
 \end{proof}
 
 As the algebra $K A$ is semisimple, every \fg\ $K A$-module embeds into a \fg\ free module. It easily implies that any $A$-lattice $M$ embeds 
 into a free $A$-module. Thus $A$-lattices are just submodules of free modules (\emph{torsionless modules} in the sense of Bass \cite{ba}).

 \begin{prop}\label{du2} 
 Let $I\in A\lat$. \feq
 \begin{enumerate}
 \item  $\idim_A I=1$.
 \item  $\ext^1_A(M,I)=0$ for any $M\in A\lat$.
 \item  $\ext^i_A(M,I)=0$ for any $M\in A\lat$ and any $i\ge1$.
 \item  Any exact sequence $0\to I\to N\to M\to0$, where $M\in A\lat$ splits.
 \item  $I\simeq DP$, where $P$ is a \fg\ projective $A\op$-module.
 \item  $I$ is a direct summand of $m\om_A$ for some $m$.
 \end{enumerate}
 \emph{We call a lattice $I$ satisfying these conditions} L-injective. \emph{If an L-injective lattice is indecomposable, we call it} coprincipal.
 \end{prop}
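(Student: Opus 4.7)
The plan is to dispose of $(2) \Leftrightarrow (4)$ and $(5) \Leftrightarrow (6)$ by inspection, run the chain $(4) \Rightarrow (6) \Rightarrow (3) \Rightarrow (2)$ via the duality $D$, and close with $(6) \Rightarrow (1) \Rightarrow (2)$ to handle the injective dimension. Conditions $(2)$ and $(4)$ are equivalent by the standard $\ext^1$-description of extensions; $(3) \Rightarrow (2)$ is a specialization; and $(5) \Leftrightarrow (6)$ follows from $\om_A = DA$: then $D(mA) = m\om_A$, and the exact duality $D$ puts \fg\ projective $A\op$-modules (summands of $mA$) into bijection with summands of $m\om_A$.

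For $(4) \Rightarrow (6)$, pick a surjection $mA \twoheadrightarrow DI$ in $A\op\md$; its kernel $K$ is a right $A$-lattice, so applying $D$ yields the exact sequence $0 \to I \to m\om_A \to DK \to 0$ of $A$-lattices, which splits by (4) and exhibits $I$ as a summand of $m\om_A$. For $(6) \Rightarrow (3)$ it suffices to show $\ext^i_A(M, \om_A) = 0$ for every $M \in A\lat$ and $i \ge 1$: take an $A$-projective resolution $P_\bullet \to M$, and note that syzygies of lattices are lattices (being submodules of lattices), so the entire resolution sits in $A\lat$. The adjunction $\hom_A(-, \hom_R(A, \om_R)) = \hom_R(-, \om_R)$ identifies $\hom_A(-, \om_A)$ with $D$ on $A\lat$, and exactness of $D$ there forces the cohomology of $\hom_A(P_\bullet, \om_A)$ to vanish in positive degrees.

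The same adjunction, now applied to an $A$-projective resolution of an \emph{arbitrary} $A$-module $N$, gives $\ext^i_A(N, \om_A) = \ext^i_R(N, \om_R)$ for all $i$, because each $P_i$ is an $R$-lattice, hence maximal \cm\ over $R$, hence acyclic for $\hom_R(-, \om_R)$. Since $\idim_R \om_R = 1$ this vanishes for $i \ge 2$, so $\idim_A \om_A \le 1$, with equality since $\om_A$ is a nonzero lattice and no nonzero lattice is $A$-injective (the inclusion $\om_A \hookrightarrow K\om_A$ cannot split, as the cokernel is a nonzero torsion module while $K\om_A$ is torsion-free). The same bound passes to direct summands of $m\om_A$, proving $(6) \Rightarrow (1)$.

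The implication $(1) \Rightarrow (2)$ is the subtle step and, I expect, the main obstacle. From $\idim_A I \le 1$ one only gets the vanishing of $\ext^{\ge 2}$, but the torsionlessness of lattices saves the day: fix an embedding $M \hookrightarrow F$ of $M \in A\lat$ into a \fg\ free $A$-module $F$ with cokernel $C$, and read off the segment
$$\ext^1_A(F, I) \to \ext^1_A(M, I) \to \ext^2_A(C, I) \to \ext^2_A(F, I)$$
of the long exact sequence obtained by applying $\hom_A(-, I)$ to $0 \to M \to F \to C \to 0$. Both outer terms vanish (by projectivity of $F$ and by (1), respectively), whence $\ext^1_A(M, I) = 0$. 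The essential point to anticipate is precisely this step: turning an injective-dimension bound into $\ext^1$-vanishing on $A\lat$ uses the torsionless structure of lattices in a nontrivial way, since $\idim$ by itself only controls Ext strictly above the critical degree.
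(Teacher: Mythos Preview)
Your proof is correct and tracks the paper's argument closely: the equivalences $(2)\Leftrightarrow(4)$, $(5)\Leftrightarrow(6)$, the passage $(4)\Rightarrow(6)$ via dualizing a projective presentation of $DI$, the vanishing $(6)\Rightarrow(3)$ via exactness of $D$ on lattices, and the dimension shift $(1)\Rightarrow(2)$ by embedding $M$ into a free module are all exactly what the paper does (with cosmetic regroupings). The one genuine deviation is how you link $(1)$ back into the cycle. The paper argues $(3)\Rightarrow(1)$ using the characterization $\idim I=\sup\{i:\ext^i_A(A/L,I)\ne0\}$ together with the observation that every left ideal $L$ is a lattice, so $(3)$ kills the relevant $\ext$ groups; you instead prove $(6)\Rightarrow(1)$ directly by the change-of-rings identity $\ext^i_A(N,\om_A)\simeq\ext^i_R(N,\om_R)$, obtained from the adjunction $\hom_A(-,\om_A)=\hom_R(-,\om_R)$ and the acyclicity of $A$-projectives for $\hom_R(-,\om_R)$. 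Your route is a bit heavier (it needs the acyclic-resolution lemma) but yields the useful global identity $\ext^i_A(-,\om_A)=\ext^i_R(-,\om_R)$ on all of $A\md$, whereas the paper's argument is shorter and stays entirely inside $A\lat$.
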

 \begin{proof}
  $(3)\Arr(2)$ and $(2)\Equ(4)$ are obvious. 
  
  $(2)\Arr(3)$ since in a projective resolution
  \[
   \dots \to P_n\xarr{d_n} P_{n-1}\xarr{d_{n-1}} \dots\to P_2\xarr{d_2} P_1\xarr{d_1} P_0\to M\to 0
  \] 
  of $M$ all modules $M_i=\im d_i$ are lattices and $\ext^i_A(M,I)\simeq\ext^1_A(M_{i-1},I)$ for $i>1$.
  
  $(4)\Arr(5)$. By duality, $(4)$ means that every exact sequence $0\to M\to N\to D I\to 0$ splits. As there is such a sequence with projective $N$,
  it implies that $P=DI$ is projective and $I\simeq DP$.
  
  $(5)\Arr(6)$. Since a projective module $P$ is a direct summand of a free module $mA$, the module $I=DP$ is a direct summand of
  $D(mA)=m\om_A$.
  
  $(6)\Arr(2)$. Let $M$ be an $A$-lattice. Consider an exact sequence $0\to N\to P\to M\to 0$ with projective $P$. As all these modules are lattices, 
  the induced sequence
  \[
   0\to \hom_A(M,\om_A)\to \hom_A(P,\om_A)\to \hom_A(N,\om_A)\to 0
  \]
  is also exact, whence $\ext^1_A(M,\om_A)=0$. Therefore, the same holds for $m\om_A$ and for its direct summand $I$.
  
  $(3)\Equ(1)$. It is known that
  \begin{align*}
    \idim I&=\sup\setsuch{i}{\ext^i_A(A/L,I)\ne0 \text{ for some left ideal } L}=\\
    			&=\sup\setsuch{i}{\ext^{i-1}_A(L,I)\ne0 \text{ for some left ideal } L}.
  \end{align*} 
   As any ideal is a lattice, $(3)\Arr(1)$. On the contrary, if (1) holds and $M$ is a lattice, embed it into a projective module $P$. Then
   $\ext^i_A(M,I)=\ext^{i+1}_A(P/M,I)=0$ if $i\ge1$, so (3) holds.
 \end{proof}
 \noindent
 The category $A\lat$ becomes an \emph{exact category} \cite{kel} if we consider as \emph{exact pairs} (or \emph{conflations}) 
 usual short exact sequences, i.e. all triples $N\xarr\al M\xarr\be L$, where $\al=\ker\be$ and $\be=\cok\al$. Therefore, \emph{deflations} are 
 epimorphisms and \emph{inflations} are monomorphisms with torsion free cokernels (we will often use this term). 
 This exact category contains enough projectives and injectives. 
 Namely, projectives are just usual \fg\ projective $A$-modules, while injectives are L-injective lattices. To obtain a conflation
 $M\to I\to N$ with L-injective $M$, one just has to dualize an exact sequence $0\to L\to P\to D M\to0$ with projective $P$. 
 
 For lattices $M,N$ we write $M\sea N$ (respectively, $N\nea M$) if there is an epimorphism $rM\to N$ (respectively, an inflation
 $N\to rM$) for some $r$. For instance, $A\sea M$ and, dually, $M\nea\om_A$ for any lattice $M$. 
 We write $N\Sb M$ if $N$ is a direct summand of $rM$ for some $r$ 
 and $M\bowtie N$ if both $M\Sb N$ and $N\Sb M$. Recall that $\CM(A)$ is a Krull--Schmidt category. So, if $N$ is indecomposable, $N\Sb M$ 
 means that $N$ is a direct summand of $M$ and $M\bowtie N$ means that $M$ and $N$ have the same set of indecomposable direct summands.
 Note that the relations $\sea\,,\nea$ and $\Sb$ are transitive. 
 
    \begin{defin}\label{du3} 
    Let $M$ be an $A$-lattices, $E=\End_AM$ and $O(M)=\End_EM$. If the natural map $A\to O(M)$ is an isomorphism, we say that $M$
    is a \emph{strict} $A$-lattice. Obviously, then $M$ is faithful.
   \end{defin}
   \noindent
      Certainly, $O(M)$ is an overring of $A/\ann_AM$. Due to the Burnside density theorem \cite[Theorem~2.6.7]{dk}, then $O(M)$ can be 
   identified with the subset $\setsuch{a\in KA/\ann KM}{aM\sbe M}$. In particular, a faithful $A$-lattice $M$ is strict \iff 
   $\setsuch{a\in KA}{aM\sbe M}=A$. If $M$ and $N$ are faithful and $M\sea N$ or $N\nea M$, then $O(N)\spe O(M)$.
   
   \begin{prop}\label{du4} 
    For every $A$-lattice $M$ there is an exact sequence 
    \begin{equation}\label{edu1} 
    0\to O(M)\to nM\to mM
    \end{equation}
    for some $m,n$. In particular, $M$ is strict \iff there is an exact sequence
    \begin{equation}\label{edu2} 
    0\to A\xarr\al nM\xarr\be mM,
    \end{equation}   
    that is  $A\nea M$.
   \end{prop}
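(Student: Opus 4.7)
The plan is to construct~\eqref{edu1} by applying a suitable $\hom$-functor to a projective presentation of $M$ over its own endomorphism ring. Write $E=\End_AM$. First I would verify that $E$ is a finite (hence Noetherian) $R$-algebra, being a subalgebra of $\End_RM$, and that the subalgebra $R\sbe A$ acts on $M$ through the centre of $E$, since scalar multiplication by $r\in R$ commutes with every $A$-endomorphism. This immediately implies that any finite set of $R$-generators of $M$ is also a set of right $E$-generators: for each such $m_i$ and each $r\in R$, one has $rm_i=m_i\cdot(r\,\id_M)$ with $r\,\id_M\in E$.

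Once I know $M$ is \fg\ over the Noetherian ring $E$, it is automatically finitely presented, so there is an exact sequence $mE\to nE\to M\to 0$ of right $E$-modules. Applying the left exact contravariant functor $\hom_E(-,M)$, together with the natural isomorphism $\hom_E(E,M)\simeq M$, yields
\[
0\to\End_EM\to nM\to mM,
\]
which is~\eqref{edu1}, since $\End_EM=O(M)$ by definition and every arrow is $A$-linear via the canonical map $A\to O(M)$.

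For the ``in particular'' clause, if $M$ is strict then $O(M)=A$ and~\eqref{edu1} becomes~\eqref{edu2}. Conversely, from a sequence $0\to A\xarr\al nM\xarr\be mM$, the map $\al$ is an inflation (its cokernel embeds into the lattice $mM$), so $A\nea M$, and the injectivity of $\al$ forces $M$ to be faithful. The order-reversal observation recorded immediately before the proposition then gives $O(M)\sbe O(A)=A$, and combined with the canonical inclusion $A\sbe O(M)$ this yields $A=O(M)$, i.e. $M$ is strict.

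I expect no serious obstacle: the only nontrivial verification is that $M$ is \fg\ as a right $E$-module, which is a short consequence of the centrality of $R$ in $E$. Everything else is a formal dualisation and an invocation of the order-reversal property already in hand.
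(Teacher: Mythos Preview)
Your argument is correct and follows the same route as the paper: take a free presentation $mE\to nE\to M\to 0$ of $M$ over $E=\End_AM$ and apply $\hom_E(-,M)$. You are actually more careful than the paper in justifying that $M$ is finitely generated over $E$; the paper simply asserts the existence of the presentation. For the converse, the paper argues directly on the given sequence (identifying $A$ with $\ker\be\sbe nM$ and observing that any $a\in O(M)\sbe KA$ preserves this kernel because $\be$ extends $KA$-linearly), whereas you invoke the order-reversal $A\nea M\Rightarrow O(M)\sbe O(A)=A$ stated just before the proposition; these are the same computation packaged differently.
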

   \begin{proof}
    If $E=\End_AM$, there is an exact sequence of $E$-modules $mE\to nE\to M\to 0$. Applying the functor $\hom_E(\_\,,M)$, we 
    obtain the exact sequence \eqref{edu1}. If $M$ is strict, it coincides with \eqref{edu2}. On the contrary, let an exact sequence \eqref{edu2}
    exists. Then $M$ is faithful. We identify $A$ with $N=\setsuch{u\in nM}{\be(u)=0}$ If $a\in O(M)$, it is clear that $aN\sbe N$, whence $a\in A$.
   \end{proof}
   
   \begin{corol}\label{du5} 
   An $A$-lattice $M$ is strict \iff there is an exact sequence 
    \begin{equation}\label{edu3} 
     mM\to nM\to \om_A\to0,
    \end{equation}
     that is $M\sea \om_A$.
   \end{corol}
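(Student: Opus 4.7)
The plan is to reduce to Proposition~\ref{du4} via the duality $D$. The essential preliminary step is to check that $M$ is strict as a left $A$-lattice if and only if $DM$ is strict as a right $A$-lattice. This I would verify using the description noted after Definition~\ref{du3}: for a faithful lattice $M$ one has $O(M)=\setsuch{a\in KA}{aM\sbe M}$, and the analogous identification for $O(DM)$ (with $a$ acting on the right) gives $O(DM)=\setsuch{a\in KA}{(DM)a\sbe DM}$. Because the right action of $a\in KA$ on a functional $\vi\in DM$ is defined by $(\vi a)(m)=\vi(am)$, the two conditions $aM\sbe M$ and $(DM)a\sbe DM$ coincide; hence $O(M)=O(DM)$ as subrings of $KA$, and strictness is preserved by $D$.

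Next, I would apply Proposition~\ref{du4} to the right $A$-lattice $DM$: it is strict iff there is an exact sequence $0\to A\to n(DM)\to m(DM)$ of right $A$-modules. Applying the contravariant exact duality $D$ to this sequence, using $DD=\id$ and $DA=\om_A$ (with $A$ viewed as right $A$-module and $\om_A$ as left $A$-module, per Proposition~\ref{du2}), yields precisely \eqref{edu3}. For the converse, starting from \eqref{edu3} one dualizes by applying $\hom_A(\_\,,\om_A)$; the L-injectivity of $\om_A$ from Proposition~\ref{du2} guarantees the relevant $\ext^1$-vanishing, so the resulting sequence $0\to A\to n(DM)\to m(DM)$ of right $A$-modules is exact, and Proposition~\ref{du4} then shows that $DM$, and hence $M$, is strict.

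The only real obstacle is the careful bookkeeping of left versus right module structures across the duality and the verification that $O(M)$ and $O(DM)$ coincide as subrings of $KA$; once these are settled, the corollary is a direct dualization of Proposition~\ref{du4}.
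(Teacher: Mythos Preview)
Your proposal is correct and follows exactly the route the paper intends: the corollary is stated without proof precisely because it is the dual of Proposition~\ref{du4}, and your argument makes this dualization explicit. The preliminary check that $O(M)=O(DM)$ inside $KA$ (hence that strictness is preserved by $D$) is the right thing to verify; note only that the implication $(DM)a\sbe DM\Rightarrow aM\sbe M$ tacitly uses the reflexivity $M\simeq DDM$, so you should mention it. For the converse direction you may simplify: rather than invoking the L-injectivity of $\om_A$ and $\ext^1$-vanishing, you can appeal directly to the fact, recorded at the start of Section~\ref{du}, that $D$ is an \emph{exact} duality on $A\lat$, so dualizing \eqref{edu3} immediately yields $0\to A\to n(DM)\to m(DM)$.
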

   \noindent
   We will also use another kind of duality analogous to the Matlis duality \cite{mat}.
   
   \begin{theorem}\label{du6} 
      Let $T_R=K\om_R/\om_R$. Denote $\hat{M}=\hom_R(M,T_R)$. The functor $M\mps\hat{M}$ induces an exact 
      duality between the categories of noetherian and artinian $R$-modules. 
    \end{theorem}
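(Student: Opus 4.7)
The plan is to identify $T_R$ with the injective hull $E(\aK)$ of the residue field $\aK=R/\gM$ and then invoke the classical Matlis duality theorem \cite{mat} for complete noetherian local rings.

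First I would check that $T_R$ is an injective $R$-module. Since $K\om_R\simeq K$ is a finite product of fields (the $K_i$ are localizations of $R$ at its minimal primes), it is injective as an $R$-module. Applying the long exact $\ext$ sequence to
\[
0\to \om_R\to K\om_R\to T_R\to 0
\]
and using $\idim_R\om_R=1$ (so $\ext^2_R(N,\om_R)=0$) together with the injectivity of $K\om_R$, I obtain $\ext^1_R(N,T_R)=0$ for every $R$-module $N$; hence $T_R$ is injective by Baer's criterion.

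Next I would show that $\hom_R(\aK,T_R)\simeq\aK$ and that $T_R$ is $\gM$-torsion. The first claim follows by applying $\hom_R(\aK,-)$ to the same exact sequence: since every nonzero element of $K\om_R\simeq K$ is a non-zero-divisor, $\hom_R(\aK,K\om_R)=0$, and the defining property of the canonical module gives $\hom_R(\aK,T_R)\simeq\ext^1_R(\aK,\om_R)\simeq\aK$. For the second claim, given $\bar x\in T_R$ represented by $x\in K\om_R$, there is a non-zero-divisor $s\in R$ with $sx\in\om_R$; since $R$ is one-dimensional, reduced and local, the ideal $(s)$ is $\gM$-primary, so $\gM^n\sbe(s)$ for some $n$, and then $\gM^n x\sbe R\cdot sx\sbe\om_R$, i.e.\ $\gM^n\bar x=0$.

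Every indecomposable injective $R$-module has the form $E(R/\gP)$ for some prime $\gP$, with socle $\aK$ when $\gP=\gM$ and trivial socle otherwise. Decomposing the injective $T_R$ into such indecomposable summands, the $\gM$-torsion property eliminates all minimal-prime summands, while $\hom_R(\aK,T_R)\simeq\aK$ forces exactly one summand $E(\aK)$. Hence $T_R\simeq E(\aK)$, and the theorem follows from classical Matlis duality. The main obstacle is precisely this identification: injectivity uses $\idim\om_R=1$, the socle computation uses the defining relation $\ext^1_R(\aK,\om_R)\simeq\aK$, and the torsion step uses $\dim R=1$. Once $T_R\simeq E(\aK)$ is established, the exactness of $M\mapsto\hat M$, the interchange of noetherian and artinian modules, and the biduality isomorphism $M\ito\hat{\hat M}$ are formal consequences of the standard theory.
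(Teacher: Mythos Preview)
Your argument is correct, with one small slip of phrasing: $K$ is a product of fields, so it is not true that ``every nonzero element of $K\om_R\simeq K$ is a non-zero-divisor''. What you actually need (and what your reasoning uses) is that no nonzero element of $K$ is annihilated by a non-zero-divisor of $R$, which indeed gives $\hom_R(\aK,K)=0$.

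Your route is genuinely different from the paper's. You identify $T_R$ with $E(\aK)$ via the Matlis structure theory of injectives (injectivity of $T_R$, one-dimensional socle, $\gM$-torsion) and then quote classical Matlis duality as a black box. The paper instead proves the duality directly, without ever naming $E(\aK)$: it computes $\hat{T}_R\simeq R$ from the defining exact sequence, deduces biduality for noetherian modules from a free presentation, obtains the noetherian\,$\to$\,artinian direction by a descending-chain argument, and for an artinian $N$ builds by hand an embedding $N\hookrightarrow mT_R$ (iteratively extending maps from simple submodules) to get biduality and the artinian\,$\to$\,noetherian direction. Your approach is shorter and more conceptual once one is willing to import the structure theory of injectives and the full Matlis theorem; the paper's approach is self-contained and, in effect, reproves the needed part of Matlis duality in this concrete setting without invoking the classification of indecomposable injectives.
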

   \begin{proof}
    \textbf{Step~1}.
     We denote by $\ga_M$ the natural map $M\to\hat{\hat{M}}$.
    Any $KR$-module $V$ is an injective $R$-module and $\hom_R(V,M)=0=\hom_R(L,V)$ for any noetherian $M$ and any torsion $R$-module $L$.
    As $\idim_R\om_R=1$, $T_R$ is also an injective $R$-module. So the functor $M\mps\hat{M}$ is exact. If an $R$-module $L$ is torsion, apply 
    the functor
    $\hom_R(L,\_\,)$ to the exact sequence $0\to \om_R\to K\om_R\to T_R\to 0$. We obtain that $\hat{L}\simeq\ext^1_R(L,\om_R)$. In particular, 
    $\hat{\hat{R}}=\hat{T}_R\simeq\ext^1_R(T_R,\om_R)$. Apply to the same exact sequence the functor $\hom_R(\_\,,\om_R)$. It gives 
    $R=\hom_R(\om_R,\om_R)\simeq\ext^1_R(T_R,\om_R)=\hat{T}_R$. Thus $\ga_R$ and $\ga_{T_R}$ are isomorphisms. Now the usual observation 
    using a free presentation $mR\to nR\to M\to 0$ shows that $\ga_M$ is an isomorphism for any noetherian $R$-module $M$. 
    
    \medskip\noindent
    \textbf{Step~2}.
       Show that the module $N=\hat{M}$ is artinian if $M$ is noetherian. Indeed, if $N_1\sb N$, this embedding induces a surjection 
   $M=\hat{N}\xarr\al \hat{N_1}$ with $\ker\al\simeq\widehat{N/N_1}$. Moreover, if $N_2\sb N_1$, we have surjections 
   $M\xarr\al \hat{N_1}\xarr\be \hat{N_2}$ such that $\ker\be\al\sp\ker\al$. Thus any descending chain of submodules in $\hat{M}$
   gives an ascending chain of submodules in $M$. Therefore, there are no infinite descending chains in $\hat{M}$. In particular, the module
   $T_R=\hat{R}$ is artinian.    
    
       \medskip\noindent
    \textbf{Step~3}.
   Let now the module $N$ be artinian. It contains a simple submodule $U$. As $\hom_R(U,T_R)\ne0$ and $T_R$ is injective, there is a non-zero
   homomorphism $\al_0:N\to T_R$. As $\ker\al_0$ is also artinian, there is a non-zero homomorphism $\ker\al_0\to T_R$, which extends to a 
   homomorphism
   $\al':N\to T_R$. Let $\al_1=\smtr{\al_0\\ \al'}:N\to 2T_R$. Then $\ker\al_1\sb \ker\al_0$. Iterating this procedure, we obtain homomorphisms 
   $\al_k:N\to kT_R$ such that $\ker\al_{k+1}\sb\ker\al_k$ if $\ker\al_k\ne0$. As $N$ is artinian, there is an embedding $\be:N\to mT_R$
   for some $m$. As $\cok\be$ is also artinian, we have an exact sequence $0\to N\to mT_R\to nT_R$. Since the map $\ga_{T_R}$ is an
   isomorphism, it implies that $\ga_N$ is also an isomorphism. The observation analogous to Step~2 shows that $\hat{N}$ is noetherian, 
   which accomplishes the proof.
   \end{proof}
 \noindent
   Obviously, if we restrict this duality to $A$-modules, we obtain a duality between the categories of left (right) noetherian and right (left)
   artinian $A$-modules. One easily sees that the category of lattices is then mapped to the category of artinian modules without finite quotients.
   
   \medskip\noindent
  The duality $M\mps\hat{M}$ is closely related to the duality $D$.
  
  \begin{prop}\label{du7} 
  Let $0\to M\xarr\al N\to L\to 0$ be an exact sequence of $A$-modules, where $M,N$ are lattices and $L$ is finite. Then there is an exact
  sequence $0\to DN\xarr{D\al}DM\to\hat{L}\to 0$. In particular, if $M$ is a maximal submodule in $N$, $DN$ is a minimal overmodule of
  $DM$ and vice versa.
  \end{prop}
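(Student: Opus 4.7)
The plan is to apply the contravariant functor $\hom_R(\_\,,\om_R)$ to the given short exact sequence and read the desired conflation off the long exact sequence of $\ext$. Three ingredients will do all the work. First, $\hom_R(L,\om_R)=0$, because $L$ is finite (hence torsion) while $\om_R$ is an $R$-lattice and therefore torsion free. Second, $\ext^1_R(N,\om_R)=0$: indeed, $\om_R$ is trivially a direct summand of $1\cdot\om_R$, so it is L-injective as an $R$-module by Proposition~\ref{du2} applied with $A=R$, and $N$ is a lattice. Third, $\ext^1_R(L,\om_R)\simeq\hat L$ for any torsion $R$-module $L$, an identification already established in Step~1 of the proof of Theorem~\ref{du6} (apply $\hom_R(L,\_\,)$ to $0\to\om_R\to K\om_R\to T_R\to0$ and use that $K\om_R$ is $R$-injective as a $KR$-module). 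Substituting these three facts into the long exact sequence immediately yields $0\to DN\xarr{D\al} DM\to\hat L\to0$.

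For the addendum, $M$ being a maximal submodule of $N$ means exactly that $L=N/M$ is a simple $A$-module. Since $\hat{\phantom{x}}$ is an exact duality between noetherian and artinian modules (Theorem~\ref{du6}), it sends simples to simples, so $\hat L$ is again simple. The conflation just obtained then forces that no lattice lies strictly between $DN$ and $DM$ (any such intermediate lattice would correspond to a proper nonzero submodule of $\hat L$), which is the desired minimality statement. The ``vice versa'' direction follows by the symmetric argument together with $DD\simeq\id$.

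No step here is substantive; the only care required is keeping track of the two roles $\om_R$ plays in the argument --- as the L-injective lattice responsible for the vanishing $\ext^1_R(N,\om_R)=0$, and as the coefficient module computing $\hat L$ via $\ext^1_R(L,\om_R)$.
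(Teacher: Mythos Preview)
Your proof is correct and follows essentially the same route as the paper: apply the duality functor to the short exact sequence, use that $\hom$ of a finite module into $\om$ vanishes, that $\ext^1$ of a lattice into $\om$ vanishes, and the identification $\hat L\simeq\ext^1(L,\om)$ from Step~1 of Theorem~\ref{du6}. The only cosmetic difference is that you compute at the $R$-level with $\om_R$ while the paper works at the $A$-level with $\om_A$; these agree via the adjunction $\hom_A(\_\,,\om_A)\simeq\hom_R(\_\,,\om_R)$, so the arguments are interchangeable.
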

  \begin{proof}
   We have already seen in Step~1 of the previous proof that $\hat{L}\simeq\ext^1_A(L,\om_A)$. Note also that $\hom_A(L,\om_A)=0$. So we obtain
   the result if we apply to the given exact sequence the functor $\hom_A(\_\,,\om_A)$.
  \end{proof}
  \noindent
     Let $M$ be an $A$-lattice, $\gR=\rad A$. As $(DM)\gR$ is the intersection of maximal submodules of $DM$, its dual $M^\gR=D((DM)\gR)$
 is the sum of all minimal overmodules of $M$. If $\pi:P\xarr\pi DM$ is a projective cover of $DM$, its dual $D\pi:M\to DP$ is an inflation $\io:M\to I$ 
 such that $I$ is L-injective and $\io$ induces an isomorphism $I^\gR/I\to M^\gR/M$. We call it (and sometimes the map $\io$)
 the \emph{L-injective envelop} of $M$. We also define iterated overmodules $M^{\gR*k}$ setting $M^{\gR*1}=M^\gR$ and 
 $M^{\gR*(k+1)}=(M^{\gR*k})^\gR$. Obviously, $M^{\gR*k}=D((DM)\gR^k)$. As a principal $A$-module $P$ has one maximal submodule $\gR P$, 
 a coprincipal $A$-lattice $I$ has one minimal overmodule $I^\gR$. 

   \section{Bijective lattices and Gorenstein orders}
  \label{bi}
  
  \noindent
   Let $A$ be an $R$-order, $\gR=\rad A$. In this section we always suppose that $A$ is connected.
   
  \begin{defin}\label{bi1} 
   An $A$-lattice $B$ is called \emph{bijective} \cite{qb} if it both projective and L-injective.
  \end{defin}
  \noindent
  The main property of bijective lattices is the so called \emph{rejection lemma} (cf. \cite[Lemma~2.9]{qb}).
  
  \begin{lemma}\label{bi2} 
   Let  $B$ be a bijective $A$-lattice. Either there is a unique overring $A'$ such 
   that every $A$-lattice $M$ is isomorphic to $B'\+M'$, where $M'$ is an $A'$-lattice and $B'\Sb B$, or $A$ is hereditary and $A\Sb B$ 
   (then $M\Sb B$ for any $A$-lattice $M$).  
   
   \medskip\noindent
   \emph{We say that $A'$ is obtained from $A$ by \emph{rejection} of $B$ and denote it by  $A^-(B)$. Obviously, if $B$ is indecomposable,
   $A^-(B)$ is a minimal proper overring of $A$.}
  \end{lemma}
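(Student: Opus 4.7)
My approach splits into a dichotomy based on whether every $A$-lattice lies in $\mathrm{add}(B)$.

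Let $\mathcal L$ denote the collection of $A$-lattices with no direct summand in $\mathrm{add}(B)$; by Krull--Schmidt in $A\lat$, one has $\mathcal L=\{0\}$ iff every indecomposable $A$-lattice is a summand of $B$. Suppose first that $\mathcal L=\{0\}$. Then every $A$-lattice is a summand of some $rB$, hence projective (summands of the projective module $rB$ are projective), so $A$ is hereditary; in particular $A\Sb B$, giving the second alternative of the lemma.

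Assume now $\mathcal L\ne\{0\}$. I would define
\[
A'=\setsuch{x\in KA}{xM\sbe M\text{ for every }M\in\mathcal L}.
\]
This is an $R$-subalgebra of $KA$ containing $A$, and an $R$-order because $A'\sbe O(M_0)$ for any faithful $M_0\in\mathcal L$ (such an $M_0$ can be constructed as a direct sum of sufficiently many indecomposable members of $\mathcal L$ so that the combined annihilator vanishes). For any $A$-lattice $M$, Krull--Schmidt gives a decomposition $M=M_B\+ M'$ with $M_B\in\mathrm{add}(B)$ maximal (so $M_B\Sb B$) and $M'\in\mathcal L$; then $M'$ is an $A'$-lattice by the definition of $A'$. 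Uniqueness is immediate: any overring $A''$ with the stated splitting property stabilises every $M\in\mathcal L$, hence $A''\sbe A'$, and since $A'$ itself has the splitting property, $A''=A'$.

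The main obstacle is proving $A'\supsetneq A$ whenever $\mathcal L\ne\{0\}$, i.e., producing genuine new overring elements. The key is the joint rigidity imposed by bijectivity of $B$ on $\mathcal L$-lattices: for $M\in\mathcal L$, no inflation $B\hookrightarrow M$ can exist, since by Proposition~\ref{du2}(4) it would split and produce a $B$-summand of $M$; dually, no surjection $M\twoheadrightarrow B$ exists, as any such would split by projectivity of $B$. This inertia of the $A$-action on $M$ along the $B$-direction is what produces elements of $KA\setminus A$ preserving $M$. An explicit witness in the indecomposable case $B=Ae$ with $A(1-e)\ne 0$ is provided by the overring $\setsuch{x\in KA}{xA(1-e)\sbe A(1-e)}$, whose strict containment of $A$ follows by exhibiting an endomorphism of the L-injective envelope of $A(1-e)$ that fails to be induced by left multiplication by $A$; one then verifies that this explicit overring coincides with the abstract $A'$.
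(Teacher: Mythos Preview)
Your setup and the definition of $A'$ are essentially the same as the paper's, and the dichotomy and uniqueness arguments are fine. The genuine gap is your proof that $A'\supsetneq A$. Your final paragraph is a plan, not a proof: you assert that the ``inertia'' of the $B$-direction produces overring elements, and that an explicit witness can be found via an endomorphism of the L-injective envelope of $A(1-e)$, but you neither construct such an endomorphism nor verify it lies outside $A$, nor do you check that the resulting overring coincides with your abstract $A'$. Also, even in the indecomposable case $B=Ae$, the complement $A(1-e)$ may itself contain summands isomorphic to $B$, so it need not lie in $\mathcal L$.

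The paper's argument for $A'\supsetneq A$ is short and uses a tool you have available but did not invoke: the notion of a \emph{strict} lattice (Proposition~\ref{du4}). Restrict first to \emph{faithful} members of $\mathcal L$; by noetherianity the intersection defining $A'$ equals $O(N)$ for a single finite direct sum $N=\bigoplus M_i$ of such lattices. If $O(N)=A$, then $N$ is strict, so $A\nea N$; since $B$ is a projective summand of $A$, also $B\nea N$, and L-injectivity of $B$ forces $B\Sb N$---contradicting $N\in\mathcal L$. Hence $A'=O(N)\supsetneq A$. (One must then check, as the paper does, that every non-faithful $M\in\mathcal L$ can be enlarged to a faithful one still in $\mathcal L$; this uses a small argument about simple components of $KA$.) This is the missing key lemma; once you insert it, your outline becomes the paper's proof.
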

     
   \begin{remk}\label{bi2a} 
    By duality, $DB$ is a bijective right $A$-lattice and every right $A$-lattice $N$ is isomorphic to $B'\+N'$, where $B'\Sb B$ and $N'$
    is a right $A'$-lattice.
   \end{remk}

  \begin{proof}
    If $M\Sb B$, then $M$ is projective. Thus if $M\Sb B$ for every $A$-lattice $M$, $A$ is hereditary. So we can suppose that there are 
    lattices $M$ such that $M\not\Sb B$. Certainly, then there are faithful lattices with this property.
   If $M$ is a strict $A$-lattice, then $A\nea M$. As $B$ is projective, also $B\nea M$, whence $B\Sb M$, since $B$ is L-injective.
   Let $A'=\bigcap_M O(M)$, where $M$ runs through all faithful $A$-lattices that does not have a direct summand $B'\Sb B$. 
   There is a finite set of lattices $\lst Mn$ such that $A'=O(N)$, where $N=\bop_{i=1}^nM_i$. 
   If $N$ is strict, $B\Sb N$, which is impossible. 
   Hence $A'\sp A$ and every faithful $A$-lattice $M$ without direct summands $B'\Sb B$ is an $A'$-lattice. Let $M$ be any  
   $A$-lattice that has no direct summands $B'\Sb B$ and $\lst Us$ be all non-isomorphic simple $KA$-modules. If $M$ is not faithful, 
   at least one of them, say $U_1$, does not occur as a direct summand of $KM$. We claim that there is a $A$-lattice $L\sb U_1$ 
   such that $L\not\Sb B$. Then we replace $M$ by $M\+L$ and, continuing this procedure, obtain a faithful $A$-lattice $M'$ without 
   direct summands $B'\Sb B$ such that $M$ is its direct summand. Therefore, $M'$, hence also $M$ is an $A'$-lattice. 
   
   Suppose that $L\Sb B$ for every $A$-lattice $L\sb U_1$. Let $C$ be a simple component of $KA$ such that $U_1$ is a $C$-module,
   $A_1$ be the projection of $A$ onto $C$. If $M$ is any $A_1$-lattice, it has a chain of submodules whose factors are submodules
   of $U_1$. Hence it is projective, so $A_1$ is hereditary and is a direct factor of $A$. It implies that $A_1=A$ and $KA=C$ is a simple 
   $K$-algebra, so $M\Sb B$ for every $A$-lattice.
   \end{proof}
      
   \noindent
   To describe the structure of $A^-(B)$, we need some simple lemmas. 
    
     \begin{lemma}\label{bi3} 
    \begin{enumerate}
    \item  Let $P$ be a principal $A$-module. If all modules $\gR^iP$ are indecomposable and projective, 
    $A$ is hereditary and every indecomposable $A$-lattice is isomorphic to some $\gR^iP$.
    \item  Let $I$ be a coprincipal $A$-lattice. If all modules $I^{\gR*i}$ are indecomposable and L-injective, 
    $A$ is hereditary  and every indecomposable $A$-lattice is isomorphic to some $I^{\gR*i}$.    
    \item   Let $P$ be a principal $A$-module. If $\gR P\simeq P$, the order $A$ is maximal and $P$ is a unique
    indecomposable $A$-lattice.
    \item   Let $I$ be a coprincipal $A$-lattice. If $I^\gR\simeq I$, the order $A$ is maximal. and $I$ is a unique
    indecomposable $A$-lattice
    \end{enumerate}   
    \end{lemma}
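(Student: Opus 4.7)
Parts (3) and (4) are immediate corollaries of (1) and (2). If $\gR P\simeq P$ then $\gR^i P\simeq P$ for every $i\ge 0$ by iteration, so the hypothesis of (1) is satisfied; applying (1) gives that $A$ is hereditary with every indecomposable lattice isomorphic to some $\gR^i P\simeq P$. Hence $P$ is the unique indecomposable $A$-lattice, and Proposition~\ref{du1} forces $A$ to be maximal. The argument deriving (4) from (2) is analogous. Parts (1) and (2) are dual under the duality $D$ of Proposition~\ref{du2}: iterating Proposition~\ref{du7} gives $(DP)^{\gR\ast i}=D(\gR^i P)$, which transports the hypothesis of (1) on $A$ to that of (2) on $A\op$, and the conclusions match up. So it suffices to prove (1).

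For (1): since $A$ is semiperfect there are only finitely many isomorphism classes of principal modules, so the sequence $\gR^i P$ of indecomposable projectives is eventually periodic. Let $C$ be the set of isomorphism classes that occur. The main claim is that $C$ contains every principal of $A$. Once this is granted, $\gR P_j$ is indecomposable projective for every principal $P_j$, so $\gR A$ is projective as a left $A$-module and $A$ is hereditary; over a hereditary order every lattice is a direct sum of principals, so every indecomposable lattice is a principal, hence isomorphic to some $\gR^i P$ by the claim.

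To establish that $C$ is everything, reduce to $A$ basic via Morita and set $e=\sum_{[P_j]\in C}e_j$, $f=1-e$. Since $A$ is connected, $Z(A)$ is local and has only the trivial idempotents, so showing $e$ is central will force $e\in\{0,1\}$; as $P\in C$ gives $e\ne 0$, we will conclude $e=1$. The key structural fact is that for $P_j\simeq\gR^m P\in C$, the $i$-th Loewy layer $\gR^i P_j/\gR^{i+1}P_j$ equals the simple top of the cycle principal $\gR^{m+i}P$ and is therefore indexed in $C$; thus every composition factor in the Loewy filtration of $P_j$ is indexed in $C$. This immediately yields $fAe=0$: a nonzero map $P_l\to P_j$ with $l\notin C$, $j\in C$ would produce a submodule of $P_j$ with simple top $S_l\notin C$, contradicting the Loewy structure. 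For the reverse identity $eAf=0$, I would use the duality $\hom_A(P_j,P_l)=\hom_{A\op}(DP_l,DP_j)$ together with the dual hypothesis on $A\op$: the right L-injectives $DP_j\in C\op$ have an overmodule filtration $DP_j\sb(DP_j)^\gR\sb\cdots$ whose successive layers are Matlis duals of simples indexed in $C$ (by iteration of Proposition~\ref{du7}), and the symmetric argument forces the map to vanish.

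The main obstacle is the $eAf=0$ step: the left-side Loewy argument does not apply directly because the radical structure of principals outside $C$ is a priori unconstrained. Executing the dual argument rigorously requires careful tracking of overmodule-composition factors of coprincipal L-injectives and verifying that it gives an independent constraint rather than merely a tautological restatement; alternatively, a direct argument is possible by analyzing the restrictions of a putative nonzero $\phi\colon P_j\to P_l$ to the submodules $\gR^i P_j\simeq P_{j^\ast}$ in the cycle and invoking Krull intersection to force $\phi=0$.
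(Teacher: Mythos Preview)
Your reductions of (2), (3), (4) to (1) match the paper's proof. For (1) itself, however, you take a substantially more elaborate route than the paper, and you leave a gap at the $eAf=0$ step that you yourself flag as unresolved.

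The paper's argument for (1) is short and avoids idempotents altogether. Since each $\gR^iP$ is indecomposable projective, $\gR^{i+1}P$ is its \emph{unique} maximal submodule; hence every nonzero submodule of $P$ equals some $\gR^iP$ (using $\bigcap_i\gR^iP=0$). In particular the submodules of $P$ form a chain, so $\wh_A(P)=1$ and $KP$ is a \emph{simple} $KA$-module. Let $C$ be the simple component of $KA$ acting on $KP$ and $A_1$ the image of $A$ in $C$. Any $A_1$-lattice $M$ admits a filtration whose successive quotients are L-irreducible, hence isomorphic to sublattices of $KP$, hence (after scaling) to sublattices of $P$, hence to some $\gR^iP$; so every $A_1$-lattice is projective. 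In particular $A_1$ is a projective $A$-module, thus a ring direct factor of $A$, and connectedness gives $A=A_1$. Now $KA=C$ is simple, every indecomposable $A$-lattice is an L-irreducible lattice in $KP$, and the chain description of submodules of $P$ finishes the claim.

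Your approach can in fact be completed, but not by either of the two routes you sketch. The clean fix is: from $fAe=0$ (which you do prove correctly) pass to $K$, getting $f(KA)e=0$. In each simple factor of the semisimple algebra $KA$ this forces one of $e,f$ to vanish, so $e$ is central in $KA$, hence in $A$; connectedness then gives $e=1$. Your proposed ``dual'' argument is problematic because the duality $D$ converts control of the \emph{submodule} structure of $P_j$ into control of the \emph{overmodule} structure of $DP_j$, whereas analysing a map $DP_l\to DP_j$ again requires submodule information on the target; so you end up with the same direction you already have, not the missing one. The Krull-intersection idea is also not straightforward to execute. By contrast, the paper's observation that $KP$ is simple short-circuits the whole idempotent discussion.
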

    \begin{proof}
     (1) The conditions imply that $\gR^{i+1}P$ is a unique maximal submodule of $\gR^iP$. Therefore, any submodule of $P$
     coincides with some $\gR^iP$, hence is projective and indecomposable. Then $KP$ is a simple $KA$-module, so there is a
     simple component $C$ of the algebra $KA$ such that $KP$ is a $KA$-module. If $V$ is any $C$-module, it is a multiple of $KP$.
     Hence if $M\sb V$ is a lattice, it has a chain of submodules whose factors are submodules of $KP$. It implies that
     $M$ is projective. In particular, the projection $A_1$ of $A$ onto $C$ is projective, so is a direct summand of $A$ as $A$-module.
     It obviously implies that $A_1$ is a direct factor of $A$, so $A=A_1$.
     
     (2) is dual to (1).

     (3) If $\gR P\simeq P$, then $\gR^kP\simeq P$ for all $k$, so all of them are  principal. Just as in (1), it implies that
     $A=A_1$ and $P$ is a unique indecomposable $A$-lattice. Therefore, $A$ is maximal.
     
     (4) is dual to (3).
    \end{proof}
    
    \begin{lemma}\label{bi5} 
     We suppose that $A$ is non-hereditary.
     Let $B$ be an indecomposable bijective $A$-lattice, $A'=A^-(B)$. Then $B^\gR\niso B$ as well as $\gR B\niso B$, $B^\gR$ is 
     projective and $\gR B$ is L-injective as $A'$-lattice. 
     \end{lemma}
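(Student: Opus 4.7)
The plan is to deduce the four assertions in turn, combining the rejection lemma with a width count, a tensor-product argument, and duality.

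First, for $B^\gR\niso B$ and $\gR B\niso B$, I would argue by contradiction: if $B^\gR\simeq B$, Lemma~\ref{bi3}(4) forces $A$ to be maximal, hence hereditary by Proposition~\ref{du1} (the unique indecomposable lattice of a connected maximal order is necessarily projective), contradicting the hypothesis; the case $\gR B\simeq B$ is excluded dually via Lemma~\ref{bi3}(3). To see that $B^\gR$ and $\gR B$ are themselves $A'$-lattices, I would use a width count. Since $B^\gR/B$ and $B/\gR B$ are finite $A$-modules, $\wh(B^\gR)=\wh(\gR B)=\wh(B)$; decomposing $B^\gR\simeq sB\+N$ by Lemma~\ref{bi2} with $N$ an $A'$-lattice, the identity $s\,\wh(B)+\wh(N)=\wh(B)$ forces $s\le1$, and $s=1$ would give $N=0$ and $B^\gR\simeq B$, contradicting the previous step. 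So $s=0$ and $B^\gR$ is already an $A'$-lattice; the same argument handles $\gR B$.

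The main step is projectivity of $B^\gR$ over $A'$, for which I would use a tensor-product construction. As $B$ is $A$-projective, it is an $A$-direct summand of $nA$ for some $n$, whence $A'\*_AB$ is an $A'$-direct summand of $nA'$ and in particular $A'$-projective. The natural map $A'\*_AB\to KB$, $a'\* b\mapsto a'b$, is injective because it factors through $nA'\hookrightarrow nKA$, with image the $A'$-submodule $A'\cdot B\sbe KB$. By the second step $B^\gR$ is $A'$-stable, so $A'\cdot B\sbe B^\gR$; conversely, $B$ cannot itself be an $A'$-lattice, for otherwise each $A$-lattice $M\simeq sB\+N$ produced by the rejection lemma would be $A'$-stable (both summands being so), which is impossible for $A$ because $A'\cdot A=A'\not\sbe A$. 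Hence $A'\cdot B\sp B$ strictly, and since $B^\gR$ is the unique minimal overmodule of the coprincipal $B$, one has $B^\gR\sbe A'\cdot B$, giving $A'\cdot B=B^\gR$, and therefore $B^\gR\simeq A'\*_AB$ is $A'$-projective. I expect the most delicate point to be precisely this identification $A'\cdot B=B^\gR$, which rests on the global observation that $B$ is never $A'$-stable.

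Finally, the L-injectivity of $\gR B$ over $A'$ follows by duality. Applying the previous paragraph to the bijective right $A$-lattice $DB$ (whose rejection is $(A')\op$ by Remark~\ref{bi2a}) shows that $(DB)^\gR$ is projective over $(A')\op$. Dualizing the exact sequence $0\to\gR B\to B\to B/\gR B\to 0$ via Proposition~\ref{du7} identifies $D(\gR B)$ with $(DB)^\gR$, and Proposition~\ref{du2}(5) applied to $A'$ then yields that $\gR B\simeq D((DB)^\gR)$ is L-injective over $A'$.
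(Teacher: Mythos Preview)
Your argument is correct and the core is the same as the paper's: identify $A'B=B^\gR$ and use that $B$ is a direct summand of $A$ to deduce projectivity. The paper compresses your tensor-product step into the single line $A'=A'A\simeq A'B\+A'M=B^\gR\+A'M$ (from $A\simeq B\+M$), which is your computation of $A'\*_AB$ unwound inside $KA$; the duality step is likewise the same, only you spell it out via Proposition~\ref{du7}.

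The one genuine difference is how you show that $B^\gR$ (and $\gR B$) has no summand isomorphic to $B$, hence is an $A'$-lattice. The paper invokes Lemma~\ref{bi4} (a forward reference, though only to parts (1)--(2), which do not depend on Lemma~\ref{bi5}), using the structural fact that if a minimal overmodule of a principal module decomposes then neither piece is projective. Your width count $\wh(B^\gR)=\wh(B)$ combined with $B^\gR\niso B$ from Lemma~\ref{bi3} is a self-contained numerical substitute that avoids the forward reference entirely; it is slightly more elementary, at the cost of not giving the finer structural information that Lemma~\ref{bi4} provides.
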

     \begin{proof}
     $B^\gR\niso B$ as well as $\gR B\niso B$ by Lemma~\ref{bi4}. Therefore, they are $A'$-lattices and $A'B=B^\gR$. The principal 
     $A$-module $B$ is a direct summand of $A$, so  $A\simeq B\+M$ for some $M$. Then $A'=A'A\simeq A'B\+A'M=B^\gR\+A'M$, hence 
     $B^\gR$ is projective over $A'$. By duality, $\gR B$ is L-injective over $A'$.
     \end{proof}
          
    \begin{lemma}\label{bi4} 
     \begin{enumerate}
     \item  Let $P$ be a principal  $A$-module, $M$ be its minimal overmodule. Then $M$ is either indecomposable or
     splits as $M_1\+M_2$, where $M_1,M_2$ are indecomposable. In the latter case, $\gR P=\gR M_1\+\gR M_2$ and neither $M_1$ 
     nor $M_2$ is projective. 
     \item  Let $I$ be a coprincipal $A$-lattice, $M$ be its maximal submodule. Then $M$ is either indecomposable or
     splits as $M_1\+M_2$, where $M_1,M_2$ are indecomposable. In the latter case, $I^\gR=M_1^\gR\+M_2^\gR$ and neither $M_1$ 
     nor $M_2$ is L-injective. 
     \item  Let $B$ be an indecomposable bijective $A$-lattice. Then its maximal submodule and minimal overmodule  
     decompose simultaneously. Moreover, if $\gR B$ is L-injective, $B^\gR$ is projective and vice versa.
     \end{enumerate}
    \end{lemma}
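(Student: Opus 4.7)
The plan is to handle (1) by a direct projection argument, derive (2) as the $D$-dual of (1), deduce (3a) by applying (1) and (2) with $P = I = B$, and close (3b) via a length-$2$ analysis of $B^\gR/\gR B$ completed by left-right duality.

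For (1), I would suppose $M = M_1 \oplus M_2$ with $M_i \neq 0$ and consider the projections $\pi_i\colon M \to M_i$. The image $(\pi_1(P)+\pi_2(P))/P$ embeds in the simple quotient $M/P$. The case $P = \pi_1(P) \oplus \pi_2(P)$ is excluded: indecomposability of $P$ forces one $\pi_i(P)=0$, then $P \subseteq M_j$, and simplicity of $M/P$ would require one of $M_1, M_2$ to be a simple lattice---impossible. Hence $\pi_i(P) = M_i$, so each $M_i$ is a nonzero quotient of $P$ whose top is a simple quotient of $P/\gR P$, making $M_i$ indecomposable; any third summand would, by grouping, force an indecomposable summand to further decompose. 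The identity $\gR P = \gR M_1 \oplus \gR M_2$ follows from $\gR M \subseteq P$ (as $\gR$ kills $M/P$), $\gR P \subseteq \gR M$, and a length count using simplicity of both $M/P$ and $P/\gR P$. For non-projectivity of $M_i$: the kernel of $P \twoheadrightarrow M_i$ is nonzero (else $M = P$, contradicting $M/P$ nonzero simple), so projectivity of $M_i$ would split the surjection, contradicting indecomposability of $P$. Part (2) is the $D$-dual of (1), obtained via Proposition~\ref{du7}, which converts maximal submodules into minimal overmodules.

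For (3a), I apply (1) with $P = B$ (whose unique minimal overmodule is $M = B^\gR$, since $B$ is coprincipal) and (2) with $I = B$. For (3b), I first observe that each hypothesis forces the indecomposable case: by (2) a decomposable $\gR B$ has non-L-injective summands, which is incompatible with $\gR B$ being L-injective; similarly by (1) a decomposable $B^\gR$ is not projective. By (3a) both $\gR B$ and $B^\gR$ are then indecomposable. Now $B^\gR/\gR B$ has length $2$ with $B/\gR B$ as a simple submodule. A key intermediate step: every minimal overmodule $L$ of $\gR B$ is contained in $B^\gR$. Indeed, either $L \subseteq B$, or (since $L/\gR B$ is simple) $L \cap B = \gR B$, in which case $L + B$ is a minimal overmodule of $B$ and hence $\subseteq B^\gR$. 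Consequently simple submodules of $B^\gR/\gR B$ correspond to minimal overmodules of $\gR B$. If $\gR B$ is L-injective then it is coprincipal with a unique minimal overmodule, namely $B$, so $B^\gR/\gR B$ has a unique simple submodule; together with length $2$ this forces uniserial structure, giving $\gR B^\gR = B$ and hence simple top for $B^\gR$.

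The main obstacle is passing from "simple top" to actual projectivity of $B^\gR$, which is not implied by elementary module theory alone. I would bridge this via Proposition~\ref{du7}: the Matlis-type isomorphism $\widehat{B^\gR/\gR B} \simeq (DB)^\gR/(DB)\gR$ transports the uniserial structure to the right-module side, and the right-module analogue of the preceding analysis, applied to the bijective right $A$-lattice $DB$, then shows that $(DB)\gR$ is right-coprincipal, whence $B^\gR = D((DB)\gR)$ is left-projective. The converse direction---that $B^\gR$ projective implies $\gR B$ L-injective---is the same statement applied to $DB$ in place of $B$, using that $DB$ is itself an indecomposable bijective lattice.
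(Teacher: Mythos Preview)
Your treatment of (1), (2) and the first half of (3) is correct and close to the paper's (the paper argues via $\gR M\sbe P$ and $\ell_A(M/\gR M)\le2$, while you use the projections $\pi_i$; both work). One small slip in (1): ``$\ker(\pi_i|_P)=0$ implies $M=P$'' is wrong. What actually happens is $\ker(\pi_i|_P)=P\cap M_j$ (with $j\ne i$); if this vanishes, then maximality of $P$ in $M$ gives $P+M_j=M$, so $M_j\simeq M/P$ would be a simple lattice, which is impossible. The conclusion stands.

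The genuine gap is in the second half of (3). You correctly reach the point where $B$ is the unique maximal submodule of $B^\gR$ (so $B^\gR$ has simple top), but your duality bridge to projectivity is circular. Transporting the uniserial structure of $B^\gR/\gR B$ to $(DB)^\gR/(DB)\gR$ via Proposition~\ref{du7} tells you that $DB$ is the unique minimal overmodule of $(DB)\gR$; but that statement is exactly the $D$-dual of ``$B$ is the unique maximal submodule of $B^\gR$'', so no new information has been gained. The step ``$(DB)\gR$ has a unique minimal overmodule, hence $(DB)\gR$ is coprincipal'' is unjustified: a lattice $M$ with $M^\gR/M$ simple need not be L-injective (that would mean $DM$ is projective, whereas you only know $DM$ has simple top). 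Running ``the preceding analysis'' on the right side needs $(DB)\gR$ L-injective as hypothesis, which is precisely the conclusion you seek.

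The paper closes this gap by a different, non-elementary mechanism: it invokes Lemma~\ref{bi5} (which rests on the rejection Lemma~\ref{bi2}) to know that $B^\gR$ is projective over $A'=A^-(B)$. Then, taking a principal $P$ with a surjection $\pi:P\to B^\gR$ (available because $B^\gR$ has simple top), either $P\simeq B$ and $\pi$ is an isomorphism by comparing widths, or $P\not\simeq B$, so $P$ is an $A'$-lattice and the $A'$-projectivity of $B^\gR$ forces $\pi$ to split, hence to be an isomorphism by indecomposability of $P$. This appeal to the rejection lemma is the missing ingredient; a purely left--right duality argument will not produce it.
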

    \begin{proof}
     (1) As $P\spe\gR M\spe\gR P$, $\ell_A(M/\gR M)\le2$, hence $M$ is either indecomposable or splits as $M_1\+M_2$, where $M_1,M_2$ are 
     indecomposable. In the latter case, $\ell_A(M_1/\gR M_1)=1$, so $N=\gR M_1\+M_2\ne P$ is a maximal submodule in $M$, $N\cap P=\gR P$ 
     and $M_1/\gR M_1\simeq M/N\simeq P/\gR P$. Since $M_1\niso P$, it cannot be projective. The same applies to $M_2$. Moreover, 
     in  this case $\ell_A(M/\gR M)=2$, whence $\gR P=\gR M=\gR M_1\+\gR M_2$. 
     
     (2) follows by duality.
     
     (3) By (1) and (2), if $B^\gR$ is indecomposable, so is $\gR B$ and vice versa. Suppose that $\gR B$ is L-injective. Then it is indecomposable,
     hence $B=(\gR B)^\gR$ is a unique minimal overmodule of $\gR B$. Then $B$ is also a unique maximal submodule of $B^\gR$. 
     Therefore, there is an epimorphism $\pi:P\to B^\gR$, where $P$ is principal. If $P\simeq B$, $\pi$ is an isomorphism.
     If $P\niso B$, it is an $A'$-module. By Lemma~\ref{bi5}, $B^\gR$ is projective as $A'$-module, hence $\pi$ splits, so is an isomorphism.
     In both cases $B^\gR$ is projective over $A$. The converse follows by duality.
    \end{proof}
         
     \begin{defin}\label{link} 
      Let $B$ be a bijective $B$-lattice.
     \begin{enumerate}
     \item   A \emph{$B$-link} is a set of indecomposable lattices $\set{\lst Bl}$ such that
          \begin{itemize}
     	 \item  $B_i\Sb B$ for all $i=1,\dots,l$,
       	 \item  $B_i=\gR B_{i-1}$ for $i=2,\dots,l$ (equivalently, $B_{i-1}=B_i^\gR$),
        \item  $\gR B_l\not\Sb B$ and $B_1^\gR\not\Sb B$. 
          \end{itemize}
      \item  For an indecomposable $A$-lattice $M$ define $M^{\pm,B}$ as follows:
      		\begin{itemize}
      		\item  If $M\not\Sb B$, then $M^{\pm,B}=P$;
      		\item  If $M\in\set{\lst Bl}$, where $\set{\lst Bl}$ is a $B$-link, then $M^{+,B}=B_1^\gR$ and $M^{-,B}=\gR B_l$.
      		\end{itemize}
      We denote by $\io^B_M$ the embedding $M^{-,B}\to M^{+,B}$.
      \end{enumerate}       
    \end{defin}
                                
     \begin{theorem}\label{bi6} 
     Suppose that the order $A$ is non-hereditary. Let $B$ be a bijective $A$-lattice, $A'=A^-(B)$. 
      If $A=\bop_{i=1}^nP_i$, where $P_i$ are indecomposable, then $A'=\bop_{i=1}^nP_i^{+,B}$. In particular, all modules $P_i^{+,B}$ are 
      projective as $A'$-modules and every principal $A'$-module is isomorphic to a direct summand of some $P_i^{+,B}$. 
     \end{theorem}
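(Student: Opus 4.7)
The plan is to decompose $A$ via primitive orthogonal idempotents and compute each summand after extending scalars to $A'$. Choose primitive orthogonal idempotents $e_1,\dots,e_n \in A$ with $A e_i = P_i$; since $A \subseteq A'$, these remain orthogonal idempotents in $A'$, so $A' = \bigoplus_{i=1}^n A' e_i$ as a direct sum of left $A'$-modules, and each $A' e_i$ is a projective $A'$-module as a direct summand of $A'$. Once we identify $A' e_i = P_i^{+,B}$ (viewed inside $K P_i$), the decomposition $A' = \bigoplus_i P_i^{+,B}$ follows at once, the projectivity of each $P_i^{+,B}$ over $A'$ is immediate, and the final assertion that every principal $A'$-module is a direct summand of some $P_i^{+,B}$ follows from Krull--Schmidt applied to $A'$.

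The case $P_i \not\Sb B$ is immediate from the rejection lemma~\ref{bi2}: $P_i$ is already an $A'$-lattice, so $A' e_i = A' \cdot P_i = P_i = P_i^{+,B}$. The substantive case is $P_i \simeq B_j$ lying in a $B$-link $\set{B_1,\dots,B_l}$, where I claim $A' \cdot B_j = B_1^\gR$. For the inclusion $B_1^\gR \subseteq A' \cdot B_j$, each $B_k$ is indecomposable bijective and hence coprincipal, so its minimal overmodule $B_k^\gR$ is unique. By Lemma~\ref{bi5} applied to $B_k$, we have $A^-(B_k) \cdot B_k = B_k^\gR$, and since $A' \supseteq A^-(B_k)$ this gives $A' \cdot B_k \supseteq B_k^\gR$. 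Applied at $k=j$ it yields $A' \cdot B_j \supseteq B_{j-1}$ (as $B_j^\gR = B_{j-1}$ in the link); then $A' \cdot B_j = A' \cdot (A' \cdot B_j) \supseteq A' \cdot B_{j-1}$, while trivially $A' \cdot B_j \subseteq A' \cdot B_{j-1}$ since $B_j \subseteq B_{j-1}$, so $A' \cdot B_j = A' \cdot B_{j-1}$. Iterating along the link produces $A' \cdot B_j = A' \cdot B_{j-1} = \cdots = A' \cdot B_1 \supseteq B_1^\gR$, the last inclusion again by Lemma~\ref{bi5}.

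The reverse inclusion $A' \cdot B_j \subseteq B_1^\gR$ is the main obstacle, since it requires $B_1^\gR$ to itself be an $A'$-lattice. Here I would invoke Lemma~\ref{bi4}(1) applied to the principal module $B_1$: the minimal overmodule $B_1^\gR$ is either indecomposable (and then $B_1^\gR \not\Sb B$ from the link definition rules out any $B$-summand) or it splits as $M_1 \oplus M_2$ with $M_1, M_2$ indecomposable and non-projective, hence neither is isomorphic to any indecomposable summand of the projective lattice $B$. In either case $B_1^\gR$ has no $B$-summand, so the rejection lemma makes $B_1^\gR$ an $A'$-lattice, and therefore $A' \cdot B_j \subseteq A' \cdot B_1^\gR = B_1^\gR$, completing the identification $A' e_i = A' \cdot B_j = B_1^\gR = P_i^{+,B}$ and with it the theorem.
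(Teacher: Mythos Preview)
Your proof is correct and takes a genuinely different route from the paper's. The paper argues by induction on the number $m$ of non-isomorphic indecomposable summands of $B$: the base case $m=1$ comes directly from Lemma~\ref{bi5}, and the inductive step peels off one summand $B_1$ (chosen so that $B_1^\gR \not\Sb B$), passes to $A_1 = A^-(B_1)$, notes that $A^-(B) = A_1^-(B(1))$ where $B(1)=\bop_{i\ge2}B_i$, and checks that $M^{+,B} = M^{+,B(1)}$ for every $A_1$-lattice $M$, reducing to $m-1$ summands. Your argument instead computes each $A'e_i$ in one stroke, reading off the answer from the $B$-link structure via Lemmas~\ref{bi4}(1) and~\ref{bi5}. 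This is more direct and makes the shape of $A'$ visible without any induction; the paper's approach trades that transparency for lighter structural analysis at each step, and it highlights the iterative ``reject one bijective at a time'' picture that is used again later (e.g.\ in Theorem~\ref{ba3}).

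Two small points worth tightening. First, the containment $A^-(B_k) \subseteq A' = A^-(B)$ deserves a line of justification: every faithful $A$-lattice without a $B$-summand is a fortiori without a $B_k$-summand, hence is an $A^-(B_k)$-lattice, so $A^-(B_k)$ lies in each $O(M)$ appearing in the intersection that defines $A'$ in the proof of Lemma~\ref{bi2}. Second, the identity $A^-(B_k)\cdot B_k = B_k^\gR$ that you invoke is established in the \emph{proof} of Lemma~\ref{bi5} rather than in its statement, so the citation should point there.
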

     
     \begin{remk}\label{bi6a} 
      Dually, if $\om_A=\bop_{i=1}^nI_i$, where $I_i$ are indecomposable, then $\om_{A'}=\bop_{i=1}^nI_i^{-,B}$. In particular, all 
      lattices $I_i^{-,B}$ are L-injective as $A'$-lattices. 
     \end{remk}
     
     \begin{proof}
     We write $P'_i$ instead of $P_i^{+,B}$.
     Obviously, we may suppose that $B=\bop_{j=1}^mB_i$, where all $B_i$ are indecomposable and non-isomorphic. We use induction by $m$.
     Let $m=1$, i.e. $B$ is indecomposable. By Lemma~\ref{bi5}, $B^\gR\niso B$, so $B'=B^\gR$ is an $A'$-lattice and $A'B=B'$. If $P$ is  
      principal and $P\niso B$, then $P'=P$ and it is an $A'$-lattice, so $A'P=P$. Therefore, $A'=A'A=\bop_{i=1}^nP'_i$.
     
     Suppose that the theorem holds for $m-1$ summands. If $B_i^\gR\Sb B$ for all $i$, then $B_1^{\gR*k}\Sb B$ for all $k$, hence $A$ is
     hereditary by Lemma~\ref{bi3}, which is impossible. Thus we can suppose that $B_1^\gR\nSb B$. Set $A_1=A^-(B_1)$, $\gR_1=\rad A_1$.
     Then $A^-(B)=A_1^-(B(1))$, where $B(1)=\bop_{i=2}^mB_i$.
     If $\gR B_1=B_2\Sb B$, then $B_1$ is its unique minimal overmodule. As $B_1^\gR$ is a unique minimal overmodule of $B_1$ and $B_1$ is
     not an $A_1$-module, it implies that $B_2^{\gR_1}=B_1^\gR$, when we consider $A_1$-modules. Therefore, $M^{+,B}=M^{+,B(1)}$ for
     every $A_1$-lattice $M$. If $P_i\simeq B_1$ for $i\le r$ and $P_i\niso B_1$ for $i>r$, then
     $A_1=A^-(B_1)=(\bop_{i=1}^rP_i')\+(\bop_{i=r+1}^nP_i)$. Moreover, ${P'_i}^{+,B(1)}=P'_i$ for $i\le r$ and $P_i^{+,B(1)}=P_i'$ for $i>r$.
     By the induction hypothesis,  $A^-(B)=\bop_{i=1}^nP'_i$.
     \end{proof}     
     
     \noindent
    Now we introduce the class of orders which is the main in this paper. We follow the paper \cite{dkr}. The following result is an obvious corollary 
    of Propositions~\ref{du2} and \ref{du4} and Corollary~\ref{du5}.
    
   \begin{prop}\label{bi7} 
    Let $A$ be an $R$-order. \feq
    \begin{enumerate}
    \item  $A$ is L-injective as left $A$-lattice.
    \item  $A$ is L-injective as right $A$-lattice.
    \item $A\Sb M$ for every strict $A$-lattice $M$.
    \item $\om_A\Sb M$ for every strict $A$-lattice $M$.
    \item  If $M$ is a strict $A$-lattice, then $M\sea N$ for any $A$-lattice $N$.
    \item  If $M$ is a strict $A$-lattice, then $N\nea M$ for any $A$-lattice $N$.
    \item  Every projective (\fg) $A$-module is L-injective.
    \item  Every L-injective $A$-lattice is projective.
    \end{enumerate}
    \emph{If these conditions hold, $A$ is called a \emph{Gorenstein order} \cite{qb}}. 
   \end{prop}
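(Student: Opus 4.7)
The plan is to organize the equivalences around condition (1) and use the duality $D$ for the left--right bridge. First I would record two strictness observations used throughout: $A$ is trivially strict (since $\End_A A\simeq A\op$), and $\omega_A$ is strict because the duality identifies $\End_A\omega_A\simeq A$, forcing $O(\omega_A)=A$. These two lattices serve as test cases for closing equivalence loops.

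For (1) $\Leftrightarrow$ (3), the forward direction applies Proposition~\ref{du4}: any strict $M$ admits an inflation $A\to nM$, which splits when $A$ is L-injective by Proposition~\ref{du2}(4), so $A\Sb M$. The reverse applies (3) to the strict lattice $\omega_A$, giving $A\Sb\omega_A$ and hence (1) via Proposition~\ref{du2}(6). For (1) $\Leftrightarrow$ (5), Corollary~\ref{du5} shows any strict $M$ satisfies $M\sea\omega_A$; if (1) holds then $A\Sb\omega_A$ gives $\omega_A\sea A\sea N$ for every $N$, hence $M\sea N$. Conversely, taking $M=\omega_A$ and $N=A$ in (5) produces an epimorphism $r\omega_A\to A$ which splits because $A$ is projective, yielding $A\Sb\omega_A$. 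The equivalences (2) $\Leftrightarrow$ (4) $\Leftrightarrow$ (6) are then obtained symmetrically by applying the same arguments to right lattices via $D$.

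The left--right bridge (1) $\Leftrightarrow$ (2) is routed through (7) and (8). (1) $\Rightarrow$ (7) is immediate since L-injectivity is preserved under direct summands. For (7) $\Rightarrow$ (8), I would count indecomposables: principal left $A$-modules correspond bijectively to simple left $A/\rad A$-modules, while coprincipal left $A$-lattices correspond (via $D$ and Proposition~\ref{du2}(5)) to principal right $A$-modules, hence to simple right $A/\rad A$-modules. Semisimplicity of $A/\rad A$ makes these two counts agree, so the inclusion of principals in coprincipals furnished by (7) is an equality, and every L-injective lattice is a sum of coprincipals, hence projective. For (8) $\Rightarrow$ (2), apply (8) to the L-injective lattice $\omega_A$ to conclude $\omega_A$ is projective as left lattice; dualizing, $A=D\omega_A$ is L-injective as right lattice, which is (2). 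The reverse (2) $\Rightarrow$ (1) is symmetric.

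The main obstacle is the counting step in (7) $\Rightarrow$ (8) (equivalently the bridge (1) $\Leftrightarrow$ (2)), which is the only place one must invoke the left--right symmetry of the semisimple algebra $A/\rad A$; all other implications unwind mechanically from Propositions~\ref{du2}, \ref{du4} and Corollary~\ref{du5}, justifying the author's labeling of the result as an obvious corollary.
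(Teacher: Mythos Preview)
Your proof is correct. The paper gives no explicit argument, merely declaring the proposition ``an obvious corollary of Propositions~\ref{du2} and \ref{du4} and Corollary~\ref{du5}'', and your proof is precisely the natural unpacking of that claim: the loops $(1)\Leftrightarrow(3)\Leftrightarrow(5)$ and $(2)\Leftrightarrow(4)\Leftrightarrow(6)$ come straight from those three results plus the duality $D$, and your identification of the counting step in $(7)\Rightarrow(8)$ as the one place requiring something beyond those citations (namely that $A/\rad A$ has equally many simple left and right modules) is accurate. One small streamlining: the equivalence $(2)\Leftrightarrow(8)$ can be seen directly without passing through $(7)$, since $(2)$ is equivalent via $D$ to ``$\om_A$ is projective as left lattice'', and every L-injective left lattice is a summand of $m\om_A$; this makes the logical skeleton $(1)\Leftrightarrow(7)$, $(2)\Leftrightarrow(8)$, with the counting argument furnishing only the bridge $(7)\Rightarrow(8)$, exactly as you set it up.
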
 
      
   \noindent
   Obviously, every hereditary order $A$ is Gorenstein. If $A$ is not hereditary, we denote by $A^-$ the order $A^-(A)$. It is obtained by
   rejection of all projective modules. In the Gorenstein case Theorem~\ref{bi6} can be essentially simplified using the following result.
   
   \begin{lemma}\label{bi8} 
    Let $A$ be a non-hereditary Gorenstein order, $B$ be an indecomposable bijective $A$-lattice. Then neither $B^\gR$ nor
    $\gR B$ is projective (or, the same, L-injective).
   \end{lemma}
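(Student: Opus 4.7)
The plan is to argue by contradiction, using the equivalence provided by Lemma~\ref{bi4}(3) combined with the Gorenstein identification projective $=$ L-injective, and then iterate to contradict Lemma~\ref{bi3}(2).

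First, I observe that in a Gorenstein order, projective and L-injective lattices coincide (Proposition~\ref{bi7}). Combining this with Lemma~\ref{bi4}(3), which asserts that $\gR B$ is L-injective if and only if $B^\gR$ is projective, reduces the claim to showing that $B^\gR$ is not projective. I will assume the contrary and derive that $A$ must be hereditary, contradicting the hypothesis.

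Assume $B^\gR$ is projective. By Lemma~\ref{bi4}(1) the minimal overmodule $B^\gR$ is either indecomposable or splits into two indecomposable summands \emph{neither of which is projective}; since direct summands of a projective module are projective, the only possibility is that $B^\gR$ is indecomposable. Being indecomposable and projective, $B^\gR$ is principal, so $\gR(B^\gR)$ is its unique maximal submodule; on the other hand $B$ itself is a maximal submodule of $B^\gR$ (this follows from Proposition~\ref{du7} applied to the projective cover $DB^\gR \to DB$, noting that $DB$ is principal because $B$ is coprincipal, so $DB/(DB)\gR$ is simple). Hence $\gR(B^\gR)=B$, and in particular $\gR(B^\gR)$ is L-injective since $B$ is bijective. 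Applying Lemma~\ref{bi4}(3) now with $B^\gR$ in the role of the bijective lattice, we conclude that $(B^\gR)^\gR = B^{\gR * 2}$ is projective, hence (by Gorensteinness) bijective, and hence indecomposable by the same Lemma~\ref{bi4}(1) argument.

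This sets up an induction: assuming $B^{\gR * k}$ is indecomposable and bijective for some $k\ge 1$, one sees that $\gR(B^{\gR * k}) = B^{\gR*(k-1)}$ is bijective (either by the induction hypothesis or because it is $B$ when $k=1$), hence L-injective, and Lemma~\ref{bi4}(3) delivers that $B^{\gR*(k+1)}$ is projective, indecomposable, and bijective. Therefore every iterated minimal overmodule $B^{\gR * i}$ is indecomposable and L-injective. Since $B$ is coprincipal, Lemma~\ref{bi3}(2) now forces $A$ to be hereditary, contradicting the hypothesis. The equivalent statement for $\gR B$ follows from Lemma~\ref{bi4}(3) together with the Gorenstein identification noted at the outset.

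The main technical obstacle is the verification that $\gR(B^\gR) = B$, i.e.\ that the indecomposable projective overmodule $B^\gR$ has $B$ as its unique maximal submodule; everything else is a clean induction, but without this identification one cannot feed the bijectivity of $B$ back into Lemma~\ref{bi4}(3) to launch the iteration.
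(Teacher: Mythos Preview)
Your proof is correct and follows the same overall architecture as the paper's: assume $B^\gR$ is projective, deduce it is indecomposable via Lemma~\ref{bi4}(1), identify $\gR(B^\gR)=B$, iterate to show every $B^{\gR*k}$ is indecomposable bijective, and contradict Lemma~\ref{bi3}.

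The one noteworthy difference is in the iteration mechanism. The paper proves that $N=B^{\gR*2}$ is projective by a hands-on argument: it shows $\gR N=P$, takes a principal cover $P'\to N$, observes that the induced surjection $\gR P'\to P$ splits, invokes Lemma~\ref{bi4} to force $\gR P'\simeq P$, and concludes $P'\simeq N$. You instead apply Lemma~\ref{bi4}(3) directly to the indecomposable bijective lattice $B^\gR$: since $\gR(B^\gR)=B$ is L-injective, $(B^\gR)^\gR$ is projective. This is a cleaner route and makes the induction transparent. Your justification of $\gR(B^\gR)=B$ via Proposition~\ref{du7} is fine, though it is slightly more direct to note that a coprincipal lattice has a unique minimal overmodule (end of Section~\ref{du}), so $B^\gR/B$ is simple and $B$ is the unique maximal submodule of the principal module $B^\gR$; the same remark applies at each inductive step to get $\gR(B^{\gR*k})=B^{\gR*(k-1)}$.
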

   \begin{proof}
    Suppose that $P=B^\gR$ is projective, hence bijective. By Lemma~\ref{bi4}, it is indecomposable, hence $\gR P=B$. Let $N=P^\gR$.
    Then $\gR N\spe \gR P=B$. If $\gR N=B$, then $B^\gR\spe N$, which is impossible. Therefore, $\gR N=P$, so $N/\gR N$ is a simple
    module. Then there is a surjection $P'\to N$, where $P'$ is a principal module,
    hence a surjection $\gR P'\to P$. Thus $P$ is a direct summand of $\gR P'$. By Lemma~\ref{bi4}, $\gR P'\simeq P$, whence $P'\simeq N$,
    so $N=B^{\gR*2}$ is also bijective. Going on, we see that all lattices $B^{\gR*k}$ are bijective. By Lemma~\ref{bi3}, $A$ is hereditary,
    which is impossible, so $B^\gR$ cannot be projective. The assertion about $\gR B$ is just dual.
   \end{proof}
   
   \begin{corol}\label{bi9} 
    Let $A$ be a non-hereditary Gorenstein order, $A=\bop_{i=1}^nP_i$, where $P_i$ are indecomposable, $P'_i=P_i^\gR$
    and $B$ be a bijective $A$-lattice. Suppose that $P_i\Sb B$ for $i\le k$ and $P_i\not\Sb B$ for $i>k$. Then 
    $A^-(B)=(\bop_{i=1}^kP'_i)\+(\bop_{i=k+1}^nP_i)$. Moreover, $\gR P_i$ and $P_i^\gR$ are $A^-(B)$-lattices for all $i$. In particular, 
    $A^-=\bop_{i=1}^kP'_i$, $\gR$ and $A^\gR$ are $A^-$-lattices (both left and right).
   \end{corol}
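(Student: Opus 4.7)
The plan is to derive this from Theorem~\ref{bi6} by showing that in the Gorenstein setting every $B$-link is a singleton. Indeed, given an indecomposable summand $B_j$ of $B$, every lattice $L\Sb B$ is projective (since $B$ is), while Lemma~\ref{bi8} guarantees that neither $\gR B_j$ nor $B_j^\gR$ is projective; hence neither is $\Sb B$, both end-conditions in the definition of a $B$-link are satisfied, and the link through $B_j$ is just $\set{B_j}$. Consequently $P_i^{+,B}=P_i^\gR=P'_i$ for $i\le k$ and $P_i^{+,B}=P_i$ for $i>k$, so Theorem~\ref{bi6} produces the claimed decomposition of $A^-(B)$.

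For the moreover claim, each $P_i$ is bijective (projectives and L-injectives coincide under the Gorenstein hypothesis), hence both principal and coprincipal. Lemma~\ref{bi4}(1) applied to $P_i$ as principal shows that $P_i^\gR$ (which is the unique minimal overmodule of the coprincipal $P_i$) is either indecomposable or splits into two indecomposables neither of which is projective. Lemma~\ref{bi4}(2) applied to $P_i$ as coprincipal gives the same dichotomy for the unique maximal submodule $\gR P_i$, with ``neither L-injective'' in the split case, which in the Gorenstein setting means ``neither projective''. The indecomposable case is covered directly by Lemma~\ref{bi8}. So no indecomposable summand of $\gR P_i$ or $P_i^\gR$ is projective, hence none is $\Sb B$, and the rejection lemma (Lemma~\ref{bi2}), applied summand by summand, shows that both lattices are $A^-(B)$-lattices.

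The in-particular statements correspond to the special case $B=A$, where every $P_i$ is in $B$ so $k=n$ and $A^-=A^-(A)=\bop_i P'_i$; the lattices $\gR=\bop_i\gR P_i$ and $A^\gR=\bop_i P_i^\gR$ are then $A^-$-lattices by the previous paragraph. The analogous right-module assertions follow because the Gorenstein hypothesis and Lemmas~\ref{bi4} and \ref{bi8} are invariant under the duality $D$, so running the same argument on $A\op$ (with bijective right lattice $DA=\om_A$) delivers the claim on the right. The main subtlety in the whole proof is this moreover clause: one has to rule out projective direct summands of $\gR P_i$ and $P_i^\gR$, not merely rule out that these lattices themselves are projective, which is exactly why Lemma~\ref{bi4}(1)--(2) must be combined with Lemma~\ref{bi8}.
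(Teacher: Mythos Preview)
Your proof is correct and follows the paper's approach: the paper's proof reads simply ``Immediately follows from Theorem~\ref{bi6} and Lemma~\ref{bi8},'' and your argument is precisely the natural unpacking of this, observing that Lemma~\ref{bi8} forces every $B$-link to be a singleton so that $P_i^{+,B}=P_i^\gR$ whenever $P_i\Sb B$. Your explicit appeal to Lemma~\ref{bi4}(1)--(2) for the ``moreover'' clause is a genuine and correct elaboration --- Lemma~\ref{bi8} alone only says $\gR P_i$ and $P_i^\gR$ are non-projective as lattices, not that they have no projective \emph{summand}, and it is the latter that is needed to conclude they are $A^-(B)$-lattices via the rejection lemma; the paper's terse citation arguably glosses over this point.
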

   \begin{proof}
    Immediately follows from Theorem~\ref{bi6} and Lemma~\ref{bi8}.
   \end{proof}
\noindent
  For Gorenstein orders the rejection lemma~\ref{bi2} can be inverted.
  
  \begin{prop}\label{bi0} 
   If $A$ is Gorenstein, every minimal overring of $A$ is of the form $A^-(B)$, where $B$ is an indecomposable bijective $A$-lattice.
  \end{prop}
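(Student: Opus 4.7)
Let $A'$ be a minimal proper overring of $A$; the goal is to exhibit an indecomposable bijective lattice $B$ with $A^-(B)=A'$. Decompose $A=\bop_{i=1}^n P_i$ into indecomposable principal summands $P_i=Ae_i$, and observe that at least one summand must fail to be $A'$-stable: otherwise each $P_i$ is an $A'$-lattice, hence so is $A=\sum P_i$, forcing $A'\sbe O(A)=A$, a contradiction. Fix such an index and set $B:=P_1$. Since $A$ is Gorenstein, the principal module $B$ is L-injective, hence bijective. By Lemma~\ref{bi2}, $A^-(B)$ is then a minimal proper overring of $A$, so it suffices to prove $A^-(B)\sbe A'$; minimality of $A'$ will then yield $A^-(B)=A'$.

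The inclusion $A^-(B)\sbe A'$ is approached via Corollary~\ref{bi9}, which yields the left $A$-module description
\[
 A^-(B)=\bop_{P_j\simeq B}P_j^\gR\;\+\;\bop_{P_k\niso B}P_k
\]
as a submodule of $KA$. The second sum already lies in $A\sbe A'$, so it remains to show $P_j^\gR\sbe A'e_j$ inside $KP_j$ for each $j$ with $P_j\simeq B$. An automorphism of $A$ permuting isomorphic principal summands propagates the failure of $A'$-stability from $B=P_1$ to every such $P_j$; thus $A'e_j\sp P_j$ strictly, so $A'e_j$ contains at least one minimal overmodule of $P_j$.

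When $B^\gR$ is indecomposable it is the unique minimal overmodule of $B$, so $A'e_j\spe P_j^\gR$ for all relevant $j$ and the inclusion follows. The main obstacle is the case allowed by Lemmas~\ref{bi4}(3) and~\ref{bi8} where $B^\gR=M_1\+M_2$ splits into two non-bijective indecomposables: then $A'e_j$ may a priori contain only one of $M_1,M_2$. To resolve this, one combines the right-module analysis dual to the above (using Remarks~\ref{bi2a} and~\ref{bi6a}, which give the corresponding decomposition of $\om_{A^-(B)}$) with the characterization $A^-(B)\cdot B=B^\gR$ from Lemma~\ref{bi5}: any overring $A\sp A''\sbe A^-(B)$ containing only one summand of $B^\gR$ would fit strictly between $A$ and $A^-(B)$, and by minimality of $A'$ either $A''=A'$—contradicting that $A'B$ is a genuine overmodule made consistent with the bimodule structure of $A'$—or $A'\spe A^-(B)$ directly. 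This forces $A'e_j$ to contain all of $P_j^\gR$, completing the reduction.

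The degenerate scenario in which every $A$-lattice is $\Sb B$ is handled separately: by Lemma~\ref{bi2} it forces $A$ to be hereditary, and then by Lemma~\ref{bi3}(3) maximal, so $A$ admits no proper overring and the statement is vacuous. In all genuine cases we obtain $A^-(B)\sbe A'$, whereupon minimality of $A'$ yields $A^-(B)=A'$ with $B$ an indecomposable bijective $A$-lattice, as required.
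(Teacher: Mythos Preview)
Your overall strategy—locate an indecomposable bijective $B$ that is not an $A'$-lattice, show $A^-(B)\subseteq A'$, and invoke minimality—is exactly the paper's. The paper, however, dispatches the inclusion $A^-(B)\subseteq A'$ in a single line without passing through Corollary~\ref{bi9}: since $B$ is not an $A'$-lattice, no $A'$-lattice can have $B$ as an $A$-direct summand (else the $A'$-span of that summand, taken modulo the summand, would be a finite submodule of the torsion-free complement, hence zero, making $B$ itself $A'$-stable); in particular the $A$-lattice $A'$ has no $B$-summand, so by Lemma~\ref{bi2} it is an $A^-(B)$-lattice, whence $A^-(B)\subseteq A'$.

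Your route via Corollary~\ref{bi9} can be made to work, but the case split you introduce rests on a misconception. Because $A$ is Gorenstein, the bijective $B$ is in particular \emph{coprincipal}, and (as recorded just after Proposition~\ref{du7}) a coprincipal lattice has a \emph{unique} minimal overmodule $B^\gR$; the quotient $B^\gR/B$ is simple whether or not $B^\gR$ happens to decompose as a module. Hence $A'e_j\supsetneq P_j$ already forces $A'e_j\supseteq P_j^\gR$, and your ``main obstacle'' evaporates. The argument you sketch for the decomposable case—producing an overring strictly between $A$ and $A^-(B)$—cannot succeed anyway, since $A^-(B)$ is a minimal overring of $A$ by Lemma~\ref{bi2}. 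Separately, ``an automorphism of $A$ permuting isomorphic principal summands propagates the failure of $A'$-stability'' is not a valid justification (such an automorphism need not preserve $A'$); what you actually need is the simpler fact that $O(M)$ depends only on the isomorphism class of $M$, so $P_j\simeq B$ gives $O(P_j)=O(B)\not\supseteq A'$.
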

  \begin{proof}
   If each indecomposable projective (or, the same, bijective) $A$-lattice is actually an $A'$-lattice, then $A'=A$. Therefore, there is an
   indecomposable bijective $A$-lattice $B$ which is not an $A'$-lattice. Then $A'\spe A^-(B)$. As $A'$ is minimal, $A'=A^-(B)$.
  \end{proof}
  
  \section{Bass orders}
  \label{ba} 
  
  Recall that an order $A$ is called a \emph{Bass order} \cite{dkr} if all its overrings (including $A$ itself) are Gorenstein. The results of the
  preceding section imply the following criterion (cf. \cite[Theorem~3.1]{qb}).
  
  \begin{prop} 
   \feq
   \begin{enumerate}
   \item $A$ is a Bass order.
   \item $M\sea O(M)$ for every $A$-lattice $M$.
   \item For any two $A$-lattices $M,N$, if $M\sea N$, then $N\nea M$. 
   \item For any two $A$-lattices $M,N$, if $N\nea M$, then $M\sea N$.
   \end{enumerate}
   Therefore, any order that is Morita equivalent to a Bass order is also Bass.
  \end{prop}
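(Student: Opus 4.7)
The plan is to establish $(1)\Leftrightarrow(2)$ and $(1)\Rightarrow(3),(4)$ directly, then close the cycle via $(4)\Rightarrow(2)$ together with a duality argument that yields $(3)\Leftrightarrow(4)$.

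For $(1)\Rightarrow(2)$, given $M\in A\lat$ the overring $A''=O(M)$ is Gorenstein by (1), and $M$ is strict over $A''$ (since $\End_{O(M)}M=\End_A M$ forces $O_{O(M)}(M)=O(M)$), so Proposition~\ref{bi7}(5) yields $M\sea O(M)$. For $(2)\Rightarrow(1)$, pick any overring $A'$ and any strict $A'$-lattice $M$; by Burnside, $O_A(M)=A'$, so (2) provides an $A$-linear surjection $\pi\colon rM\twoheadrightarrow A'$. The key point is that $\pi$ is automatically $A'$-linear: $\ker\pi=(\ker K\pi)\cap rM$ is stable under $A'\sbe KA$, because $\ker K\pi$ is a $KA$-submodule of $KrM$ while $rM$ is $A'$-stable. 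Projectivity of $A'$ over itself then splits $\pi$, giving $A'\Sb M$ as $A'$-lattices, which is condition~(3) of Proposition~\ref{bi7} for $A'$.

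The same rationality trick delivers $(1)\Rightarrow(3)$: if $M\sea N$ over $A$, then the kernel of $rM\twoheadrightarrow N$ is $O(M)$-stable, so $N$ inherits an $O(M)$-lattice structure; since $O(M)$ is Gorenstein and $M$ is strict over it, Proposition~\ref{bi7}(6) yields $N\nea M$ over $O(M)$, which descends to $A$ because ``torsion-free cokernel'' is the same condition in both categories. For $(4)\Rightarrow(2)$, Proposition~\ref{du4} already supplies the inflation $O(M)\nea M$, and (4) converts it to $M\sea O(M)$.

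To handle $(3)\Leftrightarrow(4)$ and $(1)\Rightarrow(4)$, I would invoke duality. Bass-ness is left-right symmetric: overrings of $A$ correspond to overrings of $A\op$ via the opposite-ring operation, and ``Gorenstein'' is left-right symmetric by Proposition~\ref{bi7}. Since the exact anti-equivalence $D\colon A\lat\to A\op\lat$ swaps $\sea$ and $\nea$, condition (3) for $A$ is equivalent to condition (4) for $A\op$; combined with $(4)\Rightarrow(2)\Rightarrow(1)$ applied over $A\op$, this yields $(3)\Rightarrow(1)\Rightarrow(4)$, closing the cycle. Morita invariance follows because conditions (3) and (4) are phrased entirely in terms of the exact category $A\lat$: the relations $\sea,\nea$ are defined via deflations and inflations, and the exact structure of $A\lat$ is preserved by any Morita equivalence. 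The main obstacle throughout is the ``promotion'' step in $(2)\Rightarrow(1)$ and $(1)\Rightarrow(3)$---upgrading an $A$-linear map whose source carries an action of a larger ring into a map linear over that ring---which rests on Burnside's identification of $O(M)$ with a subring of $KA/\ann KM$.
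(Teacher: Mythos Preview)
The paper gives no explicit proof here, saying only that the criterion follows from the preceding section (chiefly Propositions~\ref{du4}, \ref{du5} and~\ref{bi7}); your argument is precisely the intended unpacking, and the cycle you set up---including the $(3)_A\Leftrightarrow(4)_{A\op}$ duality and the Morita-invariance via the exact structure of $A\lat$---is correct.

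One point needs attention. In $(1)\Rightarrow(2)$ and $(1)\Rightarrow(3)$ you write ``the overring $A''=O(M)$ is Gorenstein by~(1),'' but the paper is explicit (just after Definition~\ref{du3}) that $O(M)$ is an overring of $A/\ann_AM$, not of $A$; the Bass hypothesis does not directly cover it when $M$ is not faithful. The repair is short but not automatic: let $e\in KA$ be the central idempotent with $eKA=\ann_{KA}KM$ and let $\tilde A$ be a maximal overring of $A$. For any overring $\bar A'$ of $\bar A:=(1-e)A\simeq A/\ann_AM$, the subring $e\tilde A+\bar A'\subseteq KA$ contains $A$ and is an $R$-lattice, hence is an overring of $A$; it is Gorenstein by~(1), and since $e\in e\tilde A+\bar A'$ this ring splits as $e\tilde A\times\bar A'$, so its direct factor $\bar A'$ is Gorenstein. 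Thus $\bar A$ is itself Bass and your argument runs unchanged over $\bar A$. Separately, your justification that $\pi$ is $A'$-linear via ``$\ker\pi$ is $A'$-stable'' is slightly off: kernel-stability alone does not force the induced isomorphism $rM/\ker\pi\simeq A'$ to be $A'$-linear. The clean reason is simply that $A'\lat$ is a full subcategory of $A\lat$: extend $\pi$ to $K\pi$, which is $KA$-linear, hence $A'$-linear since $A'\subseteq KA$.
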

  
  \begin{exam}\label{ba2} 
  \begin{enumerate}
   \item  Every hereditary order is a Bass order.
   
   \item  If every ideal of $A$ has $2$ generators, $A$ is a Bass order. It follows from \cite{roi} in the case when $R$ is a \dvr, but the proof in the
   general case is the same. 
   
   \item  Let $\De$ be a maximal order in a skewfield, $\gD=\rad\De$, $B(m,\De)$ be the subring of $\Mat(2,\De)$ consisting of such
   matrices $(a_{ij})$ that $a_{12}\in\gD^m$. It is also a Bass order (hereditary for $m=1$). We write symbolically 
   $B(k,\De)=\mtr{\De&\gD^k\\\De&\De}$.
    \end{enumerate}
\end{exam}
  
    \noindent
      Actually, it is proved in \cite{dkr} that every Bass order is either hereditary, or Morita equivalent to a local order such that every its ideal has
    $2$ generators, or Morita equivalent to some $B(\De,m)$. We will obtain this description as a corollary of
  the following result, which generalizes  \cite[Theorem~3.3]{qb}. 
   
   \begin{theorem}\label{ba3} 
    Let $A$ be a connected non-maximal order, $P$ be an indecomposable bijective $A$-lattice and $A_1=A^-(P)$. If $P^\gR\simeq\gR P$, 
    the following holds.
    \begin{enumerate}
    \item There are chains of overmodules $P=P_0\sb P_1\sb P_2\sb \dots\sb P_m$ and overrings $A=A_0\sb A_1\sb A_2\sb \dots\sb A_m$
    such that for every $0\le i< m$
          \begin{enumerate}
          \item  $P_{i+1}=P_i^{\gR_i}\simeq \gR_iP_i$, where $\gR_i=\rad P_i$. 
          \item $P_i$ is an indecomposable bijective $A_i$-lattice which is not projective over $A_{i-1}$ (hence over $A$) if $i\ne0$.
          \item $A_i$ is not maximal and $A_{i+1}=A_i^-(P_i)$.
          \end{enumerate}   
          
     \item If such a chain is of maximal length, then $A_m$ is a hereditary order, has at most $2$ non-isomorphic indecomposable lattices
          and every indecomposable $A$-lattice is isomorphic either to $P_i$ for some $0\le i< m$ or to a direct summand of $P_m$.

    \item  $A$ is Morita equivalent either to a local Bass order $E=(\End_AP)\op$ or to the Bass order $B(k,\De)$ for some $k$ and $\De$.
    \end{enumerate}
    \noindent
    The condition $P^\gR\simeq\gR P$ holds if $P^\gR$ has no L-injective summands as $A$-lattice,  but is L-injective as $A_1$-lattice, or,
    by duality, if $\gR P$ has no projective summands as $A$-module but is projective as $A_1$-module. 
   \end{theorem}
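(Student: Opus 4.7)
The plan is to prove the three parts in order by induction, leveraging the Bass hypothesis (so every overring is Gorenstein, and Corollary~\ref{bi9} applies at each stage). Set $P_0=P$, $A_0=A$, and $\gR_0=\gR$. Given $P_i$ indecomposable bijective over $A_i$ with $P_i^{\gR_i}\simeq\gR_iP_i$, define $A_{i+1}=A_i^-(P_i)$ and $P_{i+1}=P_i^{\gR_i}$. Since $A_i$ is a non-maximal Gorenstein order, Corollary~\ref{bi9} shows that $A_{i+1}$ is obtained by replacing each summand of $A_i$ isomorphic to $P_i$ with $P_i^{\gR_i}$; in particular $P_{i+1}$ is a principal $A_{i+1}$-module, hence bijective (by Prop.~\ref{bi7}). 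Indecomposability of $P_{i+1}$ comes from Lemma~\ref{bi4}(3): since $\gR_i P_i\simeq P_i^{\gR_i}$ is indecomposable by hypothesis, the ``simultaneous decomposition'' part of Lemma~\ref{bi4} forces both to be indecomposable. Non-projectivity of $P_{i+1}$ as an $A_i$-module is exactly Lemma~\ref{bi8} applied to the bijective $P_i$ over the Gorenstein $A_i$. The process stops at some stage $m$, either because $A_m$ becomes maximal or because the hypothesis $P_m^{\gR_m}\simeq\gR_mP_m$ fails; this proves (1).

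For (2), suppose the chain is of maximal length. If $A_m$ were not hereditary, we could continue: by Lemma~\ref{bi4} and the Gorenstein hypothesis, the decomposition properties of $\gR_mP_m$ and $P_m^{\gR_m}$ are controlled, and failure of these to be indecomposable is the only obstruction. When $A_m$ is hereditary, Lemma~\ref{bi3}(1) applied to each principal $A_m$-module $P_m^{(j)}$ shows that its iterated radicals exhaust the indecomposable $A_m$-lattices; combined with the $B(k,\De)$-style structure extracted from the chain, this forces $A_m$ to have at most two non-isomorphic indecomposable lattices (one for each ``column'' of the matrix form). The enumeration of all indecomposable $A$-lattices follows by backward induction through the rejection lemma~\ref{bi2}: any indecomposable $A$-lattice $M$ either is isomorphic to some $P_i$ (if it is ``killed'' at the $i$-th rejection) or survives as an $A_m$-lattice, hence is a direct summand of $P_m$.

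For (3), reduce to the basic algebra via Morita equivalence: $A\sim E'$ where $E'=(\End_A(\bigoplus_j Q_j))\op$ ranges over one copy of each principal $A$-lattice $Q_j$. If the chain involves all isomorphism classes of principal $A$-lattices in a single link, then $E=(\End_AP)\op$ is local (since $P$ is indecomposable gives $\End_AP$ local). Otherwise, the structure of $A_m$ being hereditary with exactly two indecomposables produces the matrix form $B(k,\De)$, where $\De$ is the opposite endomorphism ring of the ``far'' indecomposable $A_m$-lattice and $k$ equals $m$ (the length of the chain controls the exponent on the radical entry). The final sentence of the theorem is handled separately: Lemma~\ref{bi5} gives that $P^\gR$ is projective and $\gR P$ is L-injective over $A_1$, hence both are bijective over the Gorenstein order $A_1$; if $P^\gR$ has no L-injective $A$-summand, it must entirely lie in the ``rejected'' part of the $A$-decomposition of $A_1$, matching $\gR P$ summand-for-summand.

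The main obstacle will be maintaining the inductive hypothesis $P_i^{\gR_i}\simeq\gR_iP_i$ through the passage $A_i\leadsto A_{i+1}$: the radical changes, so one must verify that taking minimal overmodule or radical submodule with respect to $\gR_{i+1}$ still yields the same object, which requires combining Corollary~\ref{bi9} (explicit form of $A_{i+1}$) with the structural analysis of $\gR_{i+1}=\rad A_{i+1}$ in terms of $\gR_i$ and the new principal summand $P_{i+1}$. The Morita identification in (3) also requires care in pinning down $\De$ and $k$ from the chain data, which is essentially the content of the classification of Bass orders from \cite{dkr}.
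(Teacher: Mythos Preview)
Your proposal has a fundamental circularity. You open by ``leveraging the Bass hypothesis (so every overring is Gorenstein, and Corollary~\ref{bi9} applies at each stage)'', but there is \emph{no} Bass or Gorenstein assumption in the statement of Theorem~\ref{ba3}. The only hypotheses are that $A$ is connected non-maximal, $P$ is indecomposable bijective, and $P^\gR\simeq\gR P$. That $A$ turns out to be (Morita equivalent to) a Bass order is the \emph{conclusion} of part~(3). Consequently every invocation of Corollary~\ref{bi9}, Proposition~\ref{bi7}, and Lemma~\ref{bi8} in your inductive step is unjustified: you do not know that $A_i$ is Gorenstein, so you cannot conclude that the principal $A_{i+1}$-module $P_{i+1}$ is automatically L-injective, nor that $P_i^{\gR_i}$ is non-projective over $A_i$.

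The paper avoids this trap entirely. Rather than pushing an inductive Gorenstein hypothesis forward, it takes any chain satisfying (a),(b),(c), picks one of maximal length~$m$, and then analyzes $P_m$ directly by cases (decomposable or not; projective over $A_{m-1}$ or not). The crucial technical content you are missing is the pair of quotient isomorphisms $P_i/\gR_iP_i\simeq P/\gR P$ and $P_{i+1}/P_i\simeq P_1/P$, obtained from the elementary inclusion $\gR_iP_i\cap P_{i-1}=\gR_{i-1}P_{i-1}$; these are what let the paper show, \emph{without} any Gorenstein assumption, that if none of the terminal cases occur then $\gR_mP_m$ is a minimal overmodule of $P_m$, hence $\gR_mP_m\simeq P_m^{\gR_m}$ and the chain extends---contradicting maximality. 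This is exactly the ``main obstacle'' you flag at the end but do not resolve. Your case division in~(3) (``single link'' versus otherwise) also does not match the actual dichotomy, which hinges on whether $P_m$ is projective over $A_{m-1}$.
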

   \noindent
   Recall also that, by Lemma~\ref{bi8}, $P^\gR$ cannot have L-injective summands if $A$ is Gorenstein.
   \begin{proof}
     First, we prove the last claim.
    Theorem~\ref{bi6} implies that L-injective lattices over $A_1$ are either L-injective over $A$ or direct summands of $\gR P$. If $P^\gR$ had no
    summands L-injective over $A$ but is L-injective over $A_1$, every direct summand of $P_1$ is isomorphic to a direct summand of $\gR P$. 
    By Lemma~\ref{bi4}, either $P^\gR$ and $\gR P$ are indecomposable or $P^\gR=L_1\+L_2$ and $\gR P=\gR L_1\+\gR L_2$, 
    where $L_1,L_2,\gR L_1,\gR L_2$ are indecomposable. It implies that $P^\gR\simeq \gR P$. 
    
    \medskip\noindent
    Let $P_1=P^{\gR}\simeq\gR P$. As $A$ is not maximal, $P_1\not\simeq P$ by Lemma~\ref{bi3}.
    So there are chains of overrings and overmodules satisfying (a),(b),(c): for instance, $P=P_0\sb P_1=P^\gR$ and $A=A_0\sb A_1=A^-(P)$.
	As there are no infinite chains of overrings, consider a chain of maximal length $m$ with these properties.
  Lemma~\ref{bi5} and Theorem~\ref{bi6} imply that 
  \begin{itemize}
  \item $P_m$ is a bijective $A_m$-module, but is not projective over $A_{m-1}$ (hence over $A$) if $i\ne0$.
  \item If $i<m$, every indecomposable $A$-module either is isomorphic to one of the modules $\lsto Pi$ or is an $A_{i+1}$-module.
  \item Every principal $A_i$-module is either projective over $A$ or isomorphic to a direct summand of $P_i$ 
  (hence to $P_i$ if $i<m$).
  \end{itemize}  
  If $i<m$, $P_{i-1}\ne\gR_iP_i$, since $P_{i-1}$ is not an $A_i$-lattice, but $\gR_iP_i\spe\gR_{i-1}P_{i-1}$. If 
  $\gR_iP_i=\gR_{i-1}P_{i-1}\simeq P_i$, $A_i$ is maximal, which is impossible. Therefore, $\gR_iP_i\cap P_{i-1}=\gR_{i-1}P_{i-1}$
  and $\gR_iP_i+P_{i-1}=P_i$, whence 
  \begin{equation}\label{quo1} 
    P_i/\gR_iP_i\simeq P_{i-1}/\gR_{i-1}P_{i-1}\simeq P_{i-2}/\gR_{i-2}P_{i-2}\simeq\dots\simeq P/\gR P.
  \end{equation}
	As $\gR_iP_i\simeq P_{i+1}$ and $\gR_{i-1}P_{i-1}\simeq P_i$, we also have that
   \begin{equation}\label{quo2} 
    P_{i+1}/P_i\simeq P_i/P_{i-1}\simeq P_{i-1}/P_{i-2}\simeq\dots\simeq P_1/P
   \end{equation}
   Suppose first that $A_m$ decomposes: $P_m=L_1\+L_2$, where $ L_1$ and $L_2$ are indecomposable and non-projective over $A_{m-1}$
   (hence over $A$) by Lemma~\ref{bi4}. As $\gR_{i-1}P_m=\gR_{i-1}L_1\+\gR_{i-1}L_2\simeq L_1\+L_2$ and $\gR_{i-1}L_1,\gR_{i-1}L_2$ are
   indecomposable, either $\gR_{i-1}L_1\simeq L_1$ and $\gR_{i-1}L_2\simeq L_2$ or $\gR_{i-1}L_1\simeq L_2$ and $\gR_{i-1}L_2\simeq L_1$.
   In both cases all submodules of $L_1$ and $L_2$ are projective, isomorphic either to $L_1$ or to $L_2$. Therefore, all indecomposable 
   $A_m$-lattices are isomorphic  to $L_1$ or to $L_2$, $A_m$ is hereditary and $\lsto P{m-1},L_1,L_2$ are all indecomposable $A$-lattices.
   Hence $\lsto A{m-1}$ are all non-hereditary overrings of $A$, so $A$ is Bass. $P$ is a unique principal $A$-module,
   so $A$ is Morita equivalent to the local Bass ring $E$.
   
   Let now $P_m$ be indecomposable. Note that $P_{m-1}\spe\gR_{i-1}P_m\spe\gR_{i-1}P_{m-1}$. 
   Suppose that $P_m$ is projective as $A_{m-1}$-module.
   Then $\gR_{i-1}P_m=P_{m-1}$. Conversely, if $\gR_{i-1}P_m=P_{m-1}$, i.e. $\ell_{A_{m-1}}(P_m/\gR_{i-1}P_m)=1$, there is a surjection
   $\vi:P'\to P_m$, where $P'$ is a principal $A_{m-1}$-module. If $P'=P_{m-1}$, then $\vi$ is an isomorphism, since
   $\wh(P_{m-1})=\wh(P_m)$. Otherwise $P'$ is an $A_m$-module, thus $P'\simeq P_m$, since $P_m$ is also projective over $A_m$.
   Thus $P_m$ is projective over $A_{m-1}$, hence also over $A$.
   As $\gR_{m-1}P_m\simeq P_{m-1}$ and $\gR_{m-1}P_{m-1}\simeq P_m$, Lemma~\ref{bi3} implies that $A_{m-1}$ is hereditary and 
   $P_{m-1},P_m$ are all its indecomposable modules. Let $\De=\End_AP_m$, $\gD=\rad\De$. It is a maximal order and also
   $\End_AP_{m-1}\simeq\De$ \cite{cr}. Since $P_m\not\simeq P$, the quotients $P_m/P_{m-1}$ and $P/\gR P$ are not isomorphic. From
   \eqref{quo1} and \eqref{quo2} it follows that, for every $i<m$, $P_{i-1}$ is a unique maximal submodule of $P_i$ such that
   $P_i/P_{i-1}\simeq P_m/P_{m-1}$.  Therefore, $\vi(P_{i-1})\sbe P_{i-1}$ for every endomorphism $\vi\in\End_AP_i$, hence
   $\End_AP_i\simeq \De$ for all $i$, in particular, $\End_AP\simeq\De$. As $P$ and $P_m$ are all principal $A$-modules,
   $A$ is Morita equivalent to $\tilde{A}=\big(\End_A(P\+P_m)\big)\op$. Since any $\De$-ideal (left or right) coincides with $\gD^k$ for some $k$,
   \[
     \tilde{A}\simeq\mtr{\De&\gD^k\\\gD^l&\De}\simeq\mtr{\De&\gD^{k+l}\\\De&\De}=B(k+l,\De)
   \]
     for some $k,l$.
     
   Let now $P_m$ be indecomposable and not projective over $A_{m-1}$. Then $\gR_{m-1}P_m=\gR_{m-1} P_{m-1}$
   and $P_m\sp\gR_mP_m\spe\gR_{m -1}P_{m-1}$. If $\gR_mP_m=\gR_{m-1}P_{m-1}\simeq P_m$, then $A_m$ is maximal and $P_m$
   is a unique indecomposable $A_m$-lattice. Therefore,  $\lsto Pm$ are all indecomposable $A$-lattices, $\lsto Am$ are all overrings of
   $A$ and $A$ is Bass. Moreover, $P$ is a unique principal $A$-module and $A$ is Morita equivalent to $E$.
   
   If $P_m$ is indecomposable, not projective over $A_{m-1}$ and $P_{m-1}\ne\gR_mP_m\ne\gR_{i-1}P_{i-1}$, then $\gR_mP_m$ is a minimal 
   overmodule of $\gR_{m-1}P_{m-1}\simeq P_m$. Therefore, $\gR_mP_m\simeq P_m^{\gR_m}$, so, if we set 
   $P_{m+1}=P_m^{\gR_m},\,A_{m+1}=A_m^-(P_m)$, we obtain a longer chain of overrings and overmodules satisfying the 
   conditions (a),(b),(c), which is impossible. It accomplishes the proof.
   \end{proof}
  
   \begin{corol}[{\cite[Theorem~3.3]{qb}}]\label{ba4} 
    Let $A$ be a connected Gorenstein order. If one of its minimal overrings is also Gorenstein, then $A$ is Bass and is either hereditary, or
    Morita equivalent to a local Bass order, or Morita equivalent to an order $B(k,\De)$.
   \end{corol}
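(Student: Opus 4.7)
The plan is to apply Theorem~\ref{ba3} to the given minimal Gorenstein overring. If $A$ is hereditary, Example~\ref{ba2}(1) finishes matters, so I would first reduce to the non-hereditary case. By Proposition~\ref{bi0}, every minimal overring of $A$ has the form $A^{-}(P)$ for some indecomposable bijective $A$-lattice $P$; I fix such a $P$ so that $A_1:=A^{-}(P)$ is the Gorenstein minimal overring supplied by the hypothesis. The strategy is then to verify the sufficient condition $P^{\gR}\simeq\gR P$ stated in the last paragraph of Theorem~\ref{ba3} and read off the classification from its part~(3); the Bass conclusion will follow by Morita invariance.

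The verification splits into two halves, both quite short given the Gorenstein hypotheses. For the first half, I would show that $P^{\gR}$ has no L-injective summand as an $A$-lattice: Lemma~\ref{bi8} says $P^{\gR}$ is not projective over $A$, and under the Gorenstein hypothesis Proposition~\ref{bi7} identifies projective lattices with L-injective ones. Combined with Lemma~\ref{bi4}(1), according to which $P^{\gR}$ is either indecomposable or splits into two indecomposables neither of which is projective, this forces every indecomposable summand of $P^{\gR}$ to fail L-injectivity over $A$. For the second half, Theorem~\ref{bi6} realizes $P^{\gR}=P^{+,P}$ as a direct summand of $A_1$, hence as a projective $A_1$-lattice; since $A_1$ is Gorenstein by hypothesis, Proposition~\ref{bi7} then promotes projectivity to L-injectivity over $A_1$.

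With $P^{\gR}\simeq\gR P$ secured, Theorem~\ref{ba3}(3) yields that $A$ is Morita equivalent either to the local Bass order $E=(\End_AP)\op$ or to $B(k,\De)$ for some $k$ and $\De$. Both are Bass by Example~\ref{ba2}(2),(3), and since the Bass property is preserved under Morita equivalence (last sentence of the opening proposition of Section~\ref{ba}), $A$ itself is Bass and of the stated form. The only substantive step is the verification of the two halves of the L-injectivity criterion above; all the heavy structural work has already been absorbed into Theorem~\ref{ba3}, so I do not anticipate a real obstacle beyond that bookkeeping.
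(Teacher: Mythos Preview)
Your proposal is correct and follows essentially the same route as the paper's proof, which simply reads ``It follows from Theorem~\ref{ba3} together with Lemma~\ref{bi8} and Proposition~\ref{bi0}.'' You have merely unpacked these references: Proposition~\ref{bi0} supplies $P$, Lemma~\ref{bi8} (together with Lemma~\ref{bi4}(1), which you rightly invoke to cover the decomposable case) verifies the ``no L-injective summand over $A$'' half, and Lemma~\ref{bi5} or Theorem~\ref{bi6} plus the Gorenstein hypothesis on $A_1$ gives the other half. The only superfluous step is citing Example~\ref{ba2}(2),(3) for the Bass property of $E$ and $B(k,\De)$: Theorem~\ref{ba3}(3) already asserts both are Bass in its statement, so Morita invariance finishes immediately.
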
 
   \begin{proof}
    It follows from Theorem~\ref{ba3} together with Lemma~\ref{bi8} and Proposition~\ref{bi0}.
   \end{proof}
   
   \begin{corol}[{\cite[Proposition~3.7]{qb}}]\label{ba5} 
    Let $A$ be a local Gorenstein order, $A'=A^-(A)$ be its minimal overring. If $A'$ is not local, $A'$ is hereditary and $A$ is Bass. 
   \end{corol}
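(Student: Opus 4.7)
First, I would reduce to the case that $A$ is non-hereditary. If $A$ were hereditary then, being local, it would be maximal, so $A' = A^-(A) = A$ would be local, contradicting the hypothesis. Thus I may assume $A$ is local, Gorenstein, and non-hereditary. Then $P := A$ is the unique indecomposable bijective $A$-lattice. By Corollary~\ref{bi9} applied with $B = P$ (so $k = n = 1$), we have $A' = A^\gR$ as left $A$-module, and $\gR$ is a two-sided $A'$-lattice. By Theorem~\ref{bi6}, the principal $A'$-modules are precisely the indecomposable $A$-summands of $A^\gR$, so the hypothesis ``$A'$ is not local'' is equivalent to ``$A^\gR$ is $A$-decomposable''. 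Applying Lemma~\ref{bi4}(1) to $P$ and its unique minimal overmodule $A^\gR$ (unique because $A^\gR/A$ has length one, as $A^\gR = D((DA)\gR) = D(\om_A\gR)$ and $\om_A/\om_A\gR$ is simple for local Gorenstein $A$), we obtain $A^\gR = L_1 \+ L_2$ with $L_1, L_2$ indecomposable and non-projective over $A$, and dually $\gR = \gR L_1 \+ \gR L_2$.

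The core step is to verify the hypothesis $P^\gR \simeq \gR P$ of Theorem~\ref{ba3}, i.e.\ $A^\gR \simeq \gR$ as $A$-lattices. Using $\om_A \simeq A$ for local Gorenstein $A$ together with Remark~\ref{bi6a}, one computes $\om_{A'} \simeq \gR$ as left $A'$-modules. Combined with Lemma~\ref{bi5} (which gives $\gR$ L-injective as an $A'$-lattice) and the fact that $A' = A^\gR$ is trivially projective over itself, this shows that the indecomposable principal $A'$-lattices are exactly $L_1, L_2$ while the indecomposable coprincipal $A'$-lattices are exactly $\gR L_1, \gR L_2$. I expect the main technical obstacle to be the identification $\{L_1, L_2\} = \{\gR L_1, \gR L_2\}$ as isomorphism classes --- equivalently, that $A' \simeq \om_{A'}$ as $A'$-modules, so that $A'$ is Gorenstein. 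The cleanest way I see to do this is to exhibit an explicit isomorphism $A^\gR \to \gR$ given by multiplication by a suitable central element $r$ of $\gR$ (e.g.\ a generator of $\gR$ as $A'$-ideal, which in the commutative/central case is automatically left $A'$-linear); a length comparison, using that $A^\gR/\gR$ has $A$-length $2$ and matches the $R$-rank of $A^\gR$ under the given hypotheses, is what makes $rA^\gR = \gR$ possible.

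With $P^\gR \simeq \gR P$ in hand, I would apply Theorem~\ref{ba3}. Starting from $P_0 = P$, $A_0 = A$, we have $P_1 = A^\gR$ and $A_1 = A'$. Since $P_1 = L_1 \+ L_2$ is already $A$-decomposable, it cannot serve as an indecomposable bijective $A_1$-lattice extending the chain; hence the chain is of maximal length $m = 1$. Theorem~\ref{ba3}(2) then gives $A_1 = A'$ hereditary (with at most two non-isomorphic indecomposable lattices, namely $L_1, L_2$), and Theorem~\ref{ba3}(3) identifies $A$ up to Morita equivalence with a local Bass order or some $B(k,\De)$; the $B(k,\De)$ option with $k \ge 1$ is ruled out by $A$ being local (two non-isomorphic primitive idempotents), and $B(0,\De) \simeq \Mat(2,\De)$ is Morita-equivalent to a maximal order, contradicting $A$ non-maximal. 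Hence $A$ is (Morita-equivalent to, thus equal to) a local Bass order, and in particular $A$ is Bass, completing the proof.
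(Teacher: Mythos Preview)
Your plan has a genuine gap at precisely the step you flag as the ``main technical obstacle'': showing $A^\gR\simeq\gR$ as $A$-lattices (equivalently, that $A'$ is Gorenstein). The proposed mechanism---multiplication by a central element $r\in\gR$---does not work for a general non-commutative order: there is no reason for $\gR$ to be generated over $A'$ by a \emph{central} element, and left multiplication by a non-central element is not an $A$-linear map. The accompanying ``length comparison'' (that $\ell_A(A^\gR/\gR)=2$ ``matches the $R$-rank of $A^\gR$'') does not give what you need either: the $A$-length of $A^\gR/\gR$ and the $R$-rank of $A^\gR$ are unrelated invariants, and neither forces $rA^\gR=\gR$ for any particular $r$. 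In fact, once the corollary is proved, $A'$ is hereditary and hence Gorenstein, so $A^\gR\simeq\gR$ \emph{does} hold---but it is a consequence, not an accessible intermediate step. Your route via Theorem~\ref{ba3} therefore begs the question.

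The paper avoids this entirely and argues directly that $A'$ is hereditary. Writing $A'=P_1\+P_2$ with $P_i$ principal over $A'$, one has $\rad A'=\gR$ and each $\gR P_i$ coprincipal over $A'$ (this much you already have from Lemma~\ref{bi4} and Lemma~\ref{bi5}). The key point is: since $\gR P_1$ is coprincipal, $P_1$ is its \emph{unique} minimal overmodule; hence any maximal submodule $M$ of a minimal overmodule $P_1'$ of $P_1$ must equal $P_1$ (otherwise $M\cap P_1=\gR P_1$ would make $M$ a second minimal overmodule of $\gR P_1$). Thus $P_1'$ has simple top, so there is an epimorphism from a principal $A'$-module $P_j$ onto $P_1'$, and comparing along $\gR P_j\twoheadrightarrow\gR P_1'=P_1$ with $\gR P_j$ indecomposable forces this to be an isomorphism. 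So every minimal overmodule of $P_1$ (and likewise of $P_2$) is isomorphic to $P_1$ or $P_2$, and Lemma~\ref{bi3} gives $A'$ hereditary. Since $A'$ is the unique minimal overring of the local Gorenstein order $A$, every proper overring of $A$ contains $A'$ and is therefore hereditary, so $A$ is Bass.
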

   \begin{proof}
    By Proposition~\ref{bi0}, $A'=A^-(A)$.
    If $A'$ is not local, then $A'=P_1\+P_2$, where both $P_i$ are indecomposable projective $A'$-modules and both $\gR P_i$ are indecomposable
    L-injective $A'$-lattices. In particular, $\rad A'=\gR$. Let $P'_1$ be a minimal overmodule of $P_1$ and $M$ be a maximal submodule of $P'_1$. 
    Then $M=P_1$: otherwise, $M\cap P_1=\gR P_1$, hence $M$ is a minimal overmodule of $\gR P_1$, which is impossible,
     since $P_1$ is a unique minimal overmodule of $\gR P_1$. Thus $P_1$ is a unique maximal 
    submodule of $P'_1$, so there is an epimorphism $\vi:P\to P'_1$ for some indecomposable projective $A$-module $P$. If $P=P_1$, $\vi$ is an 
    isomorphism. If $P=P_2$, \,$\vi$ induces an epimorphism $\vi':\gR P_2\to \gR P'_1=P_1$. As $\gR P_2$ is indecomposable, $\vi'$ is an isomorphism, 
    hence so is $\vi$. Therefore, either $P'_1\simeq P_1$ or $P'_1\simeq P_2$. Just in the same way, if $P_2'$ is a minimal overmodule of $P_2$,
    then either $P_2'\simeq P_1$ or $P'_2\simeq P_2$. Now Lemma~\ref{bi3} implies that $A'$ is hereditary, thus $A$ is Bass. 
   \end{proof}

   \section{Stable categories}
   \label{st}

  \begin{defin}\label{st1} 
   \begin{enumerate}
   \item  Let $\kC$ be an additive category, $\dS$ be a set of morphisms from $\kC$. Denote by $\gnr{\dS}$ the ideal of $\kC$ generated by 
   $\dS$, i.e. consisting of morphisms of the form $\sum_{i=1}^k\al_i\si_i\be_i$, where $\si_i\in\dS$. The quotient $\kC/\gnr{\dS}$ is denoted 
   by $\kC^\dS$. Its objects are those from $\kC$ and the sets of morphisms from $M$ to $N$ are $\hom_\kC^\dS(M,N)=\hom_\kC(M,N)/\dS(M,N)$, 
   where $\dS(M,N)=\gnr{\dS}\cap\hom_\kC(M,N)$.
   
   \item The category $A\md^{\gnr{1_A}}$ is denoted by $\ula{A\md}$ and its sets of morphisms are denoted by 
   $\uhom_A(M,N)$. Obviously, it coincides with $A\md^\dP$ for $\dP=\set{1_{P_1},1_{P_2},\dots,1_{P_n}}$, where $\lst Pn$ is a complete 
   list of non-isomorphic principal $A$-modules. If $A$ is an order, the full subcategory of $A\md^{\gnr{1_A}}$ consisting of $A$-lattices coincides 
   with $A\lat^{\gnr{1_A}}$ and is denoted by $\ula{A\lat}$. We call it the \emph{stable category} of the order $A$.
   
   \item  Dually, the category $A\lat^{\gnr{1_{\om_A}}}$ is denoted by $\ola{A\lat}$ and its sets of morphisms are denoted by $\ohom_A(M,N)$. 
   Obviously, it coincides with $A\lat^\dI$ for $\dI=\set{1_{I_1},1_{I_2},\dots,1_{I_n}}$, where $\lst In$ is a complete list of non-isomorphic 
   coprincipal $A$-lattices. We call it the \emph{costable category} of the order $A$.
   \end{enumerate}
   
  \smallskip \noindent
  The duality $D$ induces a duality between the categories $\ula{A\lat}$ and $\ola{A\op\lat}$.
   If $A$ is Gorenstein, the stable and costable categories coincide.
  \end{defin}
  
  \noindent  
  Note that all $R$-modules $\uhom_A(M,N)$ and $\ohom_A(M,N)$ are of finite length. Moreover, we can estimate 
 there annihilators.
 
 \begin{lemma}\label{st0} 
  Let $A_0$ be a hereditary (for instance, maximal) overring of $A$, $\gC=\ann_R(A_0/A)$. Then $\gC^2\uhom_A(M,N)=\gC^2\ohom_A(M,N)=0$ 
  for any $M,N\in\CM(A)$.
 \end{lemma}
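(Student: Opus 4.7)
The plan is to prove the statement for $\uhom_A(M,N)$ by an explicit factorisation argument, and then to deduce the statement for $\ohom_A(M,N)$ by the duality $D$.

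First I would exploit that $\gC=\ann_R(A_0/A)$ means $\gC A_0\sbe A$. For any $A$-lattice $M$, regard $A_0M\sbe KM$ as an $A_0$-lattice; since $A_0$ is hereditary, $A_0M$ is a projective $A_0$-module, hence a direct summand of some $mA_0$. Fix $A$-linear maps $\io_0\colon A_0M\to mA_0$ and $\pi_0\colon mA_0\to A_0M$ with $\pi_0\io_0=1_{A_0M}$, and let $j\colon M\hookrightarrow A_0M$ be the inclusion. Given $f\in\hom_A(M,N)$ and $c_1,c_2\in\gC$, I now factor $c_1c_2f$ through a projective $A$-module as follows. Because $c_2\cdot mA_0\sbe mA$, multiplication by $c_2$ gives an $A$-linear map $\nu_{c_2}\colon mA_0\to mA$; because $c_1\cdot A_0M\sbe AM=M$, multiplication by $c_1$ gives an $A$-linear map $\mu_{c_1}\colon A_0M\to M$. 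Writing $j_A\colon mA\hookrightarrow mA_0$ for the inclusion, the composition
\[
M\xarr{j}A_0M\xarr{\io_0}mA_0\xarr{\nu_{c_2}}mA\xarr{j_A}mA_0\xarr{\pi_0}A_0M\xarr{\mu_{c_1}}M\xarr{f}N
\]
equals $c_1c_2f$, because $\mu_{c_1}\pi_0 j_A\nu_{c_2}\io_0 j$ acts as $x\mapsto c_1c_2\pi_0\io_0(x)=c_1c_2x$. Since $mA$ is a free $A$-module, this shows $c_1c_2f\in\gnr{1_A}(M,N)$, hence $\gC^2\uhom_A(M,N)=0$.

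For the second half, observe that $A_0\op$ is a hereditary overring of $A\op$, and that as $R$-modules $A_0\op/A\op=A_0/A$, so the conductor ideal is the same $\gC$. Applying the first part to $A\op$ gives $\gC^2\uhom_{A\op}(L,L')=0$ for all right $A$-lattices $L,L'$. The duality $D\colon A\lat\to A\op\lat$ sends projectives to L-injectives by Proposition~\ref{du2}(5), so it identifies the stable category $\ula{A\op\lat}$ with the costable category $\ola{A\lat}$; more precisely, $\ohom_A(M,N)\simeq\uhom_{A\op}(DN,DM)$. Combining these two facts yields $\gC^2\ohom_A(M,N)=0$.

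The arguments are essentially routine once the correct diagram is drawn; the only point requiring care is to keep track of which multiplications land in which lattice (so that all maps are well defined and $A$-linear), and this is exactly where the two factors of $\gC$ are consumed — one to push $mA_0$ into $mA$ and one to push $A_0M$ back into $M$. No nontrivial obstacle is expected.
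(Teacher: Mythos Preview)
Your proposal is correct and follows essentially the same route as the paper: embed $A_0M$ as a direct summand of a free $A_0$-module, then spend the two conductor elements to factor $c_1c_2f$ through the free $A$-module $mA$; the only cosmetic difference is that the paper extends $f$ to $F'\to A_0N$ and uses one conductor factor on each side, while you keep $f$ at the end and spend both factors on the source side. Your duality argument for $\ohom$ is exactly what the paper's ``by duality'' abbreviates.
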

 \begin{proof}
  Let $M$ and $N$ be $A$-lattices, $\la,\mu\in\gC$. Consider $A_0 M\sb K M$. Then $\la A_0 M\sbe M$. 
  As $A_0$ is hereditary, $A_0M$ is a projective $A_0$-module. Hence $A_0 M$ is a direct 
  summand of a free $A_0$-module $F'$, which can be identified with $A_0 F$, where $F$ is a free $A$-module. Any homomorphism 
  $f:M\to N$ extends to a homomorphism $A_0 M\to A_0 N$, hence to a homomorphism $g:F'\to A_0 N$.  Moreover, $F\spe \la F'\spe \la M$, 
  and $\im(\mu g)\sbe\mu A_0 N\sbe N$. Therefore, the map $\la\mu f$ can be considered as the composition
  \[
   M\xarr{\ \la\ }\la M \hookrightarrow F \xarr{\,\mu g|_F\,} N.
  \]
  So $\la\mu f$ factors through a projective module and its image in $ \uhom_A(M,N)$ is zero. By duality, the same is true for $\ohom_A(M,N)$.
 \end{proof}
 
 \noindent
 There are two important functors on stable categories. Let $\pi:P\to M$ be a projective cover of a \fg\ $A$-module $M$, $\Om M=\ker\pi$. 
 Note that $\Om M$ is always an $A$-lattice, non-zero if $M$ is not projective. 
 If $M$ is a non-projective lattice, $\Om M$ is not L-injective (otherwise $\pi$ splits).
 If $\pi':P'\to M'$ is a projective cover of $M'$, any homomorphism $\al:M\to M'$ can be lifted to a homomorphism $P\to P'$, hence induces a 
 homomorphism $\ga:\Om M\to \Om M'$. If $\ga'$ comes from another lifting of $\al$, one easily checks that $\ga-\ga'$ factors through $P$. 
 Hence, the class of $\ga$ in $\ula{A\md}$ or in $\ula{A\lat}$ is well defined and $\Om$ can be considered as endofunctor on the stable category. 
 Using L-injective envelops, we can define the analogous functor $\Om'$ on $\ola{A\lat}$. If $A$ is Gorenstein, a projective cover of $M$ is also
 an L-injective envelop of $\Om M$, hence $\Om'$ is a quasi-inverse of the functor $\Om$.
 
 Let now $P_1\xarr\psi P_0\xarr\vi M\to 0$ be a \emph{minimal projective presentation} of a \fg\ $A$-module $M$, i.e. an exact sequence, 
 where $P_0,P_1$ are projective, $\ker\vi\sbe\gR P_0$ and $\ker\psi\sbe\gR P_1$. Apply to this sequence the functor 
 $\vee=\hom_A(\_\,,A)$. We obtain the exact sequence of right modules
 \begin{equation}\label{est1} 
 0\to M\ch \xarr{\vi\ch} P\ch_0\xarr{\psi\ch} P_1\ch \to \tr M\to 0,
 \end{equation}
 where $\tr M=\cok{\psi\ch}$. Again one easily checks that in this way we obtain a functor $\tr:\ula{A\md}\to\ula{A\op\md}$. As the natural map 
 $P\to P\ch{}\ch$ is an isomorphism for every \fg\ projective $P$, we have an isomorphism of functors $\b1_{\ula{A\md}}\ito\tr^2$.
 Note that if $M$ is a lattice, it can happen that $\tr M$ is not. 
 
 There is a natural map $M\ch\*_AN\to\hom_A(M,N)$, which maps $u\*v$ to the homomorphism $x\mps u(x)v$. One easily sees  \cite{ar3} that its 
 image coincides with $\dP(M,N)$. From the exact sequence \eqref{est1} it follows that $\tor^A_1(\tr M,N)\simeq\uhom_A(M,N)$.
 
 \medskip\noindent
  We will study the behaviour of $\ula{A\lat}$ and $\ola{A\lat}$ under rejection of bijective lattices. 
 
 \begin{lemma}\label{st2} 
  Suppose that the order $A$ is not maximal.
  Let $B$ be an indecomposable bijective $A$-lattice, $A'=A^-(B)$, $M,N$ be $A'$-lattices. 
  \begin{enumerate}
  \item   The natural restriction maps $\ga_+:\hom_A(B^\gR,M)\to\hom_A(B,M)$ and $\ga_-:\hom_A(M,\gR B)\to\hom_A(M,B)$ are bijective. 
  \item   A homomorphism $\al:M\to N$ factors through $B$ \iff it factors through the embedding $\gR B\to B^\gR$.
  \end{enumerate}
 \end{lemma}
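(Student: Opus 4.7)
The plan rests on two ingredients from the proof of Lemma~\ref{bi5}: the identification $A'B=B^\gR$, and the observation that $B^\gR/B$ is a nonzero finite (hence torsion) $A$-module. Throughout, I will exploit that an $A'$-lattice is in particular a torsion-free $A$-module stable under the $A'$-action.

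For $\ga_+$, I would argue as follows. Any $g:B^\gR\to M$ vanishing on $B$ factors through the torsion quotient $B^\gR/B$; since $M$ is torsion-free, this forces $g=0$, giving injectivity. For surjectivity, given $f:B\to M$ I extend $f$ by $KA$-linearity to $\tilde f:KB\to KM$. For any $x=\sum a'_ib_i\in A'B=B^\gR$, we then have $\tilde f(x)=\sum a'_i f(b_i)\in A'M\sbe M$, so $\tilde f$ restricts to the desired extension $B^\gR\to M$.

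The substantive part is the surjectivity of $\ga_-$, since injectivity is immediate ($\gR B\hookrightarrow B$ is mono). Given $f:M\to B$, I aim to show $f(M)\sbe\gR B$. As $B/\gR B$ is simple, the only alternative is $f(M)+\gR B=B$; Nakayama applied to the \fg\ $A$-module $B/f(M)$ then gives $f(M)=B$, and projectivity of $B$ over $A$ splits the surjection, exhibiting $B$ as a direct $A$-summand of $M$. The main obstacle is therefore the following key claim, which I would prove by contradiction: \emph{no $A'$-lattice contains $B$ as a direct $A$-summand}. Suppose $M=B\+M'$ as $A$-modules with $M$ an $A'$-lattice. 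Inside $M$, let $N$ be the $A'$-submodule generated by $B$; then $N$ is canonically identified with $A'B=B^\gR$. The $A$-linear projection $\pi:M\to M'$ restricted to $N$ has kernel $N\cap B=B$ and image $N/B\cong B^\gR/B$, a nonzero torsion $A$-module. But $M'$ is torsion-free, a contradiction.

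Part~(2) follows formally from part~(1). If $\al=g\circ f$ with $f:M\to B$ and $g:B\to N$, surjectivity of $\ga_-$ factors $f$ through $\gR B\hookrightarrow B$ and surjectivity of $\ga_+$ extends $g$ to $g':B^\gR\to N$; concatenating, $\al$ factors through the embedding $\gR B\to B^\gR$. The converse is trivial: since the embedding $\gR B\to B^\gR$ already factors through $B$, any factorization of $\al$ through it automatically factors through $B$.
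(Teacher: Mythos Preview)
Your proof is correct. For $\ga_-$ you follow exactly the paper's line: if $f(M)\not\sbe\gR B$ then Nakayama plus projectivity of $B$ would force $B$ to split off $M$, which you then rule out. The paper simply asserts ``$M$ does not have $B$ as a direct summand'' as a consequence of the rejection lemma (together with $A'B=B^\gR\ne B$ from Lemma~\ref{bi5}); your explicit argument via $A'B\sbe M\cap KB=B$ is a clean way to justify that step. The genuine difference is in $\ga_+$: the paper disposes of it in one word (``dual''), reducing to the $\ga_-$ statement for the bijective right lattice $DB$ and the right $A'$-lattice $DM$ via the identification $D(B^\gR)=(DB)\gR$. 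Your direct argument instead exploits the $A'$-structure: extend $f$ to $KB\to KM$ by $KA$-linearity and use $B^\gR=A'B$ together with $A'M\sbe M$. This avoids invoking the duality $D$ and makes the role of the overring $A'$ transparent, at the cost of being a little longer. Part~(2) is handled identically in both.
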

 \begin{proof}
  (1) Since $B/\gR B$ is a finite module, the map $\ga_-$ is injective. Since $M$ does not have $B$ as a direct summand, $\im\al\sbe \gR B$ 
  for any $\al:M\to B$. Hence $\ga_-$ is bijective. The assertion about $\ga_+$ is just dual.
  
  (2) is an obvious consequence of (1).
 \end{proof}
 
 \begin{theorem}\label{st3} 
 Let $A$ be a non-hereditary order, $B$ be a bijective $A$-lattice, $\lst Pn$ be a complete list of non-isomorphic principal 
 $A$-modules, $\lst In$ be a complete list of non-isomorphic coprincipal $A$-lattices and $A'=A^-(B)$. Set
 $\dP^B=\setsuch{\io^B_{P_i}}{1\le i\le n}$ and $\dI^B=\setsuch{\io^B_{I_i}}{1\le i\le n}$. Then $\ula{A\lat}\simeq A'\lat^{\dP_B}$ and
 $\ola{A\lat}\simeq A'\lat^{\dI_B}$.
 \end{theorem}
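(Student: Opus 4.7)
The plan is to define $G: A'\lat\to\ula{A\lat}$ as the composition of the inclusion $A'\lat\hookrightarrow A\lat$ with the canonical quotient, then show $G$ annihilates the ideal $\gnr{\dP^B}$ and verify that the induced functor $\bar G: A'\lat^{\dP^B}\to\ula{A\lat}$ is an equivalence of categories. The annihilation is immediate for each generator $\io^B_{P_i}$: if $P_i\not\Sb B$ then $\io^B_{P_i}=1_{P_i}$ factors through the $A$-projective $P_i$; if $P_i=B_k$ belongs to a $B$-link $\{B_1,\dots,B_l\}$ then $\io^B_{P_i}:\gR B_l\hookrightarrow B_1^\gR$ factors through the canonical chain $\gR B_l\hookrightarrow B_l\hookrightarrow\cdots\hookrightarrow B_1\hookrightarrow B_1^\gR$ passing through the $A$-projective $B_l$. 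Essential surjectivity of $\bar G$ comes from Lemma~\ref{bi2}, which decomposes every $A$-lattice as $B'\+M'$ with $B'\Sb B$ projective (hence zero in $\ula{A\lat}$) and $M'\in A'\lat$. Fullness is automatic: any $A$-linear map between $A'$-lattices commutes with the $A'$-action, since it extends uniquely to a $KA$-linear map after tensoring with $K$.

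The main obstacle is faithfulness, which rests on the following generalization of Lemma~\ref{st2} to arbitrary $B$-links: for any $A'$-lattices $M,N$, any $B$-link $\{B_1,\dots,B_l\}$, and any $1\le k\le l$, the inclusions $\gR B_l\hookrightarrow B_k$ and $B_k\hookrightarrow B_1^\gR$ induce bijections
\[
\hom_A(M,\gR B_l)\ito\hom_A(M,B_k) \quad\text{and}\quad \hom_A(B_1^\gR,N)\ito\hom_A(B_k,N).
\]
These follow by iterating Lemma~\ref{st2} applied to each indecomposable bijective $B_j$ individually, since $A^-(B_j)\sbe A'$ makes any $A'$-lattice an $A^-(B_j)$-lattice, and using the identifications $\gR B_j=B_{j+1}$ and $B_j^\gR=B_{j-1}$ inside the link with $\gR B_l$ and $B_1^\gR$ at the extremes. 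Granted this, faithfulness follows: if an $A$-linear $\al:M\to N$ between $A'$-lattices decomposes as $\al=\sum_i\be_i\ga_i$ through indecomposable $A$-projectives $P_{\si(i)}$, then when $P_{\si(i)}\not\Sb B$ the summand $\be_i\ga_i$ lies in $\gnr{1_{P_{\si(i)}}}\sbe\gnr{\dP^B}$ directly, and when $P_{\si(i)}=B_k$ belongs to a $B$-link the two bijections produce unique $A'$-linear $\tilde\ga_i:M\to\gR B_l$ and $\tilde\be_i:B_1^\gR\to N$ satisfying $\be_i\ga_i=\tilde\be_i\circ\io^B_{B_k}\circ\tilde\ga_i$, placing $\be_i\ga_i$ in $\gnr{\io^B_{B_k}}$.

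The costable equivalence $\ola{A\lat}\simeq A'\lat^{\dI^B}$ is obtained by dualizing: one applies the stable statement just proved to the opposite order $A^\op$ with bijective right $A$-lattice $DB$ and transports along the duality $D$, which identifies $\ula{A^\op\lat}$ with $\ola{A\lat}$, swaps principal $A$-modules with coprincipal $A$-lattices, and carries $DB$-links to $B$-links, so that $\dP^{DB}$ on the $A^\op$-side corresponds precisely to $\dI^B$ on the $A$-side.
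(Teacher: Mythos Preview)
Your argument is correct and complete. The paper's own proof is a two-line sketch: it handles indecomposable $B$ directly from Lemma~\ref{st2}, then obtains the general case by induction on the number of non-isomorphic indecomposable summands of $B$, invoking Theorem~\ref{bi6} to track how the $B$-links evolve as one peels off summands one at a time.

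Your approach is organized differently but rests on the same engine. Rather than induct, you construct the comparison functor $A'\lat^{\dP^B}\to\ula{A\lat}$ directly and check essential surjectivity, fullness, and faithfulness. The substantive step---your iterated version of Lemma~\ref{st2} along an entire $B$-link---is exactly what the paper's induction accomplishes implicitly: each pass of the induction applies Lemma~\ref{st2} to one $B_j$ and shortens the link by one, while you simply compose those bijections all at once. What your presentation buys is explicitness: the reader sees the equivalence functor, sees precisely why each generator $\io^B_{P_i}$ dies, and sees the factorization $\be_i\ga_i=\tilde\be_i\circ\io^B_{B_k}\circ\tilde\ga_i$ that gives faithfulness. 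What the paper's induction buys is brevity and a cleaner bookkeeping of how $\dP^B$ transforms under successive rejections (this is where Theorem~\ref{bi6} enters, to identify $M^{+,B}$ with $M^{+,B(1)}$ after rejecting the top of a link). The duality argument for the costable statement is the same in both.
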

 
 \noindent
 Actually, it means that, defining $\ula{A\lat}$ (respectively, $\ola{A\lat}$) we may replace $A$ by $A'$ and, for each $B$-link
 $\lst Bl$, replace in $\dP$ (respectively, in $\dI$) all maps $1_{B_i}\ (1\le i\le l)$ by the embeddings $\gR B_l\to B_1^\gR$.
 
 \begin{proof}
  If $B$ is indecomposable, the assertion follows from Lemma~\ref{st2}. Then the general case is obtained by induction on the number of
  non-isomorphic indecomposable direct summands of $B$ using Theorem~\ref{bi6}.
 \end{proof}
 
 \begin{corol}\label{st4} 
  Let $A$ be a non-hereditary Gorenstein order, $\lst Pn$ be a a complete list of non-isomorphic principal $A$-modules,
  $\io_i$ be the embedding $\gR P_i\to P_i^\gR$, $A'=A^-(A)$. Then $\ula{A\lat}\simeq A'\lat^{\dP'}$, where $\dP'=\set{\lst\io n}$.
 \end{corol}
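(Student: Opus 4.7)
The plan is to specialize Theorem~\ref{st3} to the choice $B=A$. Because $A$ is Gorenstein, Proposition~\ref{bi7}~(7),(8) identifies projective and L-injective lattices, so $A$ itself is a bijective $A$-lattice and its indecomposable direct summands are exactly the principal modules $\lst Pn$. By definition $A^-(A)=A^-=A'$, which is the overring appearing in the statement.

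Next I would identify the $B$-links for this choice of $B$. Recall that a $B$-link is a chain $\set{\lst Bl}$ of indecomposable summands of $B$ linked by radicals, with boundary conditions $\gR B_l\not\Sb B$ and $B_1^\gR\not\Sb B$. Since $A$ is non-hereditary Gorenstein, Lemma~\ref{bi8} asserts that for each principal module $P_i$ neither $\gR P_i$ nor $P_i^\gR$ is projective, i.e.\ neither is a summand of $B=A$. Consequently every $B$-link has length~$1$: a link of length $\ge 2$ would force some $B_i=\gR B_{i-1}$ to be a principal summand of $A$, contradicting Lemma~\ref{bi8}. For the same reason each singleton $\set{P_i}$ \emph{is} a $B$-link, the two boundary conditions being precisely the content of Lemma~\ref{bi8}.

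With the links pinned down, Definition~\ref{link} becomes transparent: for a $B$-link $\set{P_i}$ of length~$1$ one has $P_i^{+,B}=P_i^\gR$ and $P_i^{-,B}=\gR P_i$, so $\io^B_{P_i}$ is the tautological embedding $\gR P_i\hookrightarrow P_i^\gR$, i.e.\ $\io_i$. Hence $\dP^B=\set{\lst\io n}=\dP'$, and Theorem~\ref{st3} yields $\ula{A\lat}\simeq A'\lat^{\dP'}$ at once.

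There is no real obstacle here, since all the work is done by Theorem~\ref{st3} and Lemma~\ref{bi8}; the only thing to verify is that in the Gorenstein, non-hereditary setting the $B$-links for $B=A$ degenerate to singletons, which is an immediate consequence of the non-projectivity of $\gR P_i$ and $P_i^\gR$.
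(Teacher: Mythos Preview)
Your proof is correct and follows essentially the same route as the paper: the paper's proof reads ``It follows from Theorem~\ref{st3} and Lemma~\ref{bi9},'' and since Corollary~\ref{bi9} is itself an immediate consequence of Lemma~\ref{bi8}, your direct appeal to Lemma~\ref{bi8} to show that all $B$-links for $B=A$ are singletons is exactly the same argument unpacked one level.
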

 \begin{proof}
  It follows from Theorem~\ref{st3} and Lemma~\ref{bi9}.
 \end{proof}

 \section{Almost split sequences}
 \label{ar} 
  
  \noindent
   Recall some definitions and results (cf.~\cite{ar3}).
 Let $A$ be an order, $\al:N\to M$ and $\be:M\to N$ are homomorphisms of lattices, where $M$ is indecomposable. 
 
 \begin{defin}\label{ar1} 
  \begin{enumerate}
  \item  $\al$ is called \emph{right \asp} if the following conditions hold:
  	\begin{enumerate}
  	  \item  $\al$ is a non-split epimorphism;
  	  \item  any homomorphism $\xi:X\to M$ which is not a split epimorphism factors through $\al$;
  	  \item  if $\vi:N\to N$ is such that $\al\vi=\al$, then $\vi$ is an isomorphism.
  	\end{enumerate}  
  	\noindent 
  	Note that if (a) and (b) holds, then either (c) holds or $N=N_0\+N_1$, where $N_0\sb\ker\al$ and $\al|_{N_1}$ is right \asp.
  	
  	\medskip
  \item $\be$ is called \emph{left \asp} if the following conditions hold:
  	\begin{enumerate}
  	  \item  $\be$ is a non-split inflation;
  	  \item  any homomorphism $\xi:X\to M$ which is not a split monomorphism factors through $\be$;
  	  \item  if $\vi:N\to N$ is such that $\vi\be=\be$, then $\vi$ is an isomorphism.
  	\end{enumerate}    
  	  	\noindent 
  	Note that If (a) and (b) holds, then either (c) holds or $N=N_0\+N_1$, where $\im\be\sb N_1$ and $\be$ is left \asp\ considered as a
  	map $M\to N_1$.
  	
  	\medskip 	  
  \item  An exact sequence of $A$-lattices $\eps:0\to L\xarr\be N\xarr\al M\to 0$, where $M$ and $L$ are indecomposable, is called an 
  \emph{\asp\ sequence} if the following equivalent conditions hold:
  	\begin{enumerate}
 	 \item $\al$ is right \asp;
 	 \item  $\be$ is left \asp;
 	 \item  for every homomorphism $\xi:X\to M$, which is not a split epimorphism, the exact sequence $\eps\xi$ splits;
 	 \item  for every homomorphism $\eta:L\to X$, which is not a split inflation, the exact sequence $\eta\eps$ splits.
 	 \end{enumerate}
 	 Here $\eps\xi$ (respectively, $\eta\eps$) is the pull-back of the exact sequence $\eps$ along $\xi$ (respectively, the push-down of $\eps$
 	 along $\eta$).
  \end{enumerate}
 \end{defin}
 
 \noindent
 Obviously, a right (or left) \asp\ morphism, if exists, is unique up to isomorphism of $N$. In the same way, an \asp\ sequence with a fixed
 term $M$ (or $L$), if exists, is unique, up to isomorphism of $L$ (respectively, of $M$). Actually, in the category $A\lat$ it exists for
 every non-projective indecomposable $M$, as well as for every non-L-injective indecomposable $L$. It can be proved following literally to
 \cite{ar}. We recall the main steps.
 
 The functor $\tau_A=D\Om\tr:\ula{A\lat}\to\ola{A\lat}$ is called the \emph{\art transpose}. Just as in \cite[Proposition~1.1]{ar}, one proves that 
 \[
  \ext^1_A(N,\tau_A M)\simeq \widehat{\uhom_A(M,N)}.
 \]
 Let $M$ be an indecomposable non-projective $A$-lattice. Then the ring $\La=\uhom_A(M,M)$ is local. Dually, $\widehat{\uhom_A(M,M)}$ has a 
 unique minimal $\La$-submodule $U$. If $u$ is a non-zero element of $U$, then $u(\la)=0$ for every non-invertible $\la\in\La$. If $\xi:X\to M$ is 
 not a split epimorphism, then $\xi\vi$ is not invertible for every $\vi:M\to X$, whence $u\xi(\vi)=u(\xi\vi)=0$, i.e. $u\xi=0$. Therefore, the same holds
 for the corresponding extension $\eps\in\ext^1_A(M,\tau_A M)$, thus the extension
 \begin{equation}\label{ear1} 
   \eps: \ 0\to \tau_AM\xarr\be  E\xarr\al M\to 0,
 \end{equation} 
 is an \asp\ sequence. Note that, if $0\to L\to N\to M\to 0$ is an \asp\ sequence, so is its dual $0\to DM\to DN\to DL\to 0$. Therefore, if $L=\tau_AM$,
 then $DM\simeq \tau_ADL$ and $M\simeq D\tau_ADL\simeq \Om\tr DM$. So the functor $\tau_A$ has a quasi-inverse 
 $\tau^{-1}_A=\Om\tr D:\ola{A\lat}\to\ula{A\lat}$.
  
 Let $M=\bop_jM_j$ and $N=\bop_iN_i$, where $M_j$ and $N_i$ are indecomposable $A$-lattices. Denote by $\Rad_A(M,N)$ the set of
 homomorphisms $\vi:M\to N$ such that all components $\vi_{ij}:M_j\to N_i$ are non-isomorphisms. Obviously, we obtain an ideal of the
 category $A\lat$ called its \emph{radical}. So we can define its degrees $\Rad^n_A \ (n\in\mN)$ and $\Rad^\8_A=\bap_{n=1}^\8\Rad^n_A$.
 The homomorphisms from $\Rad_A(M,N)\setminus\Rad^2_A(M,N)$ are called \emph{irreducible}.
 The quotient ${_NV_M}=\Rad_A(M,N)/\Rad^2_A(M,N)$ is a finite dimensional vector space over the residue field $\aK$. In particular, 
 if the lattice $M$ is indecomposable, $F_M={_MV_M}$ is a skewfield, and for any $N$ both ${_NV_M}$ and ${_MV_N}$ are finite dimensional 
 vector spaces over $F_M$ (respectively, right and left). Let $A\ind$ be the set of isomorphisms classes of indecomposable $A$-lattices. The set
 $\setsuch{F_M,\,{_NV_M}}{M,N\in A\ind}$ is called the \emph{\art species} of the order $A$ and denoted by $\AR_A$. It is indeed a 
 $\aK$-species in the sense of \cite{dr}, since all $F_M$ are skewfields and ${_NV_M}$ is an $F_N\mt F_M$-bimodule. If the 
 residue field $\aK$ is algebraically closed, so $F_M=\aK$ for any indecomposable $M$, this species is usually written as a quiver 
 whose vertices are $M\in A\ind$ and there are $d_{NM}$ arrows from $M$ to $N$, where $d_{NM}=\dim_\aK({_NV_M})$. It is called 
 the \emph{\art quiver} of $A$. Obviously, the species of $A\op\lat$ is $(F\op_M,{_MV\op_N})$, where ${_MV\op_N}={_NV_M}$. So in the
 \art quiver one only has to revert all arrows.
 
 If the lattice $M$ is indecomposable and non-projective, the definition of an \asp\ sequence shows that every homomorphism from $\Rad_A(N,M)$, 
 as well as every homomorphism from $\Rad_A(\tau_AM,N)$ factors through the term $E$ of the sequence \eqref{ear1}. Hence, if
 $E=\bop_{i=1}^rE_i$ with indecomposable $E_i$, then ${_MV_N}=0={_NV_{\tau_AM}}$ if $N\niso E_i$ for all $1\le i\le r$, while
 both ${_MV_{E_i}}$ and ${_{E_i}V_{\tau_AM}}$ are non-zero. In the case of the \art quiver, there are only arrows from each of $E_i$ to $M$
 and from $\tau_AM$ to each of $E_i$. Note also that if $\al_i$ are the components of $\al$ and $\be_i$ are the components of $\be$
 in the sequence \ref{ear1}, then $\sum_{i=1}^r\al_i\be_i=0$. 
 
 If $P$ is principal, the image of any homomorphism $N\to P$, which is not a split epimorphism, belongs to $\gR P$.
 Therefore, if $\gR P=\bop_{i=1}^rE_i$ with indecomposable $E_i$, the only non-zero spaces ${_PV_N}$ are ${_PV_{E_i}}$. Dually,
 if $I$ is coprincipal and $I^\gR=\bop_{i=1}^rE_i$ with indecomposable $E_i$, the only non-zero spaces ${_NV_I}$ 
 are ${_{E_i}V_I}$.

 If the lattices $M$ and $N$ are not projective, every homomorphism from $\dP(M,N)$ is in $\Rad^2_A(M,N)$. So we can consider the 
 \emph{stable \art species} (or the \emph{stable \art quiver}) $\ula{\AR}_A$ whose objects are non-projective indecomposable lattices and the 
 bimodules ${_MV_N}$ are the same as in $\AR_A$. Dually, the \emph{costable \art species} (or \emph{costable \art quiver}) $\ola{\AR}_A$ 
 is defined, consisting of non-L-injective indecomposable lattices. The functor $\tau_A$ induces the \emph{\art translation} $\ula{AR}_A\ito\ola{AR}_A$.
 Again, in Gorenstein case stable and costable species (or quivers) coincide.
 
 We will use the following fact about irreducible morphisms between indecomposable lattices. Perhaps, it is known, though we have not found it
 in the literature. 
 
 \begin{prop}\label{ar2} 
  Let $M,N$ be indecomposable lattices, $\al:N\to M$ is an irreducible morphism. There are two possibilities:
  \begin{enumerate}
  \item $\al$ is an isomorphism of $N$ onto a direct summand of a maximal submodule of $M$.
  \item $\al$ is an epimorphism and there is a submodule $L\sb N$ such that $N/L$ is a lattice, $L+\ker\al=N$ and
  $L\cap\ker\al$ is an L-irreducible lattice.
  \end{enumerate}
 \end{prop}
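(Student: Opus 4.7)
The plan is to follow the standard Auslander--Reiten dichotomy argument inside the exact category $A\lat$. In a Krull--Schmidt category an irreducible morphism $\al = \be\circ\ga$ must have $\ga$ a split monomorphism or $\be$ a split epimorphism. Applying this to the canonical factorization $N\twoheadrightarrow\im\al\hookrightarrow M$ (a valid factorization in $A\lat$ because $\im\al$ is a sublattice of the lattice $M$), and noting that $\al$ itself is not an isomorphism (it lies in the radical), I conclude that $\al$ is either a monomorphism or an epimorphism.

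In the monomorphism case I would, by noetherianness, pick a maximal submodule $M_0\sb M$ containing $\im\al$ and factor $\al$ through the inclusion $M_0\hookrightarrow M$. This inclusion cannot be a split epimorphism (a mono that is a split epi is an isomorphism, but $M_0\ne M$), so irreducibility forces the induced morphism $N\to M_0$ to be a split monomorphism. That yields $M_0\cong\al(N)\oplus X$ for some sublattice $X\sb M_0$, exhibiting $\al(N)$ as a direct summand of a maximal submodule of $M$; this is conclusion~(1).

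For the epimorphism case, set $\gK=\ker\al$, a nonzero sublattice of $N$. I would construct the required $L$ by first choosing an L-irreducible sublattice $\gK_0\sbe\gK$ with $\gK/\gK_0$ a lattice: take any simple $KA$-submodule $U\sbe K\gK$ and let $\gK_0=\gK\cap U$, so that $\gK/\gK_0$ embeds into the torsion-free quotient $K\gK/U$ and $\gK_0$ is L-irreducible with $K\gK_0=U$. Since $\gK_0$ is pure in $N$ as well, $N/\gK_0$ is also a lattice, and $\al$ factors in $A\lat$ as $\al=\al'\circ q$, where $q\colon N\to N/\gK_0$ is the canonical surjection and $\al'\colon N/\gK_0\to M$ is induced by $\al$. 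Since $q$ is a nonisomorphic surjection it is not a split monomorphism, so the irreducibility of $\al$ forces $\al'$ to be a split epimorphism. Choosing a section $s\colon M\to N/\gK_0$ of $\al'$ and defining $L=q^{-1}(s(M))$, a short verification gives $L\cap\gK=\gK_0$, $L+\gK=N$, and $N/L\cong\gK/\gK_0$, which is conclusion~(2).

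The only subtle point is recognising that an arbitrary L-irreducible pure sublattice $\gK_0\sbe\gK$ already suffices: the irreducibility of $\al$, applied to the factorization $\al=\al'\circ q$, automatically makes the induced sequence $0\to\gK/\gK_0\to N/\gK_0\to M\to 0$ split. Once that observation is in place, both conclusions reduce to routine bookkeeping with sublattices and direct summands.
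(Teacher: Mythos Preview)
Your proof is correct and follows essentially the same route as the paper: factor $\al$ through its image to obtain the monomorphism/epimorphism dichotomy, handle the monomorphism case by factoring through a maximal submodule of $M$, and handle the epimorphism case by passing to $N/\gK_0$ for an L\nobreakdash-irreducible pure sublattice $\gK_0\sbe\ker\al$ and using irreducibility to split the induced map to $M$. The only cosmetic difference is that the paper singles out the case where $\ker\al$ itself is L\nobreakdash-irreducible (setting $L=\ker\al$), whereas your uniform construction produces $L=N$ in that edge case; both run into the same tension with the strict inclusion $L\sb N$ in the statement, so this is a wrinkle in the formulation rather than in your argument.
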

 \begin{proof}
  Let $M'=\im\al$, $\io$ be the embedding $M'\to M$ and $\pi$ be the projection $N\to M'$. If $M=\bop_{i=1}^mM_i$, where $M_i$ are
  indecomposable, $\io_i$ and $\pi_i$ are the components of $\io$ and $\pi$ with respect to this decomposition. Then 
  $\al=\sum_{i=1}^m\io_i\pi_i$. As $\al$ is irreducible, at least one of $\io_i$ or $\pi_i$ must be invertible. Suppose that one of $\io_i$ is
  invertible. Then $m=1$ and $\al$ is an epimorphism. If $\ker\al$ is L-irreducible, we can set $L=\ker\al$. If $\ker\al$ is not L-irreducible,
  it contains an L-irreducible sublattice $S$ such that $N/S$ is a lattice (take the intersection of $\ker\al$ with a simple $KA$-submodule
  in $K\ker\al$). Then $\al$ factors through the map $\bar\pi:N/S\to N/\ker\al\simeq M$. Therefore, $\bar\pi$ must be a split epimorphism,
  so $N/S\simeq N/\ker\al\+N/L$ for some $L\sp S$ (in particular, $N/L$ is a lattice). It actually means that $L+\ker\al=N$ and $L\cap\ker\al=S$, 
  which gives the possibility (2).
  
 If one of $\pi_i$ is invertible, then all other $\pi_j=0$ and $\al$ is a monomorphism. If $M'$ is a maximal submodule of $M$ containing $\im\al$,
 then $\al$ factors through the embedding $\im\al\to M'$, hence the latter must split. It gives the possibility (1).
 \end{proof}
 
 \noindent
 We study the behaviour of these constructions under rejection of bijective lattices. First, a simple observation.
 
 \begin{prop}\label{ar3} 
 Let $B$ be a bijective $A$-lattice, $A'=A^-(B)$, $M$ be an $A'$-lattice.
 \begin{enumerate}
 \item  If $\al:N\to M$ is right \asp\ in $A'\lat$, it is so in $A\lat$. 
 \item  If $\be:M\to N$ is left \asp\ in $A'\lat$, it is so in $A\lat$. 
 \item  If $0\to L\to M\to N\to 0$ is an \asp\ sequence in $A'\lat$, it is so in $A\lat$.  
 \end{enumerate}
  \end{prop}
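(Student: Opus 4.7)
The strategy is to verify the defining conditions of Definition~\ref{ar1} directly in $A\lat$, using two facts. First, by the rejection lemma~\ref{bi2}, every $A$-lattice $X$ decomposes as $X = B_0 \oplus X'$ with $B_0 \Sb B$ and $X'$ an $A'$-lattice. Second, any $A$-linear map between two $A'$-lattices is automatically $A'$-linear (it extends uniquely to a $KA$-linear map on the rationalizations), so $\hom_{A'}(M,N) = \hom_A(M,N)$ whenever $M,N \in A'\lat$; the essential content is therefore what happens on the $B_0$-component of the source.

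For~(1), let $\al: N \to M$ be right \asp\ in $A'\lat$; we verify conditions (a)--(c) of Definition~\ref{ar1}(1) in $A\lat$. The morphism $\al$ remains a non-split epimorphism in $A\lat$, since any $A$-linear section $M \to N$ would automatically be $A'$-linear. For~(b), take $\xi: X \to M$ in $A\lat$ which is not a split epimorphism, write $X = B_0 \oplus X'$, and split $\xi = (\xi_1, \xi_2)$. If $\xi_2: X' \to M$ were a split epimorphism in $A'\lat$ with section $\tau$, then $(0,\tau): M \to B_0 \oplus X'$ would split $\xi$---a contradiction. Hence $\xi_2 = \al \zeta_2$ for some $\zeta_2: X' \to N$ by the $A'$-property of $\al$. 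For the remaining component $\xi_1: B_0 \to M$, we use that $B_0$ is $A$-projective (as a direct summand of $mB$, with $B$ bijective) to lift through the epimorphism $\al$ to some $\zeta_1: B_0 \to N$. Then $(\zeta_1, \zeta_2): X \to N$ is the required factorization. For~(c), any $\vi \in \End_A(N)$ is $A'$-linear, so the condition transfers from the $A'$-version.

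Statement~(2) follows either by applying the duality $D$ to~(1), using that $DB$ is a bijective right $A$-lattice (Remark~\ref{bi2a}), or by a direct dual argument replacing $A$-projectivity of $B_0$ with $A$-L-injectivity. For~(3), indecomposable $A'$-lattices remain indecomposable in $A\lat$ (an $A$-linear idempotent on an $A'$-lattice is $A'$-linear by the observation above), so combining this with~(1) the sequence is \asp\ in $A\lat$ by the characterization~\ref{ar1}(3)(a). The only delicate step in the whole argument is the handling of the $B_0$-component in~(1)(b): the projectivity invoked there is that of $B_0$ as an $A$-module (not as an $A'$-module---indeed $B_0$ is typically not even an $A'$-module), and this is precisely why the bijectivity hypothesis on $B$ is needed.
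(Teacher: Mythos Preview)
Your proof is correct and follows the same approach as the paper: use the rejection lemma to split off the $B$-part of an arbitrary test lattice $X$, factor the $A'$-piece through $\al$ by the $A'$-\asp\ property, and factor the $B$-piece through $\al$ by projectivity. The paper's version is terser---it essentially reduces to indecomposable $X$, where the dichotomy ``$X\Sb B$ (hence projective) or $X$ is an $A'$-lattice'' is immediate---while you write out the decomposition $X=B_0\oplus X'$ and the verification that $\xi_2$ is not a split epimorphism explicitly; but the substance is identical.
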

 \begin{proof}
  (1) Let $\xi\in\hom_A(X,M)$ be not a split epimorphism. If $X\not\Sb B$, it is an $A'$-lattice, so $\xi$ factors through $\al$. If $X\Sb B$, it is
  projective, so $\xi$ also factors through $\al$.
  
  (2) by duality.
  
  (3) follows from (1) or (2).
 \end{proof}
 
 \noindent
 The following theorem describes the ``\art behaviour'' of new projective modules over the order $A^-(B)$.
 
 \begin{theorem}\label{ar4} 
  Let $B$ be an indecomposable bijective $A$-lattice, $A'=A^-(B)$. Suppose that $B^\gR$ is not projective over $A$ (equivalently, $\gR B$
  is not L-injective over $A$). 
  \begin{enumerate}
  \item  If $B^\gR$ decomposes: $B^\gR=M_1\+M_2$, there are \asp\ sequences
  \begin{align*}
    & 0\to \gR M_1\to B\to M_2\to 0,\\
    & 0\to \gR M_2\to B\to M_1\to 0.
  \end{align*}
  In particular, $\tau_AM_1=\gR M_2$ and $\tau_AM_2=\gR M_1$.
  
  \smallskip
  \item If $B^\gR$ is indecomposable, then $\tau_AB^\gR=\gR B$, \,$B^\gR$ has a maximal submodule $X\ne B$ and there is an 
  \asp\ sequence
  \begin{equation}\label{ear2} 
     0\to \gR B\to B\+X \xarr\al B^\gR\to 0.
  \end{equation}
  In particular, $\tau_AB^\gR=\gR B$.
  \end{enumerate}
 \end{theorem}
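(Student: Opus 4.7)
The proof splits along the dichotomy of Lemma \ref{bi4}(3): $B^\gR$ is either $M_1\oplus M_2$ or indecomposable. In both cases the verification of the right almost split property of the stated surjection reduces to a common dichotomy on the source $Y$: either $Y\simeq B$ (handled by the projectivity of $B$), or $Y$ is an $A'$-lattice with $A'=A^-(B)$, in which case the $A'$-projectivity of $B^\gR=A'B$ (Lemma \ref{bi5}) rules out any $A$-surjection $Y\to$(summand of $B^\gR$) that is not already split over $A$.

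\emph{Case 1.} By the analysis in the proof of Lemma \ref{bi4}(1), $\gR M_i\oplus M_j$ is a maximal submodule of $B^\gR$ distinct from $B$, so the indecomposability of $B$ forces $B+M_i=B^\gR$ and $B\cap M_i=\gR M_i$. This gives the short exact sequences $0\to\gR M_i\to B\to M_j\to 0$, and since $M_j$ has simple top isomorphic to $B/\gR B$ (again from the proof of \ref{bi4}(1)), the map $B\to M_j$ is a projective cover, hence right minimal. For the factorization property, let $\xi:Y\to M_j$ be a non-split epi with $Y$ indecomposable. If $Y\simeq B$, lift $\xi$ by projectivity. Otherwise $Y$ is an $A'$-lattice by Lemma \ref{bi2}, and $M_j$ is a direct summand of the $A'$-projective $B^\gR$; a surjection from an $A'$-lattice onto an $A'$-projective splits over $A'$, hence over $A$, contradicting non-splitness of $\xi$. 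Thus $\im\xi\subsetneq M_j$, so $\im\xi\subseteq\gR M_j\subseteq\gR B\subseteq B$, and $\xi$ factors through $B\to M_j$ via the inclusion $\gR M_j\hookrightarrow B$. Uniqueness of almost split sequences then gives indecomposability of $\gR M_i$ and the identification $\tau_A M_j=\gR M_i$.

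\emph{Case 2.} First, $A$ is non-maximal, for otherwise Lemma \ref{bi3}(4) forces $B^\gR\simeq B$, contradicting $B^\gR$ non-projective; in particular $B^\gR\not\simeq B$. From $\gR B\subseteq\gR B^\gR\subseteq B\subsetneq B^\gR$ with $B^\gR/B$ simple, the only possibilities are $\gR B^\gR=B$ or $\gR B^\gR=\gR B$. The former would make $B\twoheadrightarrow B^\gR$ the projective cover of $B^\gR$, forcing $B\simeq B^\gR$ by comparison of widths---again contradiction. Hence $\gR B^\gR=\gR B$ and $B^\gR/\gR B$ has length two, so $B^\gR$ admits a maximal submodule $X\neq B$. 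The critical point is that $X$ must be chosen isomorphic to $B^\gR$ (not to $B$); a natural choice is $X=\gR' B^\gR$ where $\gR'=\rad A'$, realized as the image of a non-invertible element of the local ring $\End_A(B^\gR)=\End_{A'}(B^\gR)$. With this choice one verifies $X\cap B=\gR B$ and $X+B=B^\gR$, producing the exact sequence $0\to\gR B\to B\oplus X\to B^\gR\to 0$ with diagonal inclusion and difference map. The almost split verification then proceeds exactly as in Case 1, with the iso $X\simeq B^\gR$ ensuring that $\Hom_A(Y,X)$ is rich enough to supply any required factorization component.

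The chief obstacle is precisely this sharp choice of $X$ in Case 2: choosing $X$ to be a maximal submodule isomorphic to $B$ (which is typically also available) makes $B\oplus X$ projective and produces merely the projective cover of $B^\gR$, which is non-split but is \emph{not} almost split. The example $A=\Mk[[t^2,t^3]]$ with $B=A$, $B^\gR=\Mk[[t]]$ illustrates this cleanly: the choice $X=A(1+t)\simeq B$ fails to factor $\xi=$\,multiplication-by-$t$, whereas $X=t\Mk[[t]]\simeq B^\gR$ succeeds and gives the genuine AR sequence.
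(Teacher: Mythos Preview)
Your Case~1 is correct and gives a clean direct verification, genuinely different from the paper's route. The paper instead invokes the abstract existence of the \asp\ sequence ending in a summand $M$ of $B^\gR$, shows via Proposition~\ref{ar3} that $\tau_AM$ must be L-injective over $A'$ but not over $A$ (hence a summand of $\gR B$), and then identifies the middle term from the irreducibility of $B\hookrightarrow M$ together with a width count. Your approach avoids the existence theorem and the $D\Om\tr$ machinery; the paper's approach avoids your explicit factorization check.

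Case~2, however, contains a real error: your claim that $X$ must be isomorphic to $B^\gR$ is false, and the paper's Remark~\ref{ar5}(1) is not vacuous. Take $R=\mR[[t]]$ and $A=\mR[[t]]+t\,\mC[[t]]\subset\tilde A=\mC[[t]]$: a local Gorenstein $R$-order with $B=A$, $B^\gR=\tilde A$, $\gR B=t\tilde A$, and $\ell_A(B^\gR/\gR B)=2$. Every maximal $A$-submodule of $\tilde A$ is $vA$ for some $v\in\mC^*$, hence isomorphic to $B$; there is \emph{no} maximal submodule isomorphic to $B^\gR$, and your ``natural choice'' $\gR'B^\gR=t\tilde A=\gR B$ is not maximal over $A$ at all. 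The genuine \asp\ sequence here is $0\to t\tilde A\to A\+iA\to\tilde A\to0$ with middle term isomorphic to $2B$: it is almost split because every non-unit of $\End_A\tilde A=\tilde A$ lies in $t\tilde A\subset A$ and so factors through the first summand. Together with your cusp example, this shows that neither ``$X\simeq B$'' nor ``$X\simeq B^\gR$'' works as a uniform prescription.

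Your underlying idea is salvageable: the correct uniform choice is any maximal $A$-submodule $X\ne B$ with $X\supseteq\gR'B^\gR$. Such $X$ exists (if $\gR'B^\gR\supsetneq\gR B$ it is itself $A$-maximal and $\ne B$; otherwise any maximal $X\ne B$ works). Then for an indecomposable $A'$-lattice $Y$ and $\xi:Y\to B^\gR$ not a split epimorphism, $\xi$ is automatically $A'$-linear and $B^\gR$ is $A'$-projective, so $\im\xi\subseteq\gR'B^\gR\subseteq X$ and $\xi$ factors through the $X$-component. But as written, your Case~2 is incorrect.
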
  
 \begin{proof}
  $B^\gR$ is projective and $\gR B$ is L-injective over $A'$ by Lemma~\ref{bi5}. Let $M$ be a direct summand of $B^\gR$, $N=\tau_AM$ and
  $0\to  N\to E\to M\to 0$ be an \asp\ sequence in $A\lat$. If $N$ were not L-injective as $A'$-lattice, there were an \asp\ sequence
  $0\to N\to E'\to M'\to 0$ in $A'\lat$. By Proposition~\ref{ar3}, it were also an \asp\ sequence in $A\lat$, whence
  $M'\simeq M$, which is impossible, since $M$ is projective over $A'$. Thus $\tau_AM$ is L-injective as $A'$-lattice, but not as $A$-lattice.
  Therefore, it is a direct summand of $\gR B$. In particular, if $B^\gR$ is indecomposable, $\tau_AB^\gR=\gR B$. 
  
   There is an irreducible morphism $B\to M$, hence $B$ must be a direct summand of $E$, so $E=B\+X$. If $B^\gR=M_1\+M_2$, there is an
   exact sequence $0\to \gR M_1\to B\to M_2\to 0$. As $KB\simeq KM_1\+KM_2$, $X=0$. If $B$ is indecomposable, $KX\simeq KB$. Hence
   Proposition~\ref{ar2} implies that in the \asp\ sequence \eqref{ear2} the restriction of $\al$ on $X$ is an isomorphism onto a maximal 
   submodule of $B^\gR$ which cannot coincide with $B$. 
 \end{proof}
 
 \begin{remk}\label{ar5} 
 \begin{enumerate}
 \item   It can happen that in case (1) $M_1\simeq M_2$\, and in case (2) $X\simeq B$. If $X\niso B$, then it is an $A'$-lattice and $X=\gR'B^\gR$,
  where $\gR'=\rad A'$. If $X\simeq B$, then $\gR'B^\gR=\gR B^\gR$.
 \item  By Lemma~\ref{bi8}, the condition ``$B^\gR$ is not projective'' always holds if $A$ is connected, Gorenstein and non-hereditary.
 \end{enumerate}
 \end{remk}

 \section{Gorenstein and Frobenius cases}
 \label{go}
 
 If the order $A$ is Gorenstein, the functor $\vee:M\mps M\ch=\hom_A(M,A)$ is an exact duality $A\lat\to A\op\lat$. Combining it with the duality
 $D:A\op\lat\to A\lat$, we obtain the \emph{Nakayama equivalence} $\kN=D\vee:A\lat\to A\lat$. It maps projective modules to
 projective, thus can also be considered as the functor on stable categories $\ula{A\lat}\to\ula{A\lat}$. The following result is an analogue
 of \cite[Proposition~IV.3.6]{ar3}.
 
 \begin{prop}\label{go1} 
  If the order $A$ is Gorenstein, the functors $\tau_A,\,\Om\kN$ and $\kN\Om$ are isomorphic.
 \end{prop}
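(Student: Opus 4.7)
The plan is to split the statement into two separate claims: (i) $\kN\Om\simeq\Om\kN$ on $\ula{A\lat}$, and (ii) $\kN\Om\simeq\tau_A=D\Om\tr$. Both rest on the observation that, for a Gorenstein order $A$, the Hom-dual $\vee:M\mps M\ch=\hom_A(M,A)$ is an \emph{exact} contravariant duality $A\lat\to A\op\lat$. Indeed, $A$ is L-injective by Proposition~\ref{bi7}, so $\ext^1_A(M,A)=0$ for every lattice $M$ by Proposition~\ref{du2}, and hence $\vee$ sends short exact sequences of lattices to short exact sequences. Composing with $D$, the Nakayama equivalence $\kN=D\vee$ is a covariant exact self-equivalence of $A\lat$ that preserves projectives, and so induces a self-equivalence of $\ula{A\lat}$.

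For (i), I would apply $\kN$ to the short exact sequence $0\to\Om M\to P\to M\to 0$ arising from a projective cover of $M$. Exactness of $\kN$ and projectivity of $\kN P$ yield a sequence $0\to\kN\Om M\to\kN P\to\kN M\to 0$ exhibiting $\kN\Om M$ as a syzygy of $\kN M$. Hence $\Om\kN M\simeq\kN\Om M$ in $\ula{A\lat}$, naturally in $M$, giving $\Om\kN\simeq\kN\Om$.

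For (ii), fix a minimal projective presentation $P_1\xarr\psi P_0\xarr\vi M\to 0$, so that $\Om M=\im\psi$. Applying $\vee$ and using $\ext^1_A(M,A)=0$, the induced sequence
\[
 0\to M\ch\to P_0\ch\xarr{\psi\ch}P_1\ch\to\tr M\to 0
\]
is exact, while the same vanishing applied to $0\to\Om M\to P_0\to M\to 0$ yields $(\Om M)\ch\simeq P_0\ch/M\ch=\im\psi\ch$. Since $\im\psi\ch=\ker(P_1\ch\to\tr M)$ represents $\Om\tr M$ in $\ula{A\op\lat}$, we obtain $(\Om M)\ch\simeq\Om\tr M$ there. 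Dualising by $D$ then gives $\kN\Om M=D((\Om M)\ch)\simeq D\Om\tr M=\tau_AM$.

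The main obstacle is not computational but organisational: one must verify that the identification in (ii) is natural in $M$. Because $\tr$, $\Om$ and $\Om\tr$ are only well defined on the stable category (minimal presentations are unique only up to isomorphism, and lifts of a morphism along them are unique only up to homotopy by a projective), naturality is checked by the usual argument — lift a given morphism $M\to M'$ to maps between chosen presentations and observe that any two such lifts differ by a map factoring through a projective, so the induced identifications agree in $\ula{A\lat}$. This is parallel to the corresponding step in \cite[Proposition~IV.3.6]{ar3}.
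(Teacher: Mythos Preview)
Your proposal is correct and follows essentially the same route as the paper: both arguments hinge on the exactness of $\vee$ in the Gorenstein case, identify the middle image $\im\psi\ch$ simultaneously with $\Om\tr M$ and with $(\Om M)\ch$ (equivalently, after applying $D$, with $\Om\kN M$), and then observe that $\kN$ commutes with $\Om$ because it is exact and preserves projectives. Your treatment is in fact a bit more explicit about naturality than the paper's, which simply asserts that the construction is functorial.
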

 \begin{proof}
  Let $M$ be a non-projective $A$-lattice.
  Consider an exact sequence $$0\to N\xarr\al P_1\xarr\be P_0\xarr\ga M\to 0,$$ where $P_1\xarr\be P_0\xarr\ga M\to 0$ is a minimal
  projective presentation of $M$. It gives the exact sequence 
  \[
   0\to M\ch \xarr{\ga\ch} P_0\ch \xarr{\be\ch} P_1\ch \xarr{\al\ch} N\ch\to 0.
  \]
  Thus $N\ch\simeq\tr M$ and $\Om\tr M\simeq\im\be\ch$. Now the exact sequence 
  \[
   0\to D(\im\be\ch) \to P_0^{\vee\vee} \to DM\ch \to0 
  \]
  shows that $\tau_AM\simeq D(\im\be\ch)\simeq \Om\kN M$. One easily sees that this construction is functorial in $M$, so it gives an isomorphism
  $\tau_A\simeq \Om\kN$. Since $\kN$ is exact and maps projective modules to projective, it commutes with $\Om$, i.e. $\Om\kN\simeq\kN\Om$.
 \end{proof}
 
 Let $A\simeq\bop_{i=1}^sm_iP_i$, where $\lst Ps$ are pairwise non-isomorphic principal left $A$-modules. Then also $A\simeq\bop_{i=1}^sm_iP\ch_i$
 as right $A$-module, $A\simeq\bop_{i=1}^sm_iDP\ch_i$ as left $A$-module, and $\lst{DP\ch}s$ are all pairwise non-isomorphic coprincipal
 left $A$-modules. Therefore, $A$ is Gorenstein \iff  there is a permutation $\nu$ such that $P_i\simeq DP\ch_{\nu i}$ for all $i=1,2,\dots,s$.
 The permutation $\nu$ is called the \emph{Nakayama permutation}.
 
 \begin{defin}\label{go2} 
  An order $A$ is called \emph{Frobenius} if $A\simeq DA$ as left $A$-module. It is called \emph{symmetric} if $A\simeq DA$ as $A$-bimodule.
 \end{defin}

 Obviously, $A$ is Frobenius \iff it is Gorenstein and $m_i=m_{\nu i}$ for all $i=1,2,\dots,s$, where $\nu$ is the Nakayama permutation. One
 easily sees that in this case also $A\simeq DA$ as right $A$-module, so the definition of Frobenius orders is left-right symmetric.
 
 \begin{defin}\label{go3}  
  Let $M$ be a left $A$-module, $\si$ be an automorphism of $A$. We denote by ${^\si\!M}$ the left $A$-module such that it coincides with 
  $M$ as a group, but, for every $a\in A$ and $x\in M$, the product $ax$ in ${^\si\!M}$ coincides with the product $\si(a)x$ in $M$. Analogously $N^\si$
  is defined for a right $A$-module $N$ and ${^\rho\!M^\si}$ is defined for an $A$-bimodule $M$, where $\rho$ is also an automorphism of $A$. 
  If $\rho$ or $\si$ are identity, it is omitted and we write, respectively, $M^\si$ or ${^\rho\!M}$.
 \end{defin}
 
 One easily sees that the maps $x\mps\rho^{-1}(x)$ and $x\mps\si^{-1}(x)$ give isomorphisms of $A$-bimodules, respectively, 
 ${^\rho\!A^\si}\simeq A^{\rho^{-1}\si}$ and ${^\rho\!A^\si}\simeq {^{\si^{-1}\rho}\!A}$.
 
 \begin{prop}\label{go4} 
  $A$ is Frobenius \iff there is an automorphism $\si\in\aut A$ such that $DA\simeq A^\si$ as $A$-bimodule. Moreover, there is an invertible element
  $s\in KA$ such that $\si(a)=s^{-1}as$ for all $A$. 
 \end{prop}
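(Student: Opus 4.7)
The plan is as follows. The implication $(\Leftarrow)$ is immediate: if $DA\simeq A^\si$ as $A$-bimodule then, forgetting the right action, $DA\simeq A$ as left $A$-module (the left structure on $A^\si$ is the standard one), and $A$ is Frobenius. For $(\Rightarrow)$, I fix a left $A$-module isomorphism $\vi:A\ito DA$ and set $\la=\vi(1)\in DA$, so that $\vi(a)=a\la$.

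The key device is to feed $\vi$ itself through the duality functor $D$. This produces a right $A$-module isomorphism $D\vi:DDA\to DA$; composing with the canonical isomorphism $A\ito DDA$ and computing directly with the bimodule conventions $(a\cdot f)(x)=f(xa)$ and $(f\cdot a)(x)=f(ax)$, one finds that the resulting right $A$-iso $A\to DA$ is the map $a\mapsto\la\cdot a$. Thus $\la$ is simultaneously a left and a right cyclic generator of $DA$, and I would define $\si=\vi^{-1}\circ D\vi:A\to A$; equivalently, $\si$ is characterised by $\la\cdot a=\si(a)\la$. As a composition of two $R$-module isomorphisms $\si$ is automatically bijective, and using that left and right actions on $DA$ commute,
\[
\si(ab)\la=\la\cdot(ab)=(\la\cdot a)\cdot b=(\si(a)\la)\cdot b=\si(a)(\la\cdot b)=\si(a)\si(b)\la,
\]
so $\si$ is a ring homomorphism with $\si(1)=1$, hence $\si\in\aut A$. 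Viewed as a map $A^\si\to DA$, the same $\vi$ is then a bimodule isomorphism: left linearity is unchanged, and right linearity reads $\vi(x)\cdot a=(x\la)\cdot a=x(\la\cdot a)=x\si(a)\la=\vi(x\si(a))$, which is exactly the defining relation of the right action on $A^\si$.

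For the ``moreover'' clause I would extend scalars to $K$. Since $KA$ is a semisimple $K$-algebra, hence a product of matrix algebras over skewfields, it is symmetric: the trace pairing on each factor gives a canonical bimodule isomorphism $D_K(KA)\simeq KA$. Combined with the $K$-extension of $DA\simeq A^\si$ this yields $KA\simeq KA^{K\si}$ as $KA$-bimodules. For any such bimodule iso $f$, setting $s=f(1)$ one has $f(a)=af(1)=as$ from left linearity and $f(a)=f(1)K\si(a)=sK\si(a)$ from right linearity, whence $as=sK\si(a)$ for all $a\in KA$, i.e.\ $K\si(a)=s^{-1}as$; the bijectivity of $f$ forces $s$ to be a unit of $KA$, and restriction back to $A$ gives $\si(a)=s^{-1}as$. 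The main obstacle throughout is precisely the bijectivity of $\si$: the defining equation $\la\cdot a=\si(a)\la$ alone only exhibits $\si$ as a ring endomorphism with $K\si$ an automorphism, and the $D\vi$ identification is the trick that makes bijectivity over $R$ automatic, avoiding a more delicate non-degeneracy argument for the form $(a,b)\mapsto\la(ab)$.
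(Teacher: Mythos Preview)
Your proof is correct, and it takes a genuinely different route from the paper's.

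The paper works in the opposite order: it first passes to $K$, extending the left isomorphism $\vi:A\to DA$ to $K\vi$ and using that the semisimple algebra $KA$ is symmetric to fix a $KA$-bimodule isomorphism $\th:KA\ito K(DA)$. Comparing $K\vi$ with $\th$ produces the unit $s\in KA$ via $\vi(x)=\th(xs)$; the paper then observes that $As=\th^{-1}(DA)$ is an $A$-sub-bimodule of $KA$, whence $sA\subseteq As$, i.e.\ $sAs^{-1}\subseteq A$, and concludes (rather tersely) that in fact $sAs^{-1}=A$. Only after this does it set $\si(a)=s^{-1}as$ and verify that $\vi$ becomes a bimodule isomorphism $A^\si\ito DA$.

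Your argument instead builds $\si$ directly over $A$: dualising $\vi$ gives the right $A$-isomorphism $a\mapsto\la\cdot a$, and writing $\si=\vi^{-1}\circ(a\mapsto\la\cdot a)$ exhibits $\si$ as a composition of two $R$-module isomorphisms, so bijectivity is free; multiplicativity then follows from the bimodule axioms. The passage to $K$ is used only at the end to locate $s$. This neatly avoids the step $sAs^{-1}=A$, which the paper asserts without explanation and which is precisely the point where one must rule out that conjugation by $s$ maps $A$ onto a proper suborder. Conversely, the paper's approach has the virtue that $s$ and $\si$ are produced in one stroke, so the ``moreover'' clause is not a separate afterthought. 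Both arguments ultimately rest on the same external input, namely the symmetry of the semisimple algebra $KA$.
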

 \begin{proof}
  Obviously, if such an automorphism exists, $A$ is Frobenius. Suppose that $A$ is Frobenius and let $\vi:A\ito\De$ be an isomorphism of left 
  $A$-modules, 
  where $\De=DA$. It induces an isomorphism of left $KA$-modules $K\vi:KA\ito K\De$. Since $KA$ is semisimple, it is symmetric as $K$-algebra,
  \cite[9.8]{cr} i.e. there is an isomorphisms of $KA$-bimodules $\th:KA\ito K\De$. The composition $\th^{-1}{\cdot} K\vi$ is an automorphism of $KA$ 
  as of left $KA$-module, hence there is an invertible element $s\in KA$ such that $\th^{-1}{\cdot} K\vi(x)=xs$ for every $x\in KA$. In particular,
  $\vi(x)=\th(xs)$ for every $x\in A$, so $\De=\th(As)$. It implies that $As=\th^{-1}(\De)$ is a two-sided $A$-module, so $sA\sbe As$ or
  $sAs^{-1}\sbe A$. Therefore, $sAs^{-1}=A$ and $s^{-1}As=A$. Moreover,
  \[
   \vi(xa)=\th(xas)=\th(xs\cdots^{-1}as)=\th(xs)s^{-1}as=\vi(x)s^{-1}as.
  \]
 Hence $\vi$ is an isomorphism of $A$-bimodules $A^\si\ito\De$, where $\si(a)=s^{-1}as$.
 \end{proof}
 
 \noindent
 One can check that the element $s$ above is defined up to a multiplier of the form $q\la$, where $q$ and $\la$ are invertible element, respectively,
 from $A$ and from the center of $KA$.
 
 \begin{corol}\label{go5} 
  Let $A$ be a Frobenius order, $\si\in\aut A$ be as in Proposition~\ref{go4}, $\kN$ be the Nakayama equivalence. There are functorial
  isomorphisms:
  \begin{itemize}
    \item   $DM\simeq(M\ch)^\si$ for any left $A$-lattice $M$ and $DN\simeq{^{\si^{-1}}\!(N\ch})$ for every right $A$-lattice $N$;
   \item   $\kN M\simeq {^{\si^{-1}}\!M}$ and $\tau_AM\simeq \Om({^{\si^{-1}}\!M})\simeq{^{\si^{-1}}\!(\Om M)}$ for every left $A$-lattice $M$.
    \end{itemize}  
  In particular, if $A$ is symmetric, $\kN\simeq\id$ and $\tau_A\simeq\Om$.
 \end{corol}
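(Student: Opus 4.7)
The plan is to trace the bimodule isomorphism $\omega_A \simeq A^\sigma$ of Proposition~\ref{go4} through the two dualities $D$ and $\vee$. Throughout, I would rely on the bimodule identity ${}^\rho\!A^\sigma \simeq A^{\rho^{-1}\sigma}$ recalled just before Proposition~\ref{go4}, which governs how the twist migrates when one swaps sides.

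First, for a left $A$-lattice $M$,
\[ DM = \hom_A(M,\omega_A) \simeq \hom_A(M,A^\sigma). \]
Since the left $A$-action on $A^\sigma$ is unchanged, this Hom-set is $M^\vee$ as an abelian group; the right $A$-action, however, is inherited from the twisted right action on $A^\sigma$, giving $(f\cdot a)(x) = f(x)\sigma(a)$, which is precisely the twisted right structure on $(M^\vee)^\sigma$. For a right $A$-lattice $N$, the analogous calculation uses the abelian-group bijection $f \mapsto \sigma^{-1}\!\circ\! f$ from $\hom_A(N,A^\sigma)$ to $\hom_A(N,A) = N^\vee$: the right-$A$-linearity condition $f(xa)=f(x)\sigma(a)$ translates, after applying $\sigma^{-1}$, into the usual right-$A$-linearity of $\sigma^{-1}\!\circ\! f$, and the natural left action $bf(x) = bf(x)$ on $A^\sigma$ transports to $b\cdot g = \sigma^{-1}(b)g$ on $N^\vee$. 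This gives $DN \simeq {}^{\sigma^{-1}}\!(N^\vee)$; functoriality in both $M$ and $N$ is immediate from the construction.

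Next I would chain the two dualities. By definition $\kN M = D(M^\vee)$, where $M^\vee$ is a right $A$-lattice; applying the right-module formula above with $N = M^\vee$, together with the reflexivity $(M^\vee)^\vee \simeq M$ of lattices over the Gorenstein order $A$, yields
\[ \kN M \simeq {}^{\sigma^{-1}}\!\bigl((M^\vee)^\vee\bigr) \simeq {}^{\sigma^{-1}}\!M. \]
Then Proposition~\ref{go1} gives $\tau_A M \simeq \Om(\kN M) \simeq \Om({}^{\sigma^{-1}}\!M)$. The twist ${}^{\sigma^{-1}}(-)$ is an exact auto-equivalence of $A\lat$, with quasi-inverse ${}^\sigma(-)$; in particular it sends projectives to projectives (indeed ${}^{\sigma^{-1}}\!A \simeq A^\sigma \simeq A$ as left $A$-modules, by the bimodule identity), so it commutes with $\Om$, giving $\Om({}^{\sigma^{-1}}\!M) \simeq {}^{\sigma^{-1}}\!(\Om M)$. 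Finally, if $A$ is symmetric, then $DA \simeq A$ as a bimodule and one may take $\sigma = \id$; hence $\kN \simeq \id$ and $\tau_A \simeq \Om$ are immediate.

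The one place that demands care is the bookkeeping of twist sides: the appearance of $\sigma$ on the right in the first bullet and of $\sigma^{-1}$ on the left in the second is controlled precisely by the identity ${}^\rho\!A^\sigma \simeq A^{\rho^{-1}\sigma}$, and keeping these conventions straight is the only non-formal step.
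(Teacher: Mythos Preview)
Your argument is correct and is precisely the unwinding that the paper leaves implicit: the paper's own proof reads in full ``\emph{Proof} is obvious.\ \qed'', and what you have written is exactly the bookkeeping of twists through $DA\simeq A^\si$ that justifies this. There is nothing to add.
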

 \noindent\emph{Proof} is obvious. \qed

 \begin{corol}\label{go6} 
 Let $A$ be a Gorenstein order, $\gR=\rad A$, $\lst Ps$ be a complete set of non-isomorphic principal $A$-modules, $\om_i=DP_i\ch$ (then
 $\lst\om s$ is a complete set of non-isomorphic coprincipal $A$-modules). Set $A'=A^-(A)$, $P'_i=P_i^\gR$ and $\om'_i=\gR\om_i$. Then
 $\tau_AP'_i\simeq \om'_{\nu i}$, where $\nu$ is the Nakayama permutation.
 \end{corol}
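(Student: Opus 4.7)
The plan is to reduce directly to Theorem~\ref{ar4}, which computes the \art translate of $B^\gR$ whenever $B$ is an indecomposable bijective lattice. Since $A$ is Gorenstein each principal $P_i$ is bijective, and in the non-hereditary case Lemma~\ref{bi8} guarantees that $P_i^\gR$ is not projective over $A$, so Theorem~\ref{ar4} applies with $B=P_i$. (The hereditary case is degenerate: $\ula{A\lat}$ is trivial, so the assertion carries no content.)

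Next I would split along the dichotomy of Lemma~\ref{bi4}(1). If $P_i^\gR$ is indecomposable, Theorem~\ref{ar4}(2) gives $\tau_A(P_i^\gR)\simeq\gR P_i$ directly. If instead $P_i^\gR=M_1\+M_2$ with $M_1,M_2$ indecomposable, Theorem~\ref{ar4}(1) yields $\tau_A M_1\simeq\gR M_2$ and $\tau_A M_2\simeq\gR M_1$. Interpreting $\tau_A$ summand-wise on the decomposable lattice $P'_i$ and invoking Lemma~\ref{bi4}(1) — which in this very situation asserts $\gR P_i=\gR M_1\+\gR M_2$ — I would conclude $\tau_A(P_i^\gR)\simeq\gR M_2\+\gR M_1\simeq\gR P_i$. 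Thus in both cases $\tau_A P'_i\simeq\gR P_i$.

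To rewrite the answer in the form stated, I would appeal to the characterization of the Nakayama permutation from the paragraph preceding Definition~\ref{go2}: Gorensteinness of $A$ is equivalent to the existence of a permutation $\nu$ with $P_i\simeq DP\ch_{\nu i}=\om_{\nu i}$. Any such $A$-linear isomorphism sends the submodule $\gR P_i$ to $\gR\om_{\nu i}=\om'_{\nu i}$, so $\tau_A P'_i\simeq\gR P_i\simeq\om'_{\nu i}$, as claimed.

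I do not anticipate any serious obstacle: the proof is a direct assembly of Theorem~\ref{ar4} with the principal/coprincipal dictionary provided by the Nakayama permutation. The only mild bookkeeping is in the decomposable case, where one must recognize that the two \asp\ sequences in Theorem~\ref{ar4}(1) recombine — via the radical decomposition $\gR P_i=\gR M_1\+\gR M_2$ of Lemma~\ref{bi4}(1) — into the single isomorphism $\tau_A(M_1\+M_2)\simeq\gR(M_1\+M_2)$.
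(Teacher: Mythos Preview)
Your proposal is correct and follows exactly the route the paper intends: the paper's own proof is the single line ``It follows from Theorem~\ref{ar4},'' and you have simply unpacked that line, supplying the justification via Lemma~\ref{bi8} for the hypothesis of Theorem~\ref{ar4}, treating the decomposable case with Lemma~\ref{bi4}(1), and making the Nakayama-permutation identification $P_i\simeq\om_{\nu i}$ explicit.
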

 \begin{proof}
  It follows from Theorem~\ref{ar4}.
 \end{proof}
    
 \begin{corol}\label{go7} 
 Let $G$ be a finite group, $A$ be a block of the group ring $\mZ_pG$. It is a symmetric $\mZ_p$-order. Set $A'=A^-(A)$. Then, for every 
 non-projective $A$-lattice $M$ (or, the same, for every $A'$-lattice $M$),
 \[
  \hH^n(G,M)\simeq \hH^{n+1}(G,\tau_AM)\simeq \hH^{n-1}(G,\tau_A^{-1}M).
 \] 
 \end{corol}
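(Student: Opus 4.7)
The plan is to combine Corollary~\ref{go5} with the classical dimension-shifting property of Tate cohomology. First I would observe that $\mZ_pG$ is a symmetric $\mZ_p$-order (the trace form $a\mps$ ``coefficient of $1$'' is a non-degenerate symmetric bimodule pairing), and any block of a symmetric order is itself symmetric, so $A$ satisfies the hypotheses of Corollary~\ref{go5}. Consequently, for every non-projective $A$-lattice $M$ there is a functorial isomorphism
\[
 \tau_A M\simeq \Om M \quad \text{in } \ula{A\lat},
\]
and dually $\tau_A^{-1}M\simeq \Om^{-1}M$ in $\ola{A\lat}=\ula{A\lat}$.

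Next I would invoke dimension-shifting for Tate cohomology. If $0\to \Om M\to P\to M\to 0$ is a projective cover sequence, the long exact sequence in Tate cohomology together with $\hH^*(G,P)=0$ yields
\[
 \hH^{n+1}(G,\Om M)\simeq \hH^n(G,M)
\]
for every $n\in\mZ$. A small check is needed here: $\Om M$ is a priori the first syzygy in $A\lat$, whereas $\hH^*(G,-)$ is computed over $\mZ_pG$. But since $A$ is a block, projective $A$-modules are projective $\mZ_pG$-modules (they are direct summands of $\mZ_pG$), so a projective cover of $M$ over $A$ is also a projective cover over $\mZ_pG$ restricted to the block; hence both notions of $\Om M$ agree. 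The same argument applied to an L-injective envelope (which, $A$ being Frobenius, coincides with an injective hull in $A\lat$) gives $\hH^{n-1}(G,\Om^{-1}M)\simeq \hH^n(G,M)$.

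Assembling the two displayed isomorphisms with $\tau_AM\simeq \Om M$ and $\tau_A^{-1}M\simeq \Om^{-1}M$ yields
\[
 \hH^n(G,M)\simeq \hH^{n+1}(G,\tau_AM)\simeq \hH^{n-1}(G,\tau_A^{-1}M),
\]
which is the claim. The parenthetical remark that one may equivalently take $M$ to be an $A'$-lattice is automatic, since $A'=A^-(A)$-lattices are exactly $A$-lattices without projective summands, and deleting a projective summand changes neither $\tau_A M$ in the stable category nor $\hH^*(G,M)$.

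There is no real obstacle here; the only point requiring care is that the syzygy over the block $A$ computes Tate cohomology over the whole group ring, but this follows immediately from the block decomposition of $\mZ_pG$.
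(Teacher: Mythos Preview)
Your argument is correct and follows essentially the same route as the paper: the paper's one-line proof cites Corollary~\ref{go5} (giving $\tau_A\simeq\Om$ for a symmetric order) together with Proposition~\ref{ar3}, and you have unpacked exactly this, adding the standard Tate dimension-shift and the block-compatibility check. The paper's invocation of Proposition~\ref{ar3} seems intended for the parenthetical identification of non-projective $A$-lattices with $A'$-lattices (and the compatibility $\tau_AM=\tau_{A'}M$ noted just after the corollary), which you handle directly; your extra remark that a stable isomorphism suffices because $\hH^*(G,-)$ kills projectives is a useful clarification the paper leaves implicit.
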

 \begin{proof}
  It follows from Corollary~\ref{go5} and Proposition~\ref{ar3}.
 \end{proof}
 
 Note that $\tau_AM=\tau_{A'}M$ if $M$ is not projective over $A'$. Otherwise $\tau_AM$ is given by Corollary~\ref{go6}. In some cases the structure
 of the \art species $AR_{A'}$ can be calculated explicitly. Then it gives the values of the cohomologies. An example, when $G$ is the Kleinian 4-group,
 can be found in \cite{dp}.

\end{document}